\documentclass[12pt]{article}
\usepackage{amsmath}
\usepackage{amsthm}
\usepackage{amssymb}
\usepackage{geometry}
\usepackage{xcolor}
\usepackage{enumitem}
\usepackage{booktabs}
\usepackage{graphicx}
\usepackage{tikz}
\usepackage{pgfplots}
\pgfplotsset{compat=1.16}
\usepackage{hyperref}
\usepackage[numbers]{natbib}

\hypersetup{colorlinks=true,linkcolor=blue!50!black,citecolor=blue!50!black,urlcolor=blue!50!black}

\newtheorem{theorem}{Theorem}
\newtheorem{proposition}[theorem]{Proposition}
\newtheorem{lemma}[theorem]{Lemma}
\newtheorem{corollary}[theorem]{Corollary}
\theoremstyle{definition}
\newtheorem{definition}[theorem]{Definition}
\newtheorem{assumption}[theorem]{Hypothesis}
\theoremstyle{remark}
\newtheorem{remark}[theorem]{Remark}
\newtheorem{example}[theorem]{Example}
\newtheorem{problem}[theorem]{Problem}

\newcommand{\Pp}{\mathbb{P}}
\newcommand{\E}{\mathbb{E}}
\DeclareMathOperator{\Var}{Var}
\newcommand{\Unif}{\mathrm{Unif}}
\newcommand{\iid}{\stackrel{\text{iid}}{\sim}}
\newcommand{\ignore}[1]{}
\DeclareMathOperator{\Inf}{Inf}
\newcommand{\Ent}{\mathrm{Ent}}        

\makeatletter
\ifdefined\pdfextension
  \immediate\pdfextension obj stream attr {/Type /Metadata /Subtype /XML} file {\jobname.xmp}
  \pdfextension catalog {/Metadata \the\numexpr\pdffeedback lastobj\relax\space 0 R}
\else\ifdefined\pdfobj
  \immediate\pdfobj stream attr {/Type /Metadata /Subtype /XML} file {\jobname.xmp}
  \pdfcatalog{/Metadata \the\pdflastobj\space 0 R}
\fi\fi
\makeatother

\begin{document}
\title{Consensus times for monotone aggregation dynamics}
\author{Elchanan Mossel}
\date{\today}
\maketitle

\begin{abstract}
We study an asynchronous consensus dynamics on $N$ agents: at each step
a uniformly chosen agent replaces its state by $f(Y_1,\dots,Y_r)$, where
$f$ is a fixed monotone aggregation rule and $Y_1,\dots,Y_r$ are the
states of $r$ agents sampled uniformly with replacement.  Let $T$ be the
first time at which all agents agree.

Let $f:\{0,1\}^r\to\{0,1\}$ be monotone and non-constant.  The expected consensus time is governed by the two
\emph{endpoint degrees} $D_0(f)=\#\{i:f(e_i)=1\}$ and
$D_1(f)=\#\{i:f(\mathbf{1}-e_i)=0\}$, where $e_i$ is the $i$th standard
basis vector and $\mathbf{1}$ the all-ones vector.  
If $D_0(f)\ne 1$ and
$D_1(f)\ne 1$, then $\E[T]=\Theta\bigl(N(1+\log d_f)\bigr)$, where
$d_f$ counts the agents that must change state before the nearest
attracting consensus is reached, so that $\E[T]=O(N\log N)$
uniformly over initial states.  If $f$ is a dictator, $\E[T]$ is given
by a voter-model formula and equals
$\Theta\bigl(N^2\Ent(p_0)\bigr)$ with absolute constants,
uniformly over the initial state, where $\Ent$ is the binary entropy
and $p_0$ the initial fraction of agents in state $1$.  Otherwise
exactly one of $D_0(f),D_1(f)$ equals $1$ and the other equals $0$.
The worst-case expected consensus time is then $\Theta(N^{2-1/m})$, where
$m\ge 2$ is the least number of coordinates that force the value $1$ in
the residual rule of $f$, defined in Section~\ref{sec:intro} (of its
dual, when $D_1(f)=1$).  
\footnote{\textcopyright{} 2026 Elchanan Mossel. The author expressly
reserves the use of this work for text and data mining under
Article~4(3) of Directive (EU) 2019/790, including for the training or
fine-tuning of artificial intelligence models and the generation of
embeddings or synthetic derivatives, and this reservation is directed
to providers of general-purpose AI models under Article~53(1)(c) of
Regulation (EU) 2024/1689. Rights under Article~3 of Directive (EU)
2019/790 are unaffected.}


\end{abstract}

\noindent\textbf{MSC 2020.} 60J10, 60K35, 91D30, 68W15.

\smallskip
\noindent\textbf{Keywords.} consensus time, monotone Boolean function,
voter model, birth-death chain, scale function, Poincar\'e inequality,
opinion dynamics.

\section{Introduction}
\label{sec:intro}

\subsection{Model and Main Results}
We begin with a formal definition of the model and a statement of the main results. 
\paragraph{The dynamics.}
Let $r\ge 1$ and let $f:\{0,1\}^r\to\{0,1\}$ be coordinate-wise monotone
and \emph{idempotent}, meaning $f(0,\dots,0)=0$ and $f(1,\dots,1)=1$.
We study the discrete-time Markov chain
$X^{(t)}=(X_1^{(t)},\dots,X_N^{(t)})\in\{0,1\}^N$ defined by the following
step.
\begin{enumerate}[label=(\arabic*),itemsep=0pt]
\item Draw $V\sim\Unif([N])$, the agent that updates.
\item Draw $J_1,\dots,J_r\iid\Unif([N])$, independent of $V$.
\item Set $X_V^{(t+1)}=f(X_{J_1}^{(t)},\dots,X_{J_r}^{(t)})$ and leave
every other coordinate unchanged.
\end{enumerate}
Sampling is with replacement, so the $J_u$ may coincide with each
other and with $V$.  Idempotence makes the two constant configurations
absorbing, and we call
$T=\inf\{t:X^{(t)}\text{ is constant}\}$ the \emph{consensus time}.
Write $\mathcal{F}_t=\sigma(X^{(0)},\dots,X^{(t)})$,
$S_t=\#\{i:X_i^{(t)}=1\}$, $S_0=s$ and $p_t=S_t/N$, and set
$g(p)=\E_{\mathrm{Bern}(p)^{\otimes r}}[f]$, the \emph{mean-field map},
and $h=g-\mathrm{id}$.

We fix two more pieces of notation.  For
$i\in[r]$, $e_i$ is the
$i$th standard basis vector of $\{0,1\}^r$, $\mathbf{1}$ is the all-ones
vector and $\mathbf{0}$ the all-zeros vector.  A monotone
$f:\{0,1\}^r\to\{0,1\}$ is a \emph{dictator} if $f(x)=x_j$ for some
fixed $j\in[r]$.

\paragraph{Notation.}
Let $f:\{0,1\}^r\to\{0,1\}$ be monotone and non-constant, and let
\[
D_0(f)=\#\{i:f(e_i)=1\}=g'(0),\qquad
D_1(f)=\#\{i:f(\mathbf{1}-e_i)=0\}=g'(1)
\]
be its \emph{endpoint degrees} (the identifications with $g'(0)$ and $g'(1)$
are proved in Lemma~\ref{lem:g-binary}).  An absorbing endpoint is
\emph{attracting} when the drift $h=g-\mathrm{id}$ points towards it
in a neighbourhood; this holds for $0$ exactly when $D_0(f)=0$ and for $1$
exactly when $D_1(f)=0$.  Write $d_f(s)$ for the number of agents that must
change state before the nearest attracting consensus is reached
(Definition~\ref{def:attracting}), and call $f$ \emph{non-degenerate} when
$D_0(f)\ne 1$ and $D_1(f)\ne 1$.

For a degenerate non-dictator with $D_0(f)=1$, say $f(e_j)=1$, one has
$f(x)=x_j\vee\tilde f(x_{-j})$ where $\tilde f(x_{-j}):=f(x^{j\to0})$ is
the \emph{residual rule} of $f$ (Lemma~\ref{lem:degstruct}).  A
\emph{minterm} of a monotone Boolean function is a minimal set $B$ of
coordinates such that setting the coordinates in $B$ to $1$ and the rest to
$0$ gives the value $1$; every minterm of $\tilde f$ has size at least $2$.
Define
\begin{equation}\label{eq:mdef}
m = m(f) \;:=\; \min\bigl\{|B| : B \text{ is a minterm of } \tilde f\bigr\} \;\ge\; 2,
\end{equation}
and symmetrically (replacing $f$ by its dual $f^*$) when $D_1(f)=1$.

\begin{theorem}[Main result]\label{thm:main}
Let $f:\{0,1\}^r\to\{0,1\}$ be monotone and non-constant.  Every such $f$
falls into exactly one of three regimes.

\emph{(i) Non-degenerate: $D_0(f)\ne1$ and $D_1(f)\ne1$.}
There are constants $0<c<C<\infty$ depending only on $f$ such that
\[
cN\bigl(1+\log d_f(s)\bigr)\;\le\;\E[T\mid S_0=s]\;\le\;
CN\bigl(1+\log d_f(s)\bigr)
\qquad\text{for all }1\le s\le N-1.
\]
In particular $\E[T]=O(N\log N)$ uniformly in the initial state, and
$\E[T]=\Theta(N\log N)$ whenever $d_f(s)\ge\varepsilon N$.

\emph{(ii) Dictator: $D_0(f)=D_1(f)=1$.}
If $f(x)=x_j$ the chain is the asynchronous voter model on the complete
graph.  With $p_0=s/N$ and $\Ent(p)=-p\log p-(1-p)\log(1-p)$,
\[
\E[T\mid S_0=s]\;=\;\Theta\bigl(N^2\Ent(p_0)\bigr)
\]
with absolute constants, uniformly over $N\ge2$ and $1\le s\le N-1$
(Theorems~\ref{thm:bin-dict} and~\ref{cor:dict-asymp}).

\emph{(iii) Degenerate non-dictator: $\{D_0(f),D_1(f)\}=\{0,1\}$.}
With $m=m(f)\ge 2$ as in~\eqref{eq:mdef},
\[
\max_{1\le s\le N-1}\E[T\mid S_0=s]\;=\;\Theta\bigl(N^{2-1/m}\bigr),
\]
the lower bound being attained at $s\asymp N^{(m-1)/m}$.
\end{theorem}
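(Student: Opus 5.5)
The first step is to reduce everything to the one-dimensional chain $S_t$. Because the update depends only on the empirical fraction $p_t$, the process $S_t$ is a birth--death chain on $\{0,\dots,N\}$. In one step it moves up with probability $(1-p)g(p)$, down with probability $p(1-g(p))$, and otherwise stays put, where $p=S_t/N$. Its drift is $h(p)/N$ and its one-step variance is $\sigma^2(p)/N^2$, with
\[
\sigma^2(p)=(1-p)g(p)+p(1-g(p)).
\]
We can then write $\E[T\mid S_0=s]$ exactly through the scale function and speed measure of this chain: it is a double sum of ratios of products of the up and down probabilities. Every regime is then an asymptotic analysis of that formula, driven by the local form of $g$ near $0$ and $1$, which Lemma~\ref{lem:g-binary} supplies through $g'(0)=D_0$ and $g'(1)=D_1$. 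The trichotomy itself is combinatorial. Monotonicity and non-constancy give $D_0=D_1=1$ only for a dictator, and Lemma~\ref{lem:degstruct} shows that $D_0=1$ with $f$ not a dictator forces $D_1=0$ (and symmetrically). I would record this first.

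\emph{Regime (i).} Near an attracting endpoint, say $0$ with $D_0=0$, we have $g(p)=O(p^2)$. Hence $h(p)\le -p/2$ for small $p$, and the down rate is of order $p$, comparable to the distance to consensus. From level $k$ with $k\le\delta N$ the chain is a near-pure-death process with rate $\asymp k/N$. The expected time to kill the $d_f(s)$ minority agents is therefore $\asymp N\sum_{j\le d}1/j\asymp N(1+\log d)$, and this coupon-collector sum gives both bounds. A repelling endpoint has $D\ge 2$ and drift $h(p)\ge p$ near it, so from near it the chain escapes to the macroscopic region in time $O(N\log N)$. In the bulk, $h$ has finitely many zeros. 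At interior stable zeros the chain leaves by an unstable direction or by fluctuations; at unstable zeros, which are the relevant ones here, the escape takes $O(N\log N)$ by a standard linearization. Both of these must be dominated by the target bound, which is why $d_f$ in Definition~\ref{def:attracting} counts distance to the nearest attracting consensus. I expect the bulk analysis near an interior \emph{stable} fixed point $p^*$ to be the main obstacle, because there the chain would be trapped for an exponential time. So I must check, or invoke from the paper, that monotone idempotent $f$ with $D_0,D_1\ne1$ cannot make the situation worse than the bound allows. Failing a structural lemma ruling out interior stable points where both endpoints are attracting, I would argue via the scale-function formula that the exit cost is absorbed as stated.

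\emph{Regime (ii).} For a dictator $g(p)=p$, so $S_t$ is a martingale, the symmetric voter chain. The exact formula gives
\[
\E[T\mid S_0=s]=N\sum_k \frac{G(s,k)}{2(k/N)(1-k/N)},
\]
where $G$ is the Green function of simple random walk killed at $0$ and $N$. Evaluating this sum yields $N^2\bigl(-p_0\log p_0-(1-p_0)\log(1-p_0)\bigr)$ up to lower-order terms, uniformly with absolute constants.

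\emph{Regime (iii).} Take $D_0=1$, so $f=x_j\vee\tilde f$ and $g(p)=p+(1-p)\tilde g(p)$, where $\tilde g(p)\asymp p^m$ near $0$. Then $h(p)=(1-p)\tilde g(p)\asymp p^m$, a weak upward drift, while the endpoint $1$ is attracting with $D_1=0$. Near $0$ the variance is $\asymp p$. The natural scale is where the drift accumulated over the diffusive time equals the distance: the voter-type hitting time from $k$ is about $Nk$, and the drift becomes relevant when $k^m/N^{m-1}\cdot Nk\gtrsim k$, that is at $k\asymp N^{(m-1)/m}$.

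For the upper bound, the scale function $\phi'(k)=\exp(-\sum 2h/\sigma^2)$ has $\phi'(k)\approx \exp\bigl(-c(k/N)^{m}N/\,\cdot\,\bigr)$. It decays past $k_*=N^{(m-1)/m}$, and Green-function estimates then bound the expected time by $O(N\cdot k_*)=O(N^{2-1/m})$. After $k_*$, the chain travels to $1$ in $O(N\log N)$ by the regime (i) argument. For the lower bound, starting at $s\asymp k_*$ the chain behaves like a voter walk with negligible drift up to time $cNk_*$. The martingale $S_t$, corrected by its small drift, then shows it neither hits $0$ nor exceeds $2k_*$ quickly, giving expected time $\gtrsim N^{2-1/m}$. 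The case $D_1=1$ follows by duality, replacing $f$ by $f^*$ and $s$ by $N-s$.
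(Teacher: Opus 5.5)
Your proposal has a genuine gap in regime~(i), at the point you flag yourself, and the fallback you suggest cannot close it. The bound $N(1+\log d_f(s))$ holds only because $h=g-\mathrm{id}$ has a rigid sign pattern on $(0,1)$. It has constant sign when $D_0\ge1$ or $D_1\ge1$. When $D_0=D_1=0$ it changes sign exactly once, at a point $p_*$ with $g'(p_*)>1$. The scale-function formula cannot absorb a violation of this pattern. Suppose $g$ had an attracting interior fixed point $q$. Then $R=\delta/\beta$ would be $<1$ just left of $q$ and $>1$ just right of it. So $\gamma$ would have a local minimum at $\lfloor qN\rfloor$, a factor $e^{\Theta(N)}$ below values of $\gamma$ on both sides. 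Since $\widehat G(j)\ge\frac12\min(\Sigma_j,\Sigma_N-\Sigma_j)/(\delta_j\gamma_{j-1})$ and $\varphi(j)\ge G(j,j)=\widehat G(j)$, the expected time from there would be $e^{\Theta(N)}$. A tangential zero of $h$ would also break the bound. There $R\to1$ quadratically, and by the mechanism of regime~(iii) this creates a plateau of width of order $N^{2/3}$ and an excess of order $N^{4/3}$ over $N\log N$. Your picture of leaving a stable zero ``by an unstable direction'' does not apply either, since a one-dimensional drift has no such direction. So you must prove that such zeros do not occur.

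The missing idea is the fixed-point repulsion argument of Theorem~\ref{thm:fp}.
\begin{itemize}
\item By Margulis--Russo, $g'(p)=\sum_i\Inf_i^p(f)$.
\item The Bernoulli Poincar\'e inequality $\Var_p(f)\le p(1-p)\sum_i\Inf_i^p(f)$ (Lemma~\ref{lem:poincare}) is strict unless $f$ has degree at most $1$. For monotone Boolean $f$ that means $f$ is constant or a dictator.
\item At an interior fixed point $\Var_{p_*}(f)=p_*(1-p_*)$, so $g'(p_*)>1$. This excludes attracting and tangential zeros, and an infimum argument then excludes two zeros.
\item Separately, $D_0\ge1$ gives $f\ge x_j$, hence $h(p)=\Pp_p[X_j=0,\,f=1]>0$, and dually for $D_1$.
\end{itemize}
Together these give Corollary~\ref{cor:sign}, and only then do your heuristics become provable. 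First, $R$ is bounded away from $1$ outside a window around $p_*$. Second, the simple zero of $\log R$ keeps the window's contribution at $O(N\log N)$. Third, the geometric growth of $\gamma$ towards its peak at $p_*N$ makes the crossing factor $\Sigma_s/\Sigma_L\le C\nu^{-(L-s)}$ small. That is what gives $N(1+\log s)$, rather than $N\log N$, for $s$ deep in the basin of $0$.

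A smaller point in regime~(iii). Your claim that after $k_*$ the chain reaches $N$ in $O(N\log N)$ ``by the regime~(i) argument'' is wrong. For $K<j\le\pi_0N$ one has $R=1-\Theta((j/N)^{m-1})\to1$, so the geometric estimates of regime~(i) are unavailable there. The paper needs $M_j\le C(N/j)^{m-1}$ on that range, which contributes $\Theta(N^{2-1/m})$ to $\sum_j\widehat G(j)$, the same order as the plateau. The upper bound survives if you bound $\sum_j\widehat G(j)$ directly instead of splitting the trajectory into two phases. Your martingale lower bound from $s\asymp\varepsilon K$ is a valid alternative to the paper's Green-function bound.
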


In some sense these results say that the speed of convergence is determined by how dictatorial the function is: slowest for dictators, less slow for non-dictatorial functions that are dictator-like near $0$ or $1$, and fastest for all other functions.


\subsection{AI Use}
This paper is based on extensive collaboration with Claude. 
It is first motivated by the result of \cite{HA} that led the autor to consider the dynamics of this paper and conjecture that 
all monotone non-dictatorial functions have $O(n \log n)$ absorption time. The author then provided to Claude a sketch of the proof based on the analysis of the mean-field equation, Poincare inequality and martingale techniques. 
At first, Claude provided a proof along these lines as both the author and Claude did not consider the subtleties of the singularities of the mean field equation at $0$ and $1$. 
Later when the author requested extensive adversarial checks, Claude found an example with converge time of order $n^{3/2}$ which prompted Claude to suggest replacing some martingale arguments by more explicit Green function estimates.
The author together with Claude organized and cleaned this proof and also provided a high-level sketch of the continuous picture that underlies the results. 
The author read all the proofs in the paper and if there are errors, the responsibility lies with the author.

\subsection{Proof ideas}
The proof reduces the dynamics to a birth-death chain:
$S_t$ is itself a Markov chain on $\{0,\dots,N\}$, with $0$ and $N$
absorbing and with
\[
\Pp(i\to i+1)=(1-\pi)g(\pi),\qquad \Pp(i\to i-1)=\pi(1-g(\pi)),
\qquad \pi=i/N .
\]
Write $\beta_i$ and $\delta_i$ for these birth and death
probabilities.  The scale function
$\gamma_i=\prod_{l=1}^{i}\delta_l/\beta_l$ and its partial sums
$\Sigma_i=\sum_{l<i}\gamma_l$ give the expected absorption time of a
birth-death chain in closed form (Proposition~\ref{prop:green}).

Two structural inputs drive the estimates.  First, the Bernoulli
Poincar\'e inequality $\Var_p(f)\le p(1-p)g'(p)$, applied at an interior
fixed point $p_*$ where $\Var_{p_*}(f)=p_*(1-p_*)$, gives
$g'(p_*)\ge 1$, with equality only for constants and dictators.  Hence
every interior fixed point is repelling, there is at most one of them,
and $h$ has exactly one of three sign patterns on $(0,1)$
(Corollary~\ref{cor:sign}).  Second, a combinatorial lemma identifies
$g'(0)$ and $g'(1)$ with the endpoint degrees $D_0(f),D_1(f)$ and
matches each sign pattern to its attracting endpoints
(Lemma~\ref{lem:endpoint}).

By Proposition~\ref{prop:green}, $\varphi(s)=\sum_j G(s,j)$, where
$G(s,j)$ is the expected number of steps the chain spends at $j$ before
absorption, starting from $s$; since lingering at $j$ is easiest when
starting there, $G(s,j)\le G(j,j)=:\widehat G(j)$ for every $s$, an
$s$-independent envelope.

Away from an endpoint of degree exactly $1$, $R=\delta/\beta$ has a
limit there that is never $1$: a finite value $1/D_0$ or $D_1$ when the
endpoint is repelling (degree $\ge2$), or $0$ or $+\infty$ when it is
attracting (degree $0$) (Lemma~\ref{lem:rates}).  Continuity then bounds
$R$ away from $1$ on the whole closed interval whenever neither endpoint
has degree $1$, so $\gamma$ is a globally geometric sequence -- uniformly
decaying if $D_1=0$, uniformly growing if $D_0=0$
(Lemma~\ref{lem:scale}(a),(b)) -- and $\widehat G(j)\asymp N^2/(j(N-j))$
at every state $j$.  What makes this the actual size of $\varphi(s)$,
not merely a bound on it, is that $G(s,j)$ itself attains this order,
not just its envelope: for every $j$ between $s$ and its nearer
attracting endpoint the damping factor
$\rho_s(j)=G(s,j)/\widehat G(j)$ of~\eqref{eq:decomp} stays $\asymp1$
over that whole stretch, of length $d_f(s)$, and summing
$N^2/(j(N-j))\asymp N/\min(j,N-j)$ there gives $N\log d_f(s)$.  On the
other side of $s$, $\rho_s(j)$ decays geometrically in $|j-s|$ -- the
same boundedness of $R$ away from $1$, now read as a gambler's-ruin
ratio rather than a local rate -- so that side contributes only $O(N)$
however far it runs.  Altogether $\varphi(s)\asymp N(1+\log d_f(s))$,
and in particular $\E[T]=O(N\log N)$ uniformly.

The interior fixed point $p_*$, when both endpoints are attracting, is
governed by the same dichotomy at one remove.  The strict Bernoulli
Poincar\'e inequality forces $\log R$ to have a simple, first-order zero
at $p_*$, so $\log\gamma_j=\sum_{l\le j}\log R(l/N)$ has a genuine
quadratic peak at $j=p_*N$ (Figure~\ref{fig:gamma}): $\gamma$ stays
within a constant factor of that peak only over a window of width
$O(\sqrt N)$, with $R$ again bounded away from $1$ outside it
(Lemma~\ref{lem:scale}(c)).  Summed by the same dyadic argument, now in
distance from the peak rather than from an endpoint, the window
contributes the same order $N\log N$ as the rest of the range and so
never enlarges the exponent; its role is instead to decouple the two
attracting basins (Remark~\ref{rem:poincare-role}).

The one way this boundedness can fail is an endpoint degree of exactly
$1$: then $R\to1$ \emph{at} that endpoint, at the polynomial rate
$R=1-\Theta(\pi^{m-1})$ fixed by the least minterm size $m\ge2$ of the
residual rule (Lemma~\ref{lem:degstruct}), rather than approaching a
limit $\ne1$.  Now $\gamma_j\asymp1$, with no decay at all
(Lemma~\ref{lem:degprofile}(i),(ii)), until $j$ is large enough that the
accumulated exponent $N(j/N)^m$ reaches order $1$, i.e.\ up to the
crossover $K=N^{(m-1)/m}$; past $K$, $\gamma_j$ falls off as
$\exp(-\Theta(j^m/N^{m-1}))$, with $M_j\le C(N/j)^{m-1}$
(Lemma~\ref{lem:degprofile}(iv)).  Both the flat plateau below $K$ and
the falling tail just above it contribute the same order $N^{2-1/m}$,
concentrated within a single range around $j\asymp K$ rather than
spread over the $O(\log N)$ dyadic scales that produced the logarithm
above: this is regime~(iii), with the worst initial state at
$s\asymp K$ (Theorem~\ref{thm:bin-deg}).

\ignore{
\paragraph{Numerical method.}\label{par:numerics}
We have checked the $\Theta$ statements above numerically and report
the ranges alongside the proofs.  Except for one
computation in exact rational arithmetic, named at the end of this
paragraph, all reported values are floating-point approximations, and
the word ``exact'' is reserved for that one computation.  For a binary
rule we evaluate $\varphi(s)=\E[T\mid S_0=s]$ from the Green's
function of Proposition~\ref{prop:green} in the form
\[
\varphi(s)\;=\;\frac{\Sigma_N-\Sigma_s}{\Sigma_N}
\sum_{j<s}\frac{\Sigma_j}{\beta_j\gamma_j}
\;+\;\frac{\Sigma_s}{\Sigma_N}
\sum_{j\ge s}\frac{\Sigma_N-\Sigma_j}{\beta_j\gamma_j},
\]
where the scale function $\gamma$ is first normalised by its maximum.
Its prefix sums $\Sigma_j$, its suffix sums and the two sums above are
then accumulated in log space, the suffix sums directly and never as a
difference $\Sigma_N-\Sigma_j$.  This returns $\varphi(s)$ for all $s$
in $O(N)$ operations.  The log-space accumulation is what makes the
computation possible: for every non-dictator rule the scale function
spans $e^{\Theta(N)}$ (Figure~\ref{fig:gamma}), so a direct evaluation
of $\sum_jG(s,j)$ overflows in double precision.  The
death probability is computed as
$\delta_j=\pi\,\Pp_{\mathrm{Bern}(\pi)^{\otimes r}}[f=0]$ from the
dual rule rather than as $\pi(1-g(\pi))$, which loses precision when
$g(\pi)$ is close to $1$.  Compared against a direct solution of the
Poisson equation of Proposition~\ref{prop:green} by the tridiagonal
algorithm, this evaluation agrees to a relative error below $10^{-12}$
at $N=1000$ for every rule quoted in this paper.  The one exact
computation verifies the closed formula of
Theorem~\ref{thm:bin-dict} against that Poisson equation in rational
arithmetic for $N\le 14$ (Remark~\ref{rem:dict-bulk}).
}

\subsection{Related work}
The mean-field equation $p'=g(p)-p$ for specific families of $f$
has been analysed in the statistical-physics literature: the nonlinear
$q$-voter model of Castellano, Mu\~noz and Pastor-Satorras
\cite{CMP09}, the Galam majority-rule models \cite{Galam}, and the
textbook treatment of Krapivsky, Redner and Ben-Naim \cite{KRBN10}.
Sood and Redner \cite{SR05} treat heterogeneous-graph extensions.

The birth-death reduction and its Green's-function machinery are
themselves classical: they go back to Karlin and McGregor's
construction of the scale function and speed measure of a
birth-death chain \cite{KM57}.  The same visit-counting idea recurs
in two more recent papers whose setting is close to ours.  Hathcock
and Strogatz \cite{HS22} decompose a birth-death generator into a
biased-random-walk part and a diagonal waiting-time part and read
off a matrix of expected visit numbers -- our $G(s,j)$, in different
notation -- to obtain absorption-time distributions for a broad
class of extinction-prone chains, including a one-parameter family
whose rates vanish at a boundary at a tunable polynomial rate; the
resulting universality classes play the same role there that the
minterm size $m$ plays in our Lemma~\ref{lem:degprofile}.  Altrock
and Traulsen \cite{AT09} study weak-selection fixation times for the
Fermi process using the identical ratio $\gamma_i=T_i^-/T_i^+$ that
we call $\gamma$, though their expansion is regular rather than
singular.  Closer still to our degenerate regime~(iii): Doering,
Sargsyan and Sander \cite{DSS04} show, for a related class of
birth-death processes, that the naive Fokker--Planck (mean-field
diffusion) limit of the master equation reproduces the exact
absorption time only when the deterministic drift is small
everywhere, and that at a boundary where the drift's derivative also
vanishes -- the analogue of our $\Psi'(0)=0$ -- the exact chain
acquires an anomalous power-law correction that the naive continuum
equation misses; we return to this parallel in
Remark~\ref{rem:mfe-singularity}.

In distributed computing, Doerr, Goldberg, Minder, Sauerwald and
Scheideler \cite{DGMSS11}, Cooper, Els\"asser and Radzik \cite{CER14},
Becchetti, Clementi, Natale, Pasquale and Silvestri \cite{BCNPS15},
Berenbrink, Clementi, Els\"asser, Kling, Mallmann-Trenn and Natale
\cite{BCEKMN17}, and Ghaffari and Lengler \cite{GL18} prove
$O(\log N)$ consensus bounds for $2$-choice, $3$-majority and plurality
dynamics on $K_N$ and on random graphs.  These are results for the
\emph{synchronous} model, in which every agent updates in every round.
A synchronous bound of $O(\log N)$ rounds is often quoted as an
asynchronous bound of $O(N\log N)$ steps, on the grounds that $N$
asynchronous steps make up one round on average.  
We note this 
correspondence at the heuristic level as 
the two models are not equivalent, i.e., synchronous majority dynamics
admits cycles of period two, which never reach consensus, whereas the
asynchronous chain absorbs almost surely from every initial state.
Mohan and Pra\l at \cite{MP24}
analyse \emph{asynchronous} majority dynamics on binomial random graphs,
where one agent updates at a time.  Among the works listed here, that is
the closest in timing to ours, though the update rule there is majority
over a fixed neighbourhood rather than over sampled agents.

Mossel, Neeman and Tamuz \cite{MNT14} and the Mossel--Tamuz survey
\cite{MT17} treat iterative majority on social networks from an
information-aggregation viewpoint.  Tran and Vu \cite{TV20} study synchronous
majority dynamics on $G(n,p)$ and prove a ``power of few'' phenomenon:
if one of the two opinions holds an initial lead of at least a constant
number of agents, it wins with probability at least $1-\varepsilon$.
Unanimity is then reached after a constant number of rounds, four in
their sharpest statement.  Their technique is a shrinking argument,
which bounds the size of the minority set after each round through a
universal-reduction property of the graph.  Tran and Vu \cite{TV25}
extend the phenomenon to the sparse regime.  Their setting differs from
ours in three ways: the update rule is majority over $\Theta(n)$
neighbours rather than over $r$ sampled agents, the timing is
synchronous rather than asynchronous, and the quantity studied is which
opinion wins from a near-balanced start rather than the expected time to
consensus.  Their results therefore do not specialise to any case
treated here.  Mossel and Schoenebeck \cite{MS10} formulate the problem
with memory and communication constraints.

The Bernoulli Poincar\'e step in our proof is the same ingredient
that drives the sharp-threshold theorems of Russo \cite{Russo82},
Friedgut and Kalai \cite{FK96}, and
Bourgain--Kahn--Kalai--Katznelson--Linial \cite{BKKKL92}; see also
Rossignol \cite{Rossignol06} and Kalai's survey \cite{Kalai18}.
The mean-field map $g$ is monotone, so the flow $p'=g(p)-p$ is a
monotone dynamical system in the Hirsch--Smith sense
\cite{HirschSmith05}; monotone couplings for attractive interacting
particle systems go back to the Liggett--Holley device
\cite{Liggett85}.  Related mean-field
analyses of monotone probabilistic cellular automata appear in Balister,
Bollob\'as and Kozma \cite{BBK06}, and bootstrap percolation
\cite{Holroyd03,BBDM12,JLTV12} is the one-sided analogue of the
dynamics studied here.

Our paper is directly motivated by the recent preprint 
\emph{Global Stability of Coordination under Monotone Sampling} by Heller and Arigapudi \cite{HA} where similar dynamics are studied for an infinite population model and where among monotone functions it is shown that dictator is the only one that is not {\em stable}.


\subsection{Outline}
Section~\ref{sec:binary} proves the trichotomy and treats the
non-degenerate and the degenerate regimes.  Section~\ref{sec:dict}
treats the dictator case as the flat special case of the birth-death
formula.

\subsection*{Acknowledgments}
The author was partially supported by  Bush Faculty Fellowship ONR-N00014-20-1-2826, Simons Investigator award (622132), and MURI  grant N000142412742. 

\section{The trichotomy and the non-dictator regimes}
\label{sec:binary}

Throughout, $f:\{0,1\}^r\to\{0,1\}$ is monotone and non-constant.  Define the two \emph{endpoint degrees}
\begin{equation}\label{eq:endpointdeg}
D_0(f)\;=\;\#\{i:f(e_i)=1\}\;=\;g'(0),\qquad
D_1(f)\;=\;\#\{i:f(\mathbf 1-e_i)=0\}\;=\;g'(1),
\end{equation}
the identifications with the endpoint derivatives of $g$ being part of
Lemma~\ref{lem:g-binary}, and call $f$ \emph{non-degenerate} if
$D_0(f)\ne1$ and $D_1(f)\ne1$.  These two counts partition the monotone
non-constant rules into three branches
(Theorem~\ref{thm:trichotomy} below).  The consensus time is determined
for the dictator branch in Section~\ref{sec:dict} and for the other two
here; the results are collected in Table~\ref{tab:trichotomy}.

\subsection{\texorpdfstring{Preliminaries on $g$ and the Bernoulli Poincar\'e inequality}{Preliminaries on g and the Bernoulli Poincare inequality}}

We begin with some well known preliminaries. 
Being monotone and non-constant is equivalent to being monotone and
idempotent.  We keep the notation of Section~\ref{sec:intro}
for $e_i$, $\mathbf 0$, $\mathbf 1$ and dictators, and write $x_{-i}$ for
$x$ with the $i$th coordinate deleted and
$x^{i\to b}$ for $x$ with the $i$th coordinate set to $b$.  The discrete
derivative and the $p$-biased influence are
\[
\partial_i\psi(x)\;=\;\psi(x^{i\to 1})-\psi(x^{i\to 0}),\qquad
\Inf_i^p(f)\;=\;\Pp_p\bigl[f(x^{i\to1})\neq f(x^{i\to0})\bigr],
\]
where $\Pp_p$ is $\mathrm{Bern}(p)^{\otimes r}$; here $\partial_i\psi$ does
not depend on $x_i$, and $\Inf_i^p(f)=\E_p[\partial_i f]$ for monotone
Boolean $f$.

\begin{lemma}[Properties of $g$]\label{lem:g-binary}
Let $f:\{0,1\}^r\to\{0,1\}$ be monotone and non-constant.  Then
$f(\mathbf 0)=0$ and $f(\mathbf 1)=1$; $g$ is a polynomial of degree at most
$r$ with $g(0)=0$ and $g(1)=1$; $g$ is strictly increasing on $[0,1]$ with
$g'(p)>0$ for every $p\in(0,1)$; and
\[
g'(p)\;=\;\sum_{i=1}^r\Inf_i^p(f),\qquad
g'(0)\;=\;\#\{i:f(e_i)=1\},\qquad
g'(1)\;=\;\#\{i:f(\mathbf 1-e_i)=0\}.
\]
The two endpoint derivatives may vanish, so $g'$ is not bounded below by a
positive constant on the closed interval.
\end{lemma}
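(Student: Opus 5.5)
We prove the assertions in order, using only the expansion $g(p)=\sum_{x\in\{0,1\}^r} f(x)\,p^{|x|}(1-p)^{r-|x|}$ and Russo's formula.

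\textbf{Endpoint values of $f$.} Since $f$ is non-constant, there are $x,y$ with $f(x)=0$ and $f(y)=1$. Monotonicity gives $f(\mathbf 0)\le f(x)=0$ and $f(\mathbf 1)\ge f(y)=1$. Hence $f(\mathbf 0)=0$ and $f(\mathbf 1)=1$.

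\textbf{Polynomial structure of $g$.} The expansion shows that $g$ is a polynomial of degree at most $r$. Evaluating at the endpoints gives $g(0)=f(\mathbf 0)=0$ and $g(1)=f(\mathbf 1)=1$.

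\textbf{Russo's formula.} Write $g(p_1,\dots,p_r)=\E_{\otimes_i\mathrm{Bern}(p_i)}[f]$. This is affine in each $p_i$, and conditioning on $x_{-i}$ gives
\[
\partial g/\partial p_i=\E[f(x^{i\to1})-f(x^{i\to0})]=\E[\partial_i f].
\]
For monotone Boolean $f$ we have $\partial_i f\in\{0,1\}$, so $\E[\partial_i f]=\Inf_i^p(f)$. The chain rule along the diagonal $p_1=\dots=p_r=p$ then gives $g'(p)=\sum_i\Inf_i^p(f)$ for all $p\in[0,1]$, by continuity of the polynomial at the endpoints.

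\textbf{Strict positivity of $g'$ on $(0,1)$.} Since $f(\mathbf 0)=0\ne 1=f(\mathbf 1)$, some coordinate $i$ has $\partial_i f(z)=1$ for some $z$. To see this, walk from $\mathbf 0$ to $\mathbf 1$ flipping one coordinate at a time; the value of $f$ must change at some step. For $p\in(0,1)$ every $x_{-i}$ has positive probability, so $\Inf_i^p(f)\ge p^{|z_{-i}|}(1-p)^{r-1-|z_{-i}|}>0$. Thus $g'>0$ on $(0,1)$, and $g$ is strictly increasing on $[0,1]$.

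\textbf{Derivative at $p=0$.} Here $\mathrm{Bern}(0)^{\otimes r}$ is the point mass at $\mathbf 0$. Therefore
\[
\Inf_i^0(f)=f(e_i)-f(\mathbf 0)=f(e_i),
\]
and summing over $i$ gives $g'(0)=\#\{i:f(e_i)=1\}$.

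\textbf{Derivative at $p=1$.} The measure is the point mass at $\mathbf 1$, so
\[
\Inf_i^1(f)=f(\mathbf 1)-f(\mathbf 1-e_i)=1-f(\mathbf 1-e_i),
\]
which sums to $\#\{i:f(\mathbf 1-e_i)=0\}$.

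\textbf{The endpoint derivatives can vanish.} For the final remark, take $f=x_1\wedge x_2$. Then $g(p)=p^2$ and $g'(0)=0$. Dually, $f=x_1\vee x_2$ gives $g'(1)=0$.

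\textbf{Main obstacle.} The only delicate point is justifying Russo's formula at the closed endpoints, where the influences are taken with respect to degenerate measures. This is handled by the fact that the multilinear identity $\partial g/\partial p_i=\E[\partial_i f]$ is a polynomial identity valid for all $p_i\in[0,1]$, including $0$ and $1$, with no positivity required.
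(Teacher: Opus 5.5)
Your proof is correct and follows the paper's argument essentially step for step: the endpoint values come from monotonicity, $g$ is handled through its Bernstein expansion, Russo's formula gives $g'$ with the endpoint influences read off from the point masses at $\mathbf 0$ and $\mathbf 1$, and a single pivotal edge gives $\Inf_i^p(f)>0$ on $(0,1)$. The only difference is cosmetic: you show the endpoint derivatives can vanish using $x_1\wedge x_2$ and $x_1\vee x_2$ separately, whereas the paper uses $\mathrm{MAJ}_3$, for which both vanish at once; either suffices for the stated conclusion.
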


\begin{proof}
Monotonicity gives $f(\mathbf 0)\le f(x)\le f(\mathbf 1)$ for all $x$.  If
$f(\mathbf 0)=1$ then $f\equiv 1$ and if $f(\mathbf 1)=0$ then $f\equiv 0$,
both contradicting non-constancy; so $f(\mathbf 0)=0$ and $f(\mathbf 1)=1$.
Expanding, $g(p)=\sum_{x}f(x)p^{|x|}(1-p)^{r-|x|}$ with $|x|=\sum_i x_i$ is
a polynomial of degree at most $r$, and evaluating at $p=0,1$ gives
$g(0)=f(\mathbf 0)=0$, $g(1)=f(\mathbf 1)=1$.

The identity $g'(p)=\sum_i\Inf_i^p(f)$ is the Margulis--Russo formula
\cite{Russo82}; for monotone $f$ it also follows from
$\frac{\partial}{\partial p}\E_p[f]=\sum_i\E_p[\partial_i f]$ for a product
measure together with $\E_p[\partial_i f]=\Inf_i^p(f)$.  Since $\Pp_0$ is
the point mass at $\mathbf 0$ and $\Pp_1$ the point mass at $\mathbf 1$,
$\Inf_i^0(f)=\mathbf 1\{f(e_i)=1\}$ and
$\Inf_i^1(f)=\mathbf 1\{f(\mathbf 1-e_i)=0\}$, which gives the two endpoint
formulas.

Since $f$ is non-constant there are $x\le y$ differing in one coordinate $i$
with $f(x)=0<1=f(y)$; for $p\in(0,1)$ every point of $\{0,1\}^r$ has
positive $\Pp_p$-mass, so $\Inf_i^p(f)>0$ and hence $g'(p)>0$, and $g$ is
strictly increasing on $[0,1]$.  Both endpoint derivatives vanish for
$f=\mathrm{MAJ}_3$, so no positive lower bound on $g'$ holds on $[0,1]$.
\end{proof}

\begin{lemma}[Bernoulli Poincar\'e inequality]\label{lem:poincare}
Let $p\in(0,1)$ and $\psi:\{0,1\}^r\to\mathbb R$.  Then
\[
\Var_p(\psi)\;\le\;p(1-p)\sum_{i=1}^r\E_p\bigl[(\partial_i\psi)^2\bigr],
\]
with equality if and only if $\psi$ has $p$-biased Fourier degree at most
$1$, that is, $\psi(x)=c_0+\sum_{i=1}^r\psi_i(x_i)$ for some functions
$\psi_i$ of a single coordinate.  Among monotone Boolean $\psi$ the
functions of degree at most $1$ are exactly the constants and the
dictators.  In particular if $f$ is monotone Boolean and not a constant or
a dictator, then
$g(p)(1-g(p))<p(1-p)g'(p)$ for every $p\in(0,1)$.
\end{lemma}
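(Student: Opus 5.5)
We prove the inequality through the $p$-biased Fourier (Efron--Stein) expansion. Set $\sigma=\sqrt{p(1-p)}$ and $\chi(x_i)=(x_i-p)/\sigma$, so that $\{\chi_S=\prod_{i\in S}\chi(x_i)\}_{S\subseteq[r]}$ is an orthonormal basis of $L^2(\Pp_p)$. Expand $\psi=\sum_S\hat\psi(S)\chi_S$. Since $\chi(1)-\chi(0)=1/\sigma$, we get $\partial_i\psi=\sigma^{-1}\sum_{S\ni i}\hat\psi(S)\chi_{S\setminus\{i\}}$. By orthonormality, $\E_p[(\partial_i\psi)^2]=\sigma^{-2}\sum_{S\ni i}\hat\psi(S)^2$. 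Summing over $i$ gives
\[
p(1-p)\sum_i\E_p[(\partial_i\psi)^2]=\sum_S|S|\,\hat\psi(S)^2\;\ge\;\sum_{S\ne\emptyset}\hat\psi(S)^2=\Var_p(\psi).
\]
Equality holds exactly when $\hat\psi(S)=0$ for all $|S|\ge2$, which is the degree-$\le1$ condition $\psi=c_0+\sum_i\psi_i(x_i)$ (any function of a single coordinate is affine in $\chi(x_i)$).

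Next we classify monotone Boolean functions of degree at most $1$. Write $\psi=c_0+\sum_i a_i x_i$ with $a_i\ge0$; each $a_i\ge0$ because $a_i=\partial_i\psi$ and $\psi$ is monotone. Suppose two coefficients $a_i,a_j$ are both positive. The four values $\psi$ takes as $(x_i,x_j)$ ranges over $\{0,1\}^2$, with the other coordinates fixed, are $c,\,c+a_i,\,c+a_j,\,c+a_i+a_j$. These are three or more distinct numbers, which is impossible for a function with values in $\{0,1\}$. Hence at most one coefficient is nonzero, and $\psi$ is either a constant or $c_0+a x_j$. Being Boolean and monotone then forces $\psi=x_j$, i.e.\ a dictator.

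Finally we apply the inequality to $\psi=f$. Since $f$ is Boolean, $\Var_p(f)=g(p)(1-g(p))$. Since $f$ is monotone, $\partial_i f\in\{0,1\}$, so $\E_p[(\partial_if)^2]=\Inf_i^p(f)$. By Lemma~\ref{lem:g-binary}, $\sum_i\Inf_i^p(f)=g'(p)$, so the right-hand side equals $p(1-p)g'(p)$. The inequality is strict because $f$ is neither a constant nor a dictator, and so by the classification above its degree exceeds $1$.

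There is no real obstacle. The only point needing care is the equality case: one must note that strictness requires some nonzero coefficient with $|S|\ge2$, and this is exactly what the classification step supplies.
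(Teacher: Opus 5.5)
Your proof is correct and follows the paper's argument essentially step for step: the same $p$-biased Fourier computation giving $\sum_S|S|\,\hat\psi(S)^2$, the same classification of degree-one monotone Boolean functions by ruling out two nonzero coefficients, and the same specialization to $\psi=f$ via Lemma~\ref{lem:g-binary}. The only cosmetic difference is that you exclude two positive coefficients by exhibiting three distinct values, whereas the paper first observes $c_i=\partial_i\psi\in\{0,1\}$ and then exhibits the impossible value $\psi(x)+2$.
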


\begin{proof}
Put $\sigma=\sqrt{p(1-p)}$ and $\chi_i(x)=(x_i-p)/\sigma$, and for
$B\subseteq[r]$ let $\chi_B=\prod_{i\in B}\chi_i$.  The $\chi_B$ form an orthonormal basis of
$L^2(\Pp_p)$, so $\psi=\sum_B\hat\psi(B)\chi_B$ with
$\hat\psi(B)=\E_p[\psi\chi_B]$, and
$\Var_p(\psi)=\sum_{B\neq\emptyset}\hat\psi(B)^2$.  Since
$\partial_i\chi_i=1/\sigma$ and $\partial_i\chi_B=0$ for $i\notin B$, we get
$\partial_i\chi_B=\chi_{B\setminus\{i\}}/\sigma$ for $i\in B$ and hence
$\partial_i\psi=\sigma^{-1}\sum_{B\ni i}\hat\psi(B)\chi_{B\setminus\{i\}}$.
The functions $\chi_{B\setminus\{i\}}$, $B\ni i$, are orthonormal, so
\[
p(1-p)\sum_{i=1}^r\E_p\bigl[(\partial_i\psi)^2\bigr]
=\sum_{i=1}^r\sum_{B\ni i}\hat\psi(B)^2
=\sum_{B}|B|\,\hat\psi(B)^2 .
\]
Comparing with $\Var_p(\psi)=\sum_{B\neq\emptyset}\hat\psi(B)^2$ gives the
inequality, with equality if and only if $\hat\psi(B)=0$ whenever
$|B|\ge 2$, that is, if and only if $\psi$ has $p$-biased degree at most $1$.
The change of basis between $\{\chi_B\}$ and the monomials
$\{\prod_{i\in B}x_i\}$ is triangular with respect to inclusion, so degree
at most $1$ in the $p$-biased basis is the same as degree at most $1$ in the
multilinear representation, and does not depend on $p$.

Let $\psi$ be monotone Boolean of degree at most $1$, say
$\psi(x)=c_0+\sum_i c_ix_i$.  Then $\partial_i\psi=c_i$, which lies in
$\{0,1\}$ because $\psi$ is Boolean and monotone.  If $c_i=c_j=1$ for
$i\neq j$, pick $x$ with $x_i=x_j=0$; then
$\psi(x+e_i+e_j)=\psi(x)+2\notin\{0,1\}$, a contradiction.  So at most
one $c_i$ is non-zero, and $\psi$ is a constant or a dictator.

For the last statement take $\psi=f$.  Then $\Var_p(f)=g(p)(1-g(p))$ and
$\E_p[(\partial_if)^2]=\E_p[\partial_i f]=\Inf_i^p(f)$ because
$\partial_if\in\{0,1\}$ for monotone Boolean $f$; now apply
Lemma~\ref{lem:g-binary}.
\end{proof}

\subsection{Fixed-point geometry}

Set $h=g-\mathrm{id}$, so $h(0)=h(1)=0$.

\begin{theorem}[Fixed-point repulsion]\label{thm:fp}
Let $f:\{0,1\}^r\to\{0,1\}$ be monotone, non-constant and not a dictator.
Then $h$ has at most one zero $p_*$ in $(0,1)$, and any such zero satisfies
$g'(p_*)>1$.
\end{theorem}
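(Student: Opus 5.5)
The plan is to combine the strict Poincaré inequality of Lemma~\ref{lem:poincare} with an elementary counting argument for zeros of $h$.

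\emph{Step 1: every interior zero has $g'(p_*)>1$.}
Let $p_*\in(0,1)$ with $h(p_*)=0$, so $g(p_*)=p_*$. Since $f$ is monotone, non-constant and not a dictator, the last statement of Lemma~\ref{lem:poincare} gives
\[
p_*(1-p_*)=g(p_*)(1-g(p_*))<p_*(1-p_*)g'(p_*).
\]
Dividing by $p_*(1-p_*)>0$ yields $g'(p_*)>1$. Thus $h'(p_*)>0$ at every interior zero, and in particular every such zero is simple.

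\emph{Step 2: at most one interior zero.}
I argue by contradiction. Suppose $h$ has two zeros in $(0,1)$. Since $h$ is a polynomial and not identically zero, its zeros in $(0,1)$ are isolated. A polynomial that is not identically zero also has only finitely many zeros, so I may take two consecutive interior zeros $a<b$, with no zero of $h$ in $(a,b)$.

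By Step 1, $h'(a)>0$, so $h>0$ just to the right of $a$. Likewise $h'(b)>0$, so $h<0$ just to the left of $b$. The intermediate value theorem then produces a zero of $h$ strictly inside $(a,b)$. This contradicts the choice of $a,b$ as consecutive zeros.

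\emph{Step 3: $h$ is not identically zero.}
This is needed for Step 2 and is the only point requiring care. If $h\equiv0$, then $g(p)=p$ and $g'(p)=1$ for all $p$. Then Step 1 would give $1>1$ at any interior point, a contradiction. (One could also check directly that $g=\mathrm{id}$ would force equality in the Poincaré inequality, which is excluded for non-dictators.)

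I do not anticipate a real obstacle here: the whole argument is the one-line use of the strict Poincaré inequality, together with the observation that two consecutive zeros cannot both be upward crossings.
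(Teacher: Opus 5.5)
Your proof is correct and follows the paper's argument: the strict Bernoulli Poincar\'e inequality at a fixed point gives $g'(p_*)>1$, and then two zeros that are both upward crossings lead to a contradiction. The only difference is in the topological step. You use finiteness of the zeros of the non-zero polynomial $h$ to pick consecutive zeros and apply the intermediate value theorem, whereas the paper sets $p_3=\inf\{p\in(p_1,p_2):h(p)\le 0\}$ to produce a zero with $h'(p_3)\le 0$ directly, which makes your separate Step~3 unnecessary there.
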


\begin{proof}
If $p_*\in(0,1)$ and $g(p_*)=p_*$, then $\Var_{p_*}(f)=p_*(1-p_*)$, and
Lemma~\ref{lem:poincare} gives
$p_*(1-p_*)<p_*(1-p_*)g'(p_*)$, that is, $g'(p_*)>1$.

Suppose $h$ had two zeros $p_1<p_2$ in $(0,1)$.  By the previous paragraph
$h'(p_i)=g'(p_i)-1>0$, so $h>0$ on some interval $(p_1,p_1+\varepsilon)$ and
$h<0$ on some interval $(p_2-\varepsilon,p_2)$.  Set
\[
p_3\;:=\;\inf\{p\in(p_1,p_2)\,:\,h(p)\le 0\},
\]
a well-defined element of $(p_1,p_2)$: the set is non-empty because $h<0$
just below $p_2$, and $p_3\ge p_1+\varepsilon>p_1$ because $h>0$ on
$(p_1,p_1+\varepsilon)$.  By definition of the infimum $h>0$ on $(p_1,p_3)$, so
$h(p_3)\ge 0$ by continuity; and there are $t_n\downarrow p_3$ with
$h(t_n)\le0$, so $h(p_3)\le 0$.  Hence $h(p_3)=0$, and $p_3$ is an interior
fixed point.  Since $h>0$ on $(p_1,p_3)$ and $h(p_3)=0$, every difference
quotient $(h(p_3)-h(p))/(p_3-p)$ with $p\in(p_1,p_3)$ is non-positive, so
$h'(p_3)\le 0$, that is, $g'(p_3)\le 1$.  This contradicts the first
paragraph applied at $p_3$.  Taking the infimum in this way covers zeros of
even order as well.
\end{proof}

\begin{corollary}[Sign pattern of the drift]\label{cor:sign}
Under the hypotheses of Theorem~\ref{thm:fp}, exactly one of the following
holds:
\begin{enumerate}[label=(\alph*),itemsep=0pt]
\item $h>0$ on $(0,1)$;
\item $h<0$ on $(0,1)$;
\item there is a unique $p_*\in(0,1)$ with $h(p_*)=0$, and $h<0$ on
$(0,p_*)$, $h>0$ on $(p_*,1)$.
\end{enumerate}
\end{corollary}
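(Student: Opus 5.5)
The corollary follows from Theorem~\ref{thm:fp} together with continuity of the polynomial $h=g-\mathrm{id}$; no further input about $f$ is needed. The split is by the number of zeros of $h$ in $(0,1)$, which by Theorem~\ref{thm:fp} is zero or one.

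\emph{No interior zero.} The function $h$ is continuous and nonvanishing on the connected set $(0,1)$. By the intermediate value theorem it has constant sign there, so exactly one of (a), (b) holds.

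\emph{Exactly one zero $p_*$.} Again by the intermediate value theorem, $h$ has constant sign on each of $(0,p_*)$ and $(p_*,1)$. Theorem~\ref{thm:fp} gives $h'(p_*)=g'(p_*)-1>0$. Since $h(p_*)=0$, we get $h(p)<0$ for $p$ slightly below $p_*$ and $h(p)>0$ for $p$ slightly above. Constancy of sign on each side then forces $h<0$ on $(0,p_*)$ and $h>0$ on $(p_*,1)$, which is pattern (c), with $p_*$ unique by Theorem~\ref{thm:fp}.

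\emph{Exclusivity.} The three patterns are pairwise incompatible. Cases (a) and (b) have no interior zero while (c) has one, and (a) and (b) prescribe opposite signs on a nonempty set.

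\emph{Main obstacle.} The only delicate point is ruling out a zero at which $h$ touches $0$ without changing sign, or has the "wrong" crossing direction (from $+$ to $-$). The strict inequality $g'(p_*)>1$ from Theorem~\ref{thm:fp} settles both: it forces a transversal crossing from negative to positive. This is exactly where the non-dictator hypothesis, via the strict Poincar\'e inequality of Lemma~\ref{lem:poincare}, enters.
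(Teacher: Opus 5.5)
Your proof is correct and is essentially the paper's own argument: you split on whether $h$ has an interior zero, use the intermediate value theorem to get constant sign on each component, and use $h'(p_*)=g'(p_*)-1>0$ from Theorem~\ref{thm:fp} to fix the signs on either side of $p_*$. Your explicit check that the three patterns are mutually exclusive is a small addition that the paper leaves implicit.
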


\begin{proof}
If $h$ has no zero in $(0,1)$ then, $h$ being continuous, the intermediate
value theorem forces $h$ to have constant sign there, which is case~(a) or
case~(b).  Otherwise, by Theorem~\ref{thm:fp} the zero is unique, call it
$p_*$, and $h$ has constant sign on each of $(0,p_*)$ and $(p_*,1)$, again
by the intermediate value theorem.  Theorem~\ref{thm:fp} gives
$h'(p_*)=g'(p_*)-1>0$, so $h<0$ immediately to the left of $p_*$ and $h>0$
immediately to the right; combined with constancy of sign on each side this
is case~(c).  The strict inequality $h'(p_*)>0$ is exactly what excludes the
attracting configuration $h>0$ on $(0,p_*)$, $h<0$ on $(p_*,1)$.
\end{proof}

\subsection{Endpoint structure}

\begin{lemma}[Endpoint structure]\label{lem:endpoint}
Let $f$ be monotone, non-constant and not a dictator.  Then
\begin{enumerate}[label=(\roman*),itemsep=0pt]
\item $D_0\ge 1$ $\iff$ $f\ge x_j$ pointwise for some $j$ $\iff$ $h>0$ on
$(0,1)$ (case~(a) of Corollary~\ref{cor:sign});
\item $D_1\ge 1$ $\iff$ $f\le x_j$ pointwise for some $j$ $\iff$ $h<0$ on
$(0,1)$ (case~(b));
\item at most one of $D_0,D_1$ is non-zero, and $D_0=D_1=0$ is exactly
case~(c).
\end{enumerate}
A dictator has $D_0=D_1=1$.
\end{lemma}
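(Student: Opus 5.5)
We prove (i) as a cycle of three implications; (ii) then follows by applying (i) to the dual $f^*(x)=1-f(\mathbf 1-x)$, and (iii) follows from (i), (ii) and Corollary~\ref{cor:sign}. The dual $f^*$ is monotone, non-constant and not a dictator. Its mean-field map is $g^*(p)=1-g(1-p)$, so $h^*(p)=-h(1-p)$. Moreover $f^*(e_i)=1$ iff $f(\mathbf 1-e_i)=0$, so $D_0(f^*)=D_1(f)$. Finally, $f^*\ge x_j$ iff $f\le x_j$.

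For (i), first suppose $D_0\ge1$, say $f(e_j)=1$. For any $x$ with $x_j=1$ we have $x\ge e_j$, so monotonicity gives $f(x)\ge f(e_j)=1$. Hence $f\ge x_j$ pointwise. Next suppose $f\ge x_j$. Since $f$ is not a dictator, $f\ne x_j$, so there is some $y$ with $f(y)=1$ and $y_j=0$. Then $f-x_j\ge0$ everywhere and is positive at $y$. Every point has positive mass under $\Pp_p$ for $p\in(0,1)$, so $g(p)-p=\E_p[f-x_j]>0$. This is case (a). Finally suppose $h>0$ on $(0,1)$. Then $g(p)>p$ near $0$, so $g'(0)\ge1$, because $g(0)=0$ and $g$ is a polynomial; if $g'(0)=0$, the expansion $g(p)=g'(0)p+O(p^2)$ would give $g(p)<p$ for small $p$. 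By Lemma~\ref{lem:g-binary}, $D_0=g'(0)\ge1$. One point needs care here: $g'(0)=1$ with a positive quadratic term is compatible with $h>0$, but it still gives $D_0\ge1$, so no sharper information is needed.

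For (iii), the three cases of Corollary~\ref{cor:sign} are mutually exclusive. By (i) and (ii), $D_0\ge1$ and $D_1\ge1$ would force both case (a) and case (b), which is impossible. So at most one of $D_0,D_1$ is non-zero. If $D_0=D_1=0$, then by (i) and (ii) neither case (a) nor case (b) holds, so case (c) holds. Conversely, case (c) excludes cases (a) and (b), and then (i) and (ii) give $D_0=D_1=0$.

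For the dictator $f=x_j$, we have $f(e_i)=1$ iff $i=j$ and $f(\mathbf 1-e_i)=0$ iff $i=j$, so $D_0=D_1=1$.

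The only delicate point is the step from $h>0$ to $D_0\ge1$, which needs the local expansion at $0$. The step from $f\ge x_j$ to $h>0$ uses the non-dictator hypothesis exactly once, through strict positivity of the measure. Everything else is direct.
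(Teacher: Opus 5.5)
Your proposal is correct and follows essentially the paper's argument: the same cycle $D_0\ge1\Rightarrow f\ge x_j\Rightarrow h>0\Rightarrow D_0\ge1$ for (i), duality via $f^*$ for (ii), and the mutual exclusivity of the cases of Corollary~\ref{cor:sign} for (iii). The paper phrases the last link of the cycle as the contrapositive, $D_0=0\Rightarrow g(p)=O(p^2)\Rightarrow h<0$ near $0$.
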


\begin{proof}
(i) If $f(e_j)=1$ and $x_j=1$ then $x\ge e_j$, so $f(x)\ge f(e_j)=1$; thus
$f\ge x_j$.  Conversely $f\ge x_j$ gives $f(e_j)\ge 1$.  Given $f\ge x_j$,
\[
h(p)\;=\;\E_p[f]-\E_p[x_j]\;=\;\E_p[f-x_j]\;=\;\Pp_p[X_j=0,\ f(X)=1].
\]
Since $f$ is not the dictator $x_j$ and $f\ge x_j$, there exists $x$ with
$x_j=0$ and $f(x)=1$, and this point has positive $\Pp_p$-mass for
$p\in(0,1)$; hence $h>0$ on $(0,1)$.  Conversely, if $D_0=0$ then
$g'(0)=0$ and $g(0)=0$, so $g(p)=O(p^2)$ and $h(p)=g(p)-p<0$ for all small
$p>0$, ruling out case~(a).

(ii) Apply (i) to the dual $f^*(x):=1-f(\mathbf 1-x)$, which is monotone,
non-constant and not a dictator, and satisfies $D_0(f^*)=D_1(f)$ and
$g_{f^*}(p)=1-g(1-p)$, hence $h_{f^*}(p)=-h(1-p)$.

(iii) By (i) and (ii), $D_0\ge1$ forces $h>0$ on $(0,1)$ and $D_1\ge1$
forces $h<0$ there, so they cannot both hold.  If $D_0=D_1=0$ then neither
case~(a) nor case~(b) occurs, so case~(c) does; and conversely case~(c)
excludes (i) and (ii).  For $f=x_j$ one has $f(e_j)=1$, $f(e_i)=0$ for
$i\ne j$, and $f(\mathbf 1-e_j)=0$, $f(\mathbf 1-e_i)=1$ for $i\ne j$.
\end{proof}

\begin{remark}[The endpoint degrees detect a dictator]\label{rem:d0d1}
The last sentence of Lemma~\ref{lem:endpoint} has a converse, in the
strong form: if $D_0(f)\ge1$ and $D_1(f)\ge1$ then $f$ is a dictator.
The argument uses monotonicity alone, so it is independent of
parts~(i)--(iii) and of the machinery behind them.  Suppose $f(e_i)=1$
and $f(\mathbf 1-e_j)=0$.  If $x_i=1$ then $x\ge e_i$, so $f(x)=1$; and
if $x_j=0$ then $x\le\mathbf 1-e_j$, so $f(x)=0$.  Hence
$x_i\le f(x)\le x_j$ for every $x$.  Taking $x=e_i$ gives
$1\le(e_i)_j$, so $j=i$, and then $x_i\le f(x)\le x_i$, that is,
$f=x_i$.  In particular $f$ is a dictator if and only if
$D_0(f)=D_1(f)=1$.
\end{remark}

The trichotomy follows.  The dictator branch is proved in
Section~\ref{sec:dict} and the other two in this section; the times
themselves are collected in Table~\ref{tab:trichotomy}.

\begin{theorem}[Trichotomy]\label{thm:trichotomy}
Every monotone non-constant $f:\{0,1\}^r\to\{0,1\}$ satisfies exactly
one of
\begin{enumerate}[label=(\roman*),itemsep=0pt]
\item $f$ is non-degenerate, that is, $D_0(f)\ne1$ and $D_1(f)\ne1$;
\item $f$ is a dictator, and then $D_0(f)=D_1(f)=1$;
\item $f$ is degenerate and not a dictator, and then exactly one of
$D_0(f),D_1(f)$ equals $1$ and the other equals $0$.
\end{enumerate}
The three cases are therefore distinguished by the pair
$(D_0(f),D_1(f))$ alone.
\end{theorem}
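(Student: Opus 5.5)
The plan is to read the trichotomy off the pair $(D_0(f),D_1(f))$, using Lemma~\ref{lem:endpoint} and Remark~\ref{rem:d0d1}. The three conditions as stated are mutually exclusive by construction. Case~(i) requires both degrees $\ne1$. Case~(ii), via Remark~\ref{rem:d0d1}, is equivalent to $D_0=D_1=1$. Case~(iii) is defined as the complement: at least one degree equals $1$ and $f$ is not a dictator. So every $f$ lands in exactly one case. The substance is therefore the two ``and then'' clauses together with the final sentence that the pair alone distinguishes the cases.

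For~(ii), the last sentence of Lemma~\ref{lem:endpoint} already gives $D_0=D_1=1$ for a dictator $f=x_j$. For~(iii), I will take $f$ degenerate and not a dictator, so some $D_k(f)=1$. By Lemma~\ref{lem:endpoint}(iii), applicable since $f$ is not a dictator, at most one of $D_0,D_1$ is non-zero. Hence the other degree is $0$, and exactly one equals $1$. Alternatively, Remark~\ref{rem:d0d1} gives this directly: if both degrees were $\ge1$, $f$ would be a dictator.

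For the final sentence, I will note that the three cases produce disjoint sets of pairs. Case~(i) has both entries $\ne1$. Case~(ii) has the pair $(1,1)$. Case~(iii) has the pair $(1,0)$ or $(0,1)$. Conversely, each pair determines its case: $(1,1)$ forces a dictator by Remark~\ref{rem:d0d1}, and $\{0,1\}$ forces a degenerate non-dictator, since a dictator would give $(1,1)$.

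There is no real obstacle here. The only point needing care is to invoke Lemma~\ref{lem:endpoint}(iii) only for non-dictators, and to use Remark~\ref{rem:d0d1} for the converse identification of dictators, which needs monotonicity alone.
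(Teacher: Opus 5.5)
Your proposal is correct and follows the paper's proof. Exhaustiveness and disjointness come from the definitions together with the fact that a dictator has $(D_0,D_1)=(1,1)$, and Lemma~\ref{lem:endpoint}(iii), applied only to non-dictators, pins case~(iii) down to $(1,0)$ or $(0,1)$. The only difference is minor: you use the converse in Remark~\ref{rem:d0d1} to read the case off the pair, whereas the paper gets the final sentence from exhaustiveness and the mutual exclusivity of the three pair-conditions alone, so it never needs that converse.
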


\begin{proof}
By the last sentence of Lemma~\ref{lem:endpoint} a dictator has
$D_0=D_1=1$, and is in particular degenerate.  The three cases are
therefore exhaustive and pairwise disjoint: (i) is the negation of
degeneracy, and (ii) and~(iii) split the degenerate $f$ according to
whether $f$ is a dictator.  This also gives the values of $(D_0,D_1)$
in case~(ii).  In case~(iii), $f$ is degenerate, so $D_0=1$ or $D_1=1$;
and at most one of $D_0,D_1$ is non-zero by
Lemma~\ref{lem:endpoint}(iii), $f$ not being a dictator.  Hence one of
the two equals $1$ and the other equals $0$.

For the last sentence, the three conditions on $(D_0,D_1)$ just
obtained, namely $D_0\ne1$ and $D_1\ne1$; $D_0=D_1=1$; and one of them
equal to $1$ and the other to $0$, are mutually exclusive.  Since the
three cases are exhaustive, the pair $(D_0(f),D_1(f))$ determines which
of them holds.
\end{proof}

\begin{table}[t]
\centering\small
\begin{tabular}{@{}llll@{}}
\toprule
regime & $(D_0,D_1)$ & $\E[T]$ & reference \\
\midrule
non-degenerate & $D_0\ne1$, $D_1\ne1$ & $\Theta\bigl(N(1+\log d_f(s))\bigr)$
  & Theorem~\ref{thm:bin-nondict} \\
dictator & $(1,1)$ & $\Theta\bigl(N^2\Ent(p_0)\bigr)$
  & Theorem~\ref{thm:bin-dict}, Corollary~\ref{cor:dict-asymp} \\
degenerate non-dictator & $(1,0)$ or $(0,1)$ & $\Theta\bigl(N^{2-1/m}\bigr)$
  & Theorem~\ref{thm:bin-deg} \\
\bottomrule
\end{tabular}
\caption{The three regimes of Theorem~\ref{thm:trichotomy}, the
condition on the endpoint degrees that defines each, and the expected
consensus times proved for them.  In the first two rows the
estimate is uniform in the initial state $S_0=s$, with $p_0=s/N$; in the
third it is for $\max_{1\le s\le N-1}\E[T\mid S_0=s]$, and $m\ge2$ is
the least size of a minterm of the residual rule of $f$, or of its dual
when $D_1(f)=1$.}
\label{tab:trichotomy}
\end{table}

\begin{definition}[Attracting endpoints and $d_f$]\label{def:attracting}
Let $f$ be monotone, non-constant and not a dictator.  An absorbing
endpoint is \emph{attracting} when the drift points towards it
in a neighbourhood of it.  By Lemma~\ref{lem:endpoint}, $0$ is attracting
if and only if $D_0=0$ and $1$ is attracting if and only if $D_1=0$; at
least one endpoint is attracting.  Write $d_f(s)$ for the number of
agents that must change state before the nearest attracting consensus is
reached, that is,
\[
d_f(s)\;=\;
\begin{cases}
\min(s,N-s) & \text{if }D_0(f)=D_1(f)=0\quad\text{(case (c))},\\
N-s & \text{if }D_0(f)\ge 1\quad\text{(case (a))},\\
s & \text{if }D_1(f)\ge 1\quad\text{(case (b))}.
\end{cases}
\]
\end{definition}

\begin{lemma}[Structure of a degenerate rule]\label{lem:degstruct}
Let $f$ be monotone, non-constant, not a dictator, with $D_0(f)=1$, say
$f(e_j)=1$.  Then
\[
f(x)\;=\;x_j\vee\tilde f(x_{-j}),\qquad \tilde f(x_{-j}):=f(x^{j\to0}),
\]
where the residual rule $\tilde f$ is monotone, $\tilde f\not\equiv 0$,
and every minterm of $\tilde f$ has size at least $2$ (residual rules and
minterms are as in Section~\ref{sec:intro}).  Moreover $D_1(f)=0$.  The
dual statement holds when $D_1(f)=1$.
\end{lemma}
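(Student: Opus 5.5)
The plan is to prove the decomposition first, then the properties of $\tilde f$, and finally the claim $D_1(f)=0$; the dual case follows by applying everything to $f^*$.

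\emph{The decomposition.} I would split on the value of $x_j$.
\begin{itemize}
\item If $x_j=1$, then $x\ge e_j$. Monotonicity and $f(e_j)=1$ give $f(x)=1=x_j\vee\tilde f(x_{-j})$. This is the argument already used in Lemma~\ref{lem:endpoint}(i).
\item If $x_j=0$, then $x=x^{j\to0}$, so $f(x)=\tilde f(x_{-j})=x_j\vee\tilde f(x_{-j})$.
\end{itemize}

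\emph{Properties of $\tilde f$.}
\begin{itemize}
\item Monotonicity of $\tilde f$ is inherited from $f$, since $x\mapsto x^{j\to0}$ is order preserving.
\item If $\tilde f\equiv0$, the decomposition would give $f=x_j$, a dictator, which is excluded. So $\tilde f\not\equiv0$.
\item $D_0(f)=1$ means $f(e_i)=0$ for every $i\ne j$. Since $\tilde f$ applied to the indicator of $\{i\}$ equals $f(e_i)$, no singleton is a minterm of $\tilde f$. The empty set is not a minterm either, because $\tilde f(\mathbf 0)=f(\mathbf 0)=0$ by Lemma~\ref{lem:g-binary}. Hence every minterm has size at least $2$.
\end{itemize}

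\emph{The claim $D_1(f)=0$.} The quickest route is Remark~\ref{rem:d0d1}: if $D_1(f)\ge1$ while $D_0(f)\ge1$, then $f$ is a dictator, a contradiction. Alternatively, one can invoke Lemma~\ref{lem:endpoint}(iii) directly. One could also check it by hand:
\begin{itemize}
\item For $i=j$: $f(\mathbf 1-e_j)=\tilde f(\mathbf 1)$. Since $\tilde f\not\equiv0$ and $\tilde f$ is monotone, $\tilde f(\mathbf 1)=1$.
\item For $i\ne j$: the point $\mathbf 1-e_i$ has $j$-th coordinate $1$, so $f(\mathbf 1-e_i)=1$.
\end{itemize}
Either way, no coordinate $i$ has $f(\mathbf 1-e_i)=0$, so $D_1(f)=0$.

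\emph{Dual statement.} Apply the above to $f^*(x)=1-f(\mathbf 1-x)$. This is legitimate because $f^*$ is monotone, non-constant and non-dictator, and $D_0(f^*)=D_1(f)$, as noted in Lemma~\ref{lem:endpoint}(ii).

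There is no genuine obstacle; the only points needing care are excluding the empty minterm and the case $\tilde f\equiv0$, and both are handled above.
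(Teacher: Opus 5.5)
Your proposal is correct and follows the paper's proof essentially step for step. Both arguments split on $x_j$, get monotonicity of $\tilde f$ by restriction, get $\tilde f\not\equiv 0$ from non-dictatorship, and derive a contradiction with $D_0(f)=1$ from a singleton minterm; your hand check of $f(\mathbf 1-e_i)=1$ for every $i$ is exactly how the paper obtains $D_1(f)=0$, and your extra care in excluding the empty minterm via $\tilde f(\mathbf 0)=0$, as well as the alternative routes through Remark~\ref{rem:d0d1} or Lemma~\ref{lem:endpoint}(iii), are also valid.
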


\begin{proof}
By Lemma~\ref{lem:endpoint}(i), $f\ge x_j$, so $f(x)=1$ whenever $x_j=1$,
and $f(x)=f(x^{j\to0})=\tilde f(x_{-j})$ when $x_j=0$; this is the displayed
formula, and $\tilde f$ is monotone as a restriction of $f$.  If
$\tilde f\equiv0$ then $f=x_j$, a dictator.  If $\tilde f$ had a minterm
$\{i\}$ of size $1$ then $f(e_i)=\tilde f(e_i|_{-j})=1$ with $i\ne j$,
contradicting $D_0(f)=1$.  Finally, for $i\neq j$ the vector
$\mathbf 1-e_i$ has $j$th coordinate $1$, so $f(\mathbf 1-e_i)=1$; and
$f(\mathbf 1-e_j)=\tilde f(\mathbf 1)=1$ because $\tilde f$ is monotone and
not identically $0$.  Hence $D_1(f)=0$.
\end{proof}

\subsection{The birth-death reduction}

\begin{proposition}[The count is a birth-death chain]\label{prop:bd}
The count $(S_t)_{t\ge0}$ is a Markov chain on
$\{0,1,\dots,N\}$, with $0$ and $N$ absorbing and, for $1\le i\le N-1$ and
$\pi=i/N$,
\[
\beta_i\;:=\;\Pp(S_{t+1}=i+1\mid S_t=i)\;=\;(1-\pi)g(\pi),\qquad
\delta_i\;:=\;\Pp(S_{t+1}=i-1\mid S_t=i)\;=\;\pi\bigl(1-g(\pi)\bigr).
\]
In particular $\beta_i-\delta_i=h(\pi)$.
\end{proposition}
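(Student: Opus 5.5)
The plan is to condition on the current configuration $X^{(t)}$ and check that the law of $S_{t+1}$ depends on $X^{(t)}$ only through $S_t$. This gives the Markov property for the count. Fix $X^{(t)}=x$ with $S_t=i$ and write $\pi=i/N$. The update draws $V$ and $J_1,\dots,J_r$ independently and uniformly, so the sampled states $Y_u=x_{J_u}$ are i.i.d.\ $\mathrm{Bern}(\pi)$. Since $V$ is independent of them, $Y_u\perp x_V$. Hence the new value $f(Y_1,\dots,Y_r)$ is $\mathrm{Bern}(g(\pi))$ and is independent of the old state $x_V$, which is $\mathrm{Bern}(\pi)$. All of these laws depend on $x$ only through $i$.

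Next I identify the transitions. The count changes by at most one, since only coordinate $V$ changes. It goes up exactly when $x_V=0$ and $f(Y)=1$. It goes down exactly when $x_V=1$ and $f(Y)=0$. Otherwise it stays put. By the independence above,
\[
\Pp(S_{t+1}=i+1\mid\mathcal F_t)=(1-\pi)g(\pi),\qquad
\Pp(S_{t+1}=i-1\mid\mathcal F_t)=\pi(1-g(\pi)).
\]
Both are functions of $S_t$ alone. This gives the Markov property of $(S_t)$ with respect to $\mathcal F_t$, and hence with respect to its own filtration.

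The endpoints are absorbing. At $i=0$ every $Y_u=0$, so $f(Y)=f(\mathbf 0)=0$ by Lemma~\ref{lem:g-binary}, and $x_V=0$; so nothing changes. The case $i=N$ is symmetric, using $f(\mathbf 1)=1$. Equivalently, $\beta_0=0$ because $g(0)=0$, and $\delta_N=0$ because $g(1)=1$.

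Finally I compute the drift:
\[
\beta_i-\delta_i=(1-\pi)g(\pi)-\pi+\pi g(\pi)=g(\pi)-\pi=h(\pi).
\]
There is no real obstacle. The only point needing care is that sampling with replacement, including the possibility $J_u=V$, still gives exact independence of the $Y_u$ from $x_V$. This holds because the indices are drawn independently and the states used are those at time $t$, before the update.
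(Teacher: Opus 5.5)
Your proposal is correct and follows the paper's proof essentially step for step. You write the increment as $X_V^{(t+1)}-X_V^{(t)}$, use the independence of $V$ from $(J_1,\dots,J_r)$ to factor the up and down probabilities as $(1-\pi)g(\pi)$ and $\pi(1-g(\pi))$, get the Markov property and absorption from $g(0)=0$ and $g(1)=1$, and compute the drift; your explicit observation that the $Y_u=x_{J_u}$ are i.i.d.\ $\mathrm{Bern}(\pi)$ given $X^{(t)}=x$ spells out a step the paper leaves implicit.
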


\begin{proof}
The increment is $S_{t+1}-S_t=X_V^{(t+1)}-X_V^{(t)}$, which is $+1$ exactly
when $X_V^{(t)}=0$ and $f(X^{(t)}_{J_1},\dots,X^{(t)}_{J_r})=1$, and $-1$
exactly when $X_V^{(t)}=1$ and the new value is $0$.  Given $\mathcal F_t$
these two events are independent, since $V$ is independent of
$(J_1,\dots,J_r)$, and they have probabilities $1-\pi$ and $g(\pi)$ with
$\pi=S_t/N$.  Both resulting probabilities are functions of $S_t$ alone, so
$(S_t)$ is Markov; at $S_t\in\{0,N\}$ they vanish because $g(0)=0$ and
$g(1)=1$.  Finally
$\beta_i-\delta_i=(1-\pi)g(\pi)-\pi(1-g(\pi))=g(\pi)-\pi=h(\pi)$.
\end{proof}

\begin{remark}[Drift identity]\label{rem:drift}
Taking expectations in Proposition~\ref{prop:bd} gives
$\E[S_{t+1}-S_t\mid\mathcal F_t]=\beta_{S_t}-\delta_{S_t}=h(p_t)$, which
identifies $h=g-\mathrm{id}$ as the drift of the number of ones and
explains the terminology.  It is not used in any proof below: the
estimates all run through the step law of Proposition~\ref{prop:bd}
instead.
\end{remark}

Thus $T$ is the absorption time of an explicit birth-death chain, and
the scale function gives its expectation in closed form.  Define
\[
\gamma_0=1,\qquad \gamma_i=\prod_{l=1}^{i}\frac{\delta_l}{\beta_l}\ \ (1\le i\le N-1),
\qquad \Sigma_i=\sum_{l<i}\gamma_l\ \ (0\le i\le N),
\]
so $\Sigma_0=0$, $\Sigma_1=1$, and
\begin{equation}\label{eq:pgqg}
\beta_j\gamma_j\;=\;\delta_j\gamma_{j-1}\qquad(1\le j\le N-1).
\end{equation}

Here $\Sigma$ is the \emph{scale function} of the chain and
$w_j=1/(\beta_j\gamma_j)$ its \emph{speed measure}, the discrete
counterparts of the scale and the speed of a one-dimensional diffusion;
see Karlin and McGregor~\cite{KM57} for the original construction, or
Karlin and Taylor~\cite{KT75} and Levin and
Peres~\cite[Section~2.5]{LP17} for textbook treatments.  Two consequences of~\eqref{eq:pgqg}
explain the names and are used below.  First, \eqref{eq:pgqg} says
exactly that $\Sigma$ is harmonic for the chain on $\{1,\dots,N-1\}$,
that is,
$\beta_s\bigl(\Sigma_{s+1}-\Sigma_s\bigr)=\delta_s\bigl(\Sigma_s-\Sigma_{s-1}\bigr)$,
so $\Sigma_{S_{t\wedge T}}$ is a bounded martingale and optional stopping
gives the gambler's-ruin identity
\[
\Pp\bigl(S_T=N\mid S_0=s\bigr)\;=\;\frac{\Sigma_s}{\Sigma_N}:
\]
in the coordinate $\Sigma$ the chain is a martingale, which is what
putting it \emph{on its natural scale} means.  Second, \eqref{eq:pgqg}
says that $w$ is a reversible measure for the chain,
$w_j\beta_j=w_{j+1}\delta_{j+1}$, so $w_j$ measures how long the chain
lingers at $j$.

The next proposition is the classical formula for the expected
absorption time of a birth-death chain in terms of its scale function
and speed measure \cite{KT75,LP17}.  The quantity $G(s,j)$ appearing in
it is the \emph{Green's function} of the chain killed at $\{0,N\}$: it
is the expected number of time steps spent at $j$ before absorption,
starting from $s$, so that summing it over $j$ gives the expected
absorption time.  In the scale-and-speed form the formula reads
$G(s,j)=\Sigma_{s\wedge j}(\Sigma_N-\Sigma_{s\vee j})\Sigma_N^{-1}w_j$,
a harmonic factor in $s$ and $j$ times the speed measure at $j$.  We
include the short proof because it is the explicit form of $G$, and not
merely its existence, that every estimate below uses.

\begin{proposition}[Green's function]\label{prop:green}
Let $\varphi(s)=\E[T\mid S_0=s]$ for $0\le s\le N$.  Then
$\varphi(0)=\varphi(N)=0$ and, for $1\le s\le N-1$,
\[
\varphi(s)\;=\;\sum_{j=1}^{N-1}G(s,j),\qquad
G(s,j)\;=\;\frac{\Sigma_{s\wedge j}\,(\Sigma_N-\Sigma_{s\vee j})}{\Sigma_N}
\cdot\frac{1}{\beta_j\gamma_j}.
\]
\end{proposition}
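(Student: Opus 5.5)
The plan is the standard first-step analysis for a birth-death chain. Because $T$ is a.s. finite with finite mean, $\varphi$ solves the Poisson problem
\[
\varphi(s)=1+\beta_s\varphi(s+1)+\delta_s\varphi(s-1)+(1-\beta_s-\delta_s)\varphi(s),\qquad 1\le s\le N-1,
\]
with boundary values $\varphi(0)=\varphi(N)=0$. Finiteness can be taken from a crude bound. From every interior state the chain reaches $\{0,N\}$ within $N$ steps with probability at least $\prod\min(\beta_i,\delta_i)$, which is positive because $g(\pi)\in(0,1)$ for $\pi\in(0,1)$ by Lemma~\ref{lem:g-binary}. Geometric trials then give $\E[T]<\infty$.

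The problem has a unique solution, since the difference of two solutions is harmonic for an irreducible killed chain and vanishes at both ends (maximum principle). So it suffices to check that the displayed $\sum_j G(s,j)$ satisfies the same equation with the same boundary values.

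Rearranged, the equation reads $\beta_s\Delta_{s+1}-\delta_s\Delta_s=-1$ with $\Delta_s=\varphi(s)-\varphi(s-1)$. For fixed $j$, consider $u_j(s):=G(s,j)$ as a function of $s$.
\begin{itemize}
\item For $s<j$ it is a constant multiple of $\Sigma_s$.
\item For $s>j$ it is a constant multiple of $\Sigma_N-\Sigma_s$.
\end{itemize}
Both $\Sigma$ and $\Sigma_N-\Sigma$ are harmonic, since \eqref{eq:pgqg} gives $\beta_s\gamma_s=\delta_s\gamma_{s-1}$, i.e.\ $\beta_s(\Sigma_{s+1}-\Sigma_s)=\delta_s(\Sigma_s-\Sigma_{s-1})$. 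Hence $u_j$ is harmonic for $s\ne j$, and it vanishes at $s=0$ (because $\Sigma_0=0$) and at $s=N$.

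The key step is the defect at $s=j$. Using $\Sigma_{j+1}-\Sigma_j=\gamma_j$ and $\Sigma_j-\Sigma_{j-1}=\gamma_{j-1}$, a direct computation gives
\[
\beta_j\bigl(u_j(j+1)-u_j(j)\bigr)-\delta_j\bigl(u_j(j)-u_j(j-1)\bigr)
=\frac{1}{\Sigma_N\,\beta_j\gamma_j}\Bigl(-\beta_j\gamma_j\Sigma_j-\delta_j\gamma_{j-1}(\Sigma_N-\Sigma_j)\Bigr).
\]
Applying \eqref{eq:pgqg} once more turns the bracket into $-\beta_j\gamma_j\Sigma_N$, so the defect equals $-1$.

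Summing over $j$, $\sum_j u_j$ satisfies the Poisson equation with right-hand side $-1$ at each interior $s$ and zero boundary values. By uniqueness it equals $\varphi$. The interpretation of $G(s,j)$ as expected visits to $j$ follows by the same uniqueness argument applied to the indicator source $\mathbf 1_{\{j\}}$, though the statement does not require it.

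The only real obstacle is bookkeeping in the defect computation: the jump $\Sigma_{j+1}-\Sigma_j=\gamma_j$ must be paired with $\beta_j$, and $\gamma_{j-1}$ with $\delta_j$, so that \eqref{eq:pgqg} collapses the expression to $\Sigma_N$. I would write this step out in full and verify it on $N=2$ as a sanity check.
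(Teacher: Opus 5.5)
Your proof is correct, but it runs in the opposite direction from the paper's.

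The paper \emph{derives} the formula. After the same finiteness and maximum-principle uniqueness steps, it solves the first-order recursion $\nabla(s+1)=(\delta_s/\beta_s)\nabla(s)-1/\beta_s$ for the increments, with $\gamma$ as a summation factor. Summing gives $\varphi(s)=\nabla(1)\Sigma_s-\sum_{l<s}(\Sigma_s-\Sigma_l)/(\beta_l\gamma_l)$. It then fixes $\nabla(1)$ from $\varphi(N)=0$ and reads off the coefficient of $1/(\beta_j\gamma_j)$ on each side of $s$.

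You instead take the formula as an ansatz and verify it column by column:
\begin{itemize}
\item each $G(\cdot,j)$ is harmonic off the diagonal, being a multiple of $\Sigma$ or of $\Sigma_N-\Sigma$, with the two branches agreeing at $s=j$;
\item it vanishes at $0$ and $N$;
\item it has defect exactly $-1$ at $s=j$, by \eqref{eq:pgqg}.
\end{itemize}
Superposition and uniqueness then finish the argument. Your diagonal bookkeeping is right, including the edge cases $j=1$ (where $\Sigma_1-\Sigma_0=\gamma_0$) and $j=N-1$ (where $\Sigma_N-\Sigma_{N-1}=\gamma_{N-1}$).

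The trade-off is as follows. The paper's route does not presuppose the answer. Yours is shorter to check and proves slightly more: each column $G(\cdot,j)$ solves the Poisson problem with source $\mathbf 1_{\{j\}}$, and so equals the expected number of visits to $j$. The paper states that interpretation in prose and uses it informally in Section~\ref{sec:potential}, but its own proof only identifies the sum over $j$.
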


\begin{proof}
All $\beta_j,\delta_j$ are strictly positive for $1\le j\le N-1$, so from
any state the chain reaches $0$ within $N$ steps with probability at least
$\prod_{j=1}^{N-1}\delta_j>0$; hence $\E[T]<\infty$.  Conditioning on the
first step, $\varphi$ solves the Poisson equation
$\beta_s(\varphi(s+1)-\varphi(s))-\delta_s(\varphi(s)-\varphi(s-1))=-1$ for
$1\le s\le N-1$ with $\varphi(0)=\varphi(N)=0$.  This system has at most
one solution, by the discrete maximum principle: a solution $u$ of the
homogeneous system satisfies
$(\beta_s+\delta_s)u(s)=\beta_su(s+1)+\delta_su(s-1)$, so $u(s)$ is a
convex combination of $u(s-1)$ and $u(s+1)$ with strictly positive
weights.  If $u$ attained its maximum over $\{0,\dots,N\}$ at an interior
point $s$, then $u(s-1)=u(s)=u(s+1)$, and propagating this to the
boundary gives $\max u=u(0)=0$.  The same applies to $-u$, so $u\equiv0$.
Write
$\nabla(s)=\varphi(s)-\varphi(s-1)$.  The recursion is
$\nabla(s+1)=(\delta_s/\beta_s)\nabla(s)-1/\beta_s$; dividing by $\gamma_s$
and using~\eqref{eq:pgqg},
\[
\frac{\nabla(s+1)}{\gamma_s}=\frac{\nabla(s)}{\gamma_{s-1}}-\frac{1}{\beta_s\gamma_s},
\qquad\text{so}\qquad
\nabla(i+1)=\gamma_i\Bigl(\nabla(1)-\sum_{l=1}^{i}\frac{1}{\beta_l\gamma_l}\Bigr).
\]
Summing, and exchanging the order of summation in the double sum,
\[
\varphi(s)=\sum_{i=0}^{s-1}\nabla(i+1)
=\nabla(1)\Sigma_s-\sum_{l=1}^{s-1}\frac{\Sigma_s-\Sigma_l}{\beta_l\gamma_l}.
\]
Imposing $\varphi(N)=0$ gives
$\nabla(1)=\Sigma_N^{-1}\sum_{l=1}^{N-1}(\Sigma_N-\Sigma_l)/(\beta_l\gamma_l)$.
Substituting, the coefficient of $1/(\beta_j\gamma_j)$ is
$\Sigma_s(\Sigma_N-\Sigma_j)/\Sigma_N$ when $j\ge s$ and
$\bigl[\Sigma_s(\Sigma_N-\Sigma_j)/\Sigma_N\bigr]-(\Sigma_s-\Sigma_j)
=\Sigma_j(\Sigma_N-\Sigma_s)/\Sigma_N$ when $j\le s$.  These are the two
branches of the stated formula.
\end{proof}

\begin{remark}[Dictator sanity check]\label{rem:dictcheck}
For the dictator $x_j$ one has $g=\mathrm{id}$, so
$\beta_i=\delta_i=i(N-i)/N^2$, $\gamma\equiv1$ and $\Sigma_i=i$, and
Proposition~\ref{prop:green} becomes the discrete-Laplacian Green's
function computation carried out in Section~\ref{sec:dict}.
\end{remark}

Using~\eqref{eq:pgqg} we record two rewritings of $G$.  Set
\[
\Lambda_j=\frac{\Sigma_j}{\gamma_{j-1}}\;\ge\;1,\qquad
M_j=\frac{\Sigma_N-\Sigma_j}{\gamma_j}\;\ge\;1\qquad(1\le j\le N-1),
\]
the two inequalities holding because $\Sigma_j\ge\gamma_{j-1}$ and
$\Sigma_N-\Sigma_j\ge\gamma_j$.  Then for $j\le s$,
\begin{equation}\label{eq:Gleft}
G(s,j)\;=\;\Bigl(1-\frac{\Sigma_s}{\Sigma_N}\Bigr)\frac{\Lambda_j}{\delta_j},
\end{equation}
for $j\ge s$,
\begin{equation}\label{eq:Gright}
G(s,j)\;=\;\frac{\Sigma_s}{\Sigma_j}\,\widehat G(j),
\qquad
G(s,j)\;\le\;\frac{\Sigma_s}{\gamma_{j-1}}\cdot\frac{1}{\delta_j},
\end{equation}
and for all $s$ and $j$,
\begin{equation}\label{eq:Ghat}
G(s,j)\;\le\;\widehat G(j)\;:=\;G(j,j)\;=\;
\frac{\Sigma_j(\Sigma_N-\Sigma_j)}{\Sigma_N\,\beta_j\gamma_j}
\;\le\;\min\Bigl(\frac{\Lambda_j}{\delta_j},\ \frac{M_j}{\beta_j}\Bigr),
\end{equation}
because $\Sigma_{s\wedge j}\le\Sigma_j$,
$\Sigma_N-\Sigma_{s\vee j}\le\Sigma_N-\Sigma_j$, with equality in both
when $s=j$ (giving $\widehat G(j)=G(j,j)$), and
$\Sigma_j(\Sigma_N-\Sigma_j)/\Sigma_N\le\min(\Sigma_j,\Sigma_N-\Sigma_j)$.
In particular $\varphi(s)\le\sum_{j=1}^{N-1}\widehat G(j)$ for every $s$.

\subsection{Notation for the estimates}

Write $\pi=j/N$ and
\[
\beta(\pi)=(1-\pi)g(\pi),\qquad \delta(\pi)=\pi(1-g(\pi)),\qquad
R(\pi)=\frac{\delta(\pi)}{\beta(\pi)}=\frac{\pi(1-g(\pi))}{(1-\pi)g(\pi)},
\]
so that $\beta_j=\beta(j/N)$, $\delta_j=\delta(j/N)$ and
$\gamma_i=\prod_{l\le i}R(l/N)$.
Since $R-1=-h/\beta$, the sign of $R-1$ is opposite to the sign of $h$.
For two positive functions we write $u\asymp v$ when $c\,v\le u\le C\,v$ for
constants $0<c\le C<\infty$ depending only on $f$.  Harmonic numbers are
\[
H_n=\sum_{l=1}^{n}\frac1l\ \ (n\ge1),\qquad H_0=0 .
\]
Whenever $g$ has a unique interior fixed point $p_*$ and a half-width
$\eta\in\bigl(0,\min(p_*,1-p_*)/2\bigr)$ has been named, we write
\begin{equation}\label{eq:window}
i_0=\lfloor p_*N\rfloor,\qquad
L=\lfloor(p_*-\eta)N\rfloor,\qquad
U=\lceil(p_*+\eta)N\rceil
\end{equation}
for the centre and the two endpoints of the \emph{window} of half-width
$\eta$ at $p_*$.  The symbols $i_0$, $L$ and $U$ carry no other meaning
anywhere below; each result that uses them names the half-width it forms
them from.

The estimates are organised so that each group of symbols is settled in one
place.  Lemma~\ref{lem:harmonic} owns the elementary sums that both theorem
proofs need repeatedly; Lemma~\ref{lem:rates} owns the rates $\beta$ and
$\delta$; Lemmas~\ref{lem:scale} and~\ref{lem:degprofile} own the scale
function, for a non-degenerate and for a degenerate rule respectively;
Lemma~\ref{lem:duality} owns the reflection $f\mapsto f^*$.  The two
theorems combine their outputs through the Green's-function
identities~\eqref{eq:Gleft},~\eqref{eq:Gright} and~\eqref{eq:Ghat}.
Table~\ref{tab:notation} collects the running notation.

\begin{table}[!t]
\centering\small
\begin{tabular}{@{}lll@{}}
\toprule
symbol & what it is & where it is fixed \\
\midrule
$u\asymp v$ & $c\,v\le u\le C\,v$ with $c,C$ depending only on $f$
  & this subsection \\
$D_0$, $D_1$ & the endpoint degrees $g'(0)$ and $g'(1)$
  & \eqref{eq:endpointdeg} \\
$\varphi(s)$ & the expected consensus time $\E[T\mid S_0=s]$
  & Proposition~\ref{prop:green} \\
$d_f(s)$ & steps to the nearest attracting consensus
  & Definition~\ref{def:attracting} \\
$\beta_j$, $\delta_j$ & birth and death probabilities at $j$
  & Proposition~\ref{prop:bd} \\
$R(\pi)$ & the ratio $\delta(\pi)/\beta(\pi)$ & this subsection \\
$\gamma_j$ & the product $\prod_{l\le j}R(l/N)$ & before~\eqref{eq:pgqg} \\
$\Sigma_j$ & the scale function $\sum_{l<j}\gamma_l$ & before~\eqref{eq:pgqg} \\
$\Lambda_j$ & $\Sigma_j/\gamma_{j-1}\ge1$ & before~\eqref{eq:Gleft} \\
$\Psi$ & the potential $\int_0^\pi\log(\beta/\delta)$
  & Section~\ref{sec:potential} \\
$\mathcal W_j$ & the plateau width $\min(\Lambda_j,M_j)\ge1$
  & Section~\ref{sec:potential} \\
$\rho_s(j)$ & the damping factor of~\eqref{eq:decomp}, $\le1$
  & Section~\ref{sec:potential} \\
$M_j$ & $(\Sigma_N-\Sigma_j)/\gamma_j\ge1$ & before~\eqref{eq:Gleft} \\
$G(s,j)$ & Green's function of the chain killed at $\{0,N\}$
  & Proposition~\ref{prop:green} \\
$\widehat G(j)$ & an envelope for $G(\cdot,j)$: $G(s,j)\le\widehat G(j)$
  for every $s$ & \eqref{eq:Ghat} \\
$m_0$, $m_1$ & least minterm sizes of $f$ and of the dual $f^*$
  & Lemma~\ref{lem:rates} \\
$p_*$ & the interior fixed point of $g$, when there is one
  & Theorem~\ref{thm:fp} \\
$\eta$ & half-width of the window at $p_*$ & \eqref{eq:window} \\
$i_0$, $L$, $U$ & centre and the two endpoints of that window
  & \eqref{eq:window} \\
$\nu$ & a lower bound $\nu>1$ for $R$: on $(0,1)$ in~(b),
  left of the window in~(c) & Lemma~\ref{lem:scale} \\
$m$ & least minterm size of the residual rule $\tilde f$
  & Lemma~\ref{lem:degstruct} \\
\bottomrule
\end{tabular}
\caption{The running notation of this section, and where each item is
fixed.  The table is a reference for the whole of
Section~\ref{sec:binary}, not only for the subsection that contains it.
The further quantities $K$, $\pi_0$ and $c_0$ of the degenerate case are
fixed in Lemma~\ref{lem:degprofile}; of these, $K=N^{(m-1)/m}$ depends on
$N$ as well as on $f$, while $\pi_0$ and $c_0$ depend on $f$ alone.}
\label{tab:notation}
\end{table}

Throughout the rest of Section~\ref{sec:binary}, $c$ and $C$ denote positive
finite constants depending only on $f$, whose value may change from one
occurrence to the next; a constant that is referred to again later carries a
name.  No such constant is ever allowed to depend on $N$: a bound that holds
only for large $N$ is stated with the explicit hypothesis $N\ge N_0(f)$, where
$N_0(f)$ is a threshold depending only on $f$, and the finitely many smaller
$N$ are absorbed into $c$ and $C$ at the point where the bound is applied.
When a result introduces a parameter of its own, its threshold may depend on
that parameter as well and is then written out in full, as $N_0(f,\eta)$;
the bare $c$ and $C$ of its conclusion may not depend on the parameter, and a
constant that does carries a name recording the dependence, as $A_\nu$ does
in Lemma~\ref{lem:harmonic}(c).

We record \emph{absorbing the small $N$} once.  To prove a two-sided bound
between two finite positive quantities it suffices to prove it for
$N\ge N_0(f)$: the finitely many smaller $N$ contribute finitely many finite
positive values of the ratio, which enlarging $C$ and shrinking $c$ covers.
Every lemma whose conclusion needs $N\ge N_0(f)$ states it.

\subsection{The potential picture}
\label{sec:potential}

This subsection is a guide to the trichotomy of
Theorem~\ref{thm:trichotomy} and to Lemmas~\ref{lem:rates},~\ref{lem:scale}
and~\ref{lem:degprofile}, not a formal ingredient in their proofs.
We trace, step by step, how $\varphi(s)=\E[T\mid S_0=s]$, expressed as
a sum of Green's-function values by Proposition~\ref{prop:green}, is
controlled by a single scalar function $\Psi$, and how the local behaviour
of $\Psi$ near its boundary zeros determines the three consensus-time regimes.

\paragraph{Step 1: Reducing to the envelope.}
Proposition~\ref{prop:green} gives $\varphi(s)=\sum_{j=1}^{N-1}G(s,j)$,
where (from~\eqref{eq:Ghat}) each $G(s,j)$ is bounded above by the
\emph{envelope}
\[
\widehat G(j)\;=\;\frac{\Sigma_j(\Sigma_N-\Sigma_j)}{\Sigma_N\,\beta_j\gamma_j}.
\]
Dividing $G(s,j)$ by $\widehat G(j)$ and reading off the ratio from
Proposition~\ref{prop:green} yields the decomposition
\begin{equation}\label{eq:decomp}
\varphi(s)\;=\;\sum_{j=1}^{N-1}\rho_s(j)\,\widehat G(j),
\qquad
\rho_s(j)\;=\;
\begin{cases}
\dfrac{\Sigma_N-\Sigma_s}{\Sigma_N-\Sigma_j}, & j\le s,\\[8pt]
\dfrac{\Sigma_s}{\Sigma_j}, & j\ge s,
\end{cases}
\end{equation}
where $0\le\rho_s(j)\le1$ and $\rho_s(s)=1$.  By a standard gambler's-ruin
calculation on the birth--death chain, $\rho_s(j)$ equals the probability
that the chain started at~$s$ visits~$j$ before being absorbed at the far
boundary (at $N$ if $j<s$, at $0$ if $j>s$).  The decomposition splits the
problem: $\widehat G(j)$ is an $s$-independent envelope, while $\rho_s$
records how far from~$s$ the chain can reach.

The dominant factor in $\widehat G$ is $\gamma_j$.
From~\eqref{eq:pgqg}, $\beta_j\gamma_j=\delta_j\gamma_{j-1}$, so the denominator
factors as
\begin{equation}\label{eq:parallel}
\frac{1}{\widehat G(j)}\;=\;
\frac{\Sigma_N\,\delta_j\gamma_{j-1}}{\Sigma_j(\Sigma_N-\Sigma_j)}
\;=\;\delta_j\gamma_{j-1}\!\left(\frac{1}{\Sigma_j}+\frac{1}{\Sigma_N-\Sigma_j}\right),
\end{equation}
and therefore
\begin{equation}\label{eq:Gmin}
\widehat G(j)\;\asymp\;\frac{\min(\Sigma_j,\,\Sigma_N-\Sigma_j)}{\delta_j\gamma_{j-1}}.
\end{equation}
The identity~\eqref{eq:pgqg} is what makes both summands in~\eqref{eq:parallel}
share the single factor $\delta_j\gamma_{j-1}$, and hence why $\gamma$ is the
key object: $\widehat G(j)$ is large precisely where $\gamma_{j-1}$ is small,
and $\Sigma_j=\sum_{l<j}\gamma_l$ and $\Sigma_N-\Sigma_j=\sum_{l\ge j}\gamma_l$
are themselves sums of $\gamma$-values whose size we need to understand.

\paragraph{Step 2: The potential function.}
Recall $\gamma_j=\prod_{l=1}^j R(l/N)$ where $R(\pi)=\delta(\pi)/\beta(\pi)$.
Since $\log\gamma_j=\sum_{l=1}^j\log R(l/N)$ is $N$ times a Riemann sum for
$\int_0^{j/N}\log R(u)\,du$, we define the \emph{potential}
\[
\Psi(\pi)\;=\;\int_0^{\pi}\log\frac{\beta(u)}{\delta(u)}\,du
\;=\;-\int_0^\pi\log R(u)\,du,
\]
finite on $[0,1]$ (the integrand has at worst a logarithmic singularity at
each endpoint).  Then
\begin{equation}\label{eq:gammaPsi}
\gamma_j\;=\;\exp\bigl(-N\,\Psi(j/N)+O(\log N)\bigr).
\end{equation}
The $O(\log N)$ error arises from endpoint contributions to the Euler--Maclaurin
formula; the proofs of Lemmas~\ref{lem:scale} and~\ref{lem:degprofile}
work directly with $\gamma_j$ to avoid it.

\paragraph{Step 3: What $\Psi$ encodes.}
Three facts determine the shape of $\Psi$ completely.

\emph{Sign.}  Since $\Psi'(\pi)=\log(\beta(\pi)/\delta(\pi))$,
we have $\operatorname{sgn}\Psi'=\operatorname{sgn}\,h$ where $h=\beta-\delta$.
By Corollary~\ref{cor:sign}, $h$ changes sign \emph{at most once} in $(0,1)$:
where $h<0$, $\Psi$ decreases; where $h>0$, $\Psi$ increases.
If $h$ does not change sign, $\Psi$ is monotone.
If $h$ has an interior zero $p_*$ (so $\beta(p_*)=\delta(p_*)$), then $\Psi$
has a critical point there; the Bernoulli--Poincar\'e inequality
(Theorem~\ref{thm:fp}) gives $\Psi''(p_*)=h'(p_*)/\beta(p_*)>0$, so it is a
non-degenerate \emph{minimum} of $\Psi$.

\emph{Endpoint slopes.}  As $\pi\downarrow0$, $\delta(\pi)\sim\pi$ and
$\beta(\pi)/\delta(\pi)\to D_0$, so $\Psi'(0)=\log D_0$
(convention: $\log 0=-\infty$).  Symmetrically $\Psi'(1)=-\log D_1$, giving
\begin{equation}\label{eq:PsiEnds}
\Psi'(0)\;=\;\log D_0,\qquad \Psi'(1)\;=\;-\log D_1.
\end{equation}
Thus $\Psi'$ has a zero at an endpoint precisely when the endpoint degree
equals~$1$; Theorem~\ref{thm:trichotomy} is a count of these boundary zeros:
none in regime~(i), one in regime~(iii), both in regime~(ii).

\emph{Interior curvature.}  At $p_*$, $\beta(p_*)=\delta(p_*)$ so
$\Psi'(p_*)=0$, and the Bernoulli Poincar\'e inequality (Theorem~\ref{thm:fp})
gives $\Psi''(p_*)=h'(p_*)/\beta(p_*)>0$: the interior critical point is
always a non-degenerate minimum of $\Psi$.

\paragraph{Step 4: Approximating $\Sigma_j$ and $\Sigma_N-\Sigma_j$.}
Both are sums of $\gamma$-values.  Dividing by the reference value $\gamma_{j-1}$
(resp.\ $\gamma_j$) and using~\eqref{eq:gammaPsi} with $\pi=j/N$ gives
\begin{equation}\label{eq:LamInt}
\frac{\Sigma_j}{\gamma_{j-1}}
\;\approx\;\sum_{l=0}^{j-1}e^{-N(\Psi(l/N)-\Psi(\pi))}
\;\approx\;N\!\int_{0}^{\pi}\!e^{-N(\Psi(v)-\Psi(\pi))}\,dv,
\qquad
\frac{\Sigma_N-\Sigma_j}{\gamma_j}
\;\approx\;N\!\int_{\pi}^{1}\!e^{-N(\Psi(v)-\Psi(\pi))}\,dv.
\end{equation}
Consecutive terms in these sums satisfy $\text{term}_{l+1}/\text{term}_l
\approx e^{-\Psi'(\pi)}$.  The dominant term of $\Sigma_j/\gamma_{j-1}$ is the
\emph{rightmost} ($l=j-1$, value~$\approx1$): when $\Psi'(\pi)>0$ the terms
decrease as $l$ falls from $j-1$, so the sum is geometric and
$\Sigma_j/\gamma_{j-1}\asymp1$.  The dominant term of $(\Sigma_N-\Sigma_j)/\gamma_j$
is the \emph{leftmost} ($l=j$, value~$\approx1$): when $\Psi'(\pi)<0$ that
sum is geometric and $(\Sigma_N-\Sigma_j)/\gamma_j\asymp1$.  In summary,
when $|\Psi'(\pi)|\ge c>0$,
\begin{equation}\label{eq:noplateau}
\frac{\Sigma_j}{\gamma_{j-1}}\asymp1\ \text{ where }\Psi'(\pi)\ge c,
\qquad
\frac{\Sigma_N-\Sigma_j}{\gamma_j}\asymp1\ \text{ where }\Psi'(\pi)\le-c.
\end{equation}

When $\Psi'(\pi)\approx0$ the terms vary slowly and the integral
approximation in~\eqref{eq:LamInt} applies.  Whichever of the two intervals
$[0,\pi]$ or $[\pi,1]$ is on the side \emph{away from} the (nearest) minimum
of $\Psi$ has $\Psi(v)\ge\Psi(\pi)$ throughout: on that side the integrand
is at most~$1$ and is $\asymp1$ only in the \emph{slab} $\{v:\Psi(v)\le\Psi(\pi)+\frac1N\}$.
Outside the slab the integrand is at most $e^{-1}$, so the integral is
$\asymp N$ times the slab length (the factor $N$ converts slab measure to a
Riemann-sum count of lattice points), giving
\begin{equation}\label{eq:plateau}
\frac{\Sigma_j}{\gamma_{j-1}}\;\approx\;N\cdot\bigl|\{v<\pi:\Psi(v)\le\Psi(\pi)+\tfrac1N\}\bigr|
\quad\text{(when $[0,\pi]$ is the side away from the minimum),}
\end{equation}
symmetrically for $(\Sigma_N-\Sigma_j)/\gamma_j$ when $[\pi,1]$ is that side.
On the opposite interval $\Psi$ dips below $\Psi(\pi)$, so that integral exceeds
the slab and is the \emph{larger} of the two.  Define the \emph{plateau width}
\[
\mathcal W_j\;:=\;\min\!\left(\frac{\Sigma_j}{\gamma_{j-1}},\,
  \frac{\Sigma_N-\Sigma_j}{\gamma_j}\right)\;\ge\;1
\]
(each ratio is $\ge1$ since $\Sigma_j\ge\gamma_{j-1}$ and $\Sigma_N-\Sigma_j\ge\gamma_j$);
it equals the slab count on the side away from the minimum.

\paragraph{Step 5: The master estimate.}
From~\eqref{eq:Gmin} and the definition of $\mathcal W_j$, two cases arise:
\begin{itemize}
\item \emph{$\Sigma_j/\gamma_{j-1}\le(\Sigma_N-\Sigma_j)/\gamma_j$
  (left branch active):}
$\widehat G(j)\asymp\Sigma_j/(\delta_j\gamma_{j-1})
=(\Sigma_j/\gamma_{j-1})/\delta_j=\mathcal W_j/\delta_j$.
Near $j=0$, $\delta_j\asymp j/N$, so $\widehat G(j)\asymp N\mathcal W_j/j$.
\item \emph{$(\Sigma_N-\Sigma_j)/\gamma_j\le\Sigma_j/\gamma_{j-1}$
  (right branch active):}
$\widehat G(j)\asymp(\Sigma_N-\Sigma_j)/(\delta_j\gamma_{j-1})
=(\Sigma_N-\Sigma_j)/(\beta_j\gamma_j)=\mathcal W_j/\beta_j$,
using $\delta_j\gamma_{j-1}=\beta_j\gamma_j$.
The right branch is active near $j=0$ only when $\Psi'(0)\ge0$
(i.e.\ $D_0\ge1$, by~\eqref{eq:PsiEnds}); then $\beta_j\asymp D_0\cdot j/N\asymp j/N$,
so again $\widehat G(j)\asymp N\mathcal W_j/j$.
\end{itemize}
In both cases, and symmetrically near $j=N$,
\begin{equation}\label{eq:master}
\widehat G(j)\;\asymp\;\frac{N\,\mathcal W_j}{\min(j,N-j)}.
\end{equation}
Substituting into~\eqref{eq:decomp},
\[
\varphi(s)\;\approx\;
N\int_0^1\frac{\rho_s(\pi)\,\mathcal W(\pi)}{\min(\pi,1-\pi)}\,d\pi:
\]
the expected absorption time is $N$ times the integral of the plateau
width against the harmonic weight $1/\min(\pi,1-\pi)$.

Whether the outcome is a logarithm or a power of $N$ turns on the dyadic
structure of $\widehat G$: when $\mathcal W_j\asymp1$ the blocks
$[2^k,2^{k+1})$ all contribute $\asymp N$, summing to $N\log N$ over
$O(\log N)$ comparable blocks; when $\mathcal W_j$ grows or decays as a
power of $j$ (or $N/j$), the blocks are geometric and a single scale dominates.

\paragraph{Step 6: The four cases.}
By~\eqref{eq:PsiEnds}, $\Psi'$ vanishes at an endpoint precisely when the
endpoint degree equals~$1$.  The only other available zero is the interior
$p_*$ (when $D_0=D_1=0$, Lemma~\ref{lem:endpoint}(iii)).  This yields four
cases.

\begin{itemize}[itemsep=4pt]
\item \emph{No zero of $\Psi'$ in $[0,1]$
  (cases~(a),(b) of Corollary~\ref{cor:sign}).}
By~\eqref{eq:noplateau}, $\mathcal W_j\asymp1$ everywhere, so
$\widehat G(j)\asymp N/\min(j,N-j)$.  The dyadic blocks $[2^k,2^{k+1})$
and their mirrors each contribute $\asymp N$; the blocks are comparable, and
the weight $\rho_s\asymp1$ towards the attracting endpoint selects the
$\asymp1+\log d_f(s)$ blocks in the range $[1,d_f(s)]$.
Total: $\varphi(s)\asymp N(1+\log d_f(s))$, i.e.\ regime~(i).

\item \emph{Interior zero $\Psi'(p_*)=0$, $\Psi''(p_*)>0$
  (case~(c) of Corollary~\ref{cor:sign}).}
Since $\Psi(v)-\Psi(p_*)\asymp(v-p_*)^2$, the slab of height $1/N$ has
half-width $\asymp N^{-1/2}$, giving
$\mathcal W_j\asymp\min(\sqrt N,N/|j-i_0|)$ near $i_0=\lfloor p_*N\rfloor$.
With $j\asymp N$ in the window, $\widehat G(j)\asymp\mathcal W_j$, and
summing over dyadic shells of $|j-i_0|$:
\[
\underbrace{\sqrt N\cdot\sqrt N}_{|j-i_0|\le\sqrt N}
+\sum_{\sqrt N<|j-i_0|\le\eta N}\frac{N}{|j-i_0|}
\;\asymp\;N\log N.
\]
The window contributes the same order as the harmonic body, so the total
is still $N\log N$: this is regime~(i).  (The window is only seen by
initial states $s$ inside it, for which $d_f(s)\asymp N$ anyway.)

\item \emph{One boundary zero of $\Psi'$, say $\Psi'(0)=0$ ($D_0=1$).}
As $\pi\downarrow0$, Lemma~\ref{lem:degstruct} gives $\Psi'(\pi)\asymp\pi^{m-1}$
and $\Psi(\pi)\asymp\pi^m$, where $m\ge2$ is the least minterm size of the
residual rule.  The exponential barrier $N\Psi(j/N)\asymp j^m/N^{m-1}$ first
reaches order~$1$ at the \emph{crossover scale} $K=N^{(m-1)/m}$.
From~\eqref{eq:plateau}, on the left half $j\le N/2$:
\[
\mathcal W_j\;\asymp\;
\begin{cases}
j, & j\le K,\\[2pt]
(N/j)^{m-1}, & K\le j\le N/2,
\end{cases}
\qquad
\widehat G(j)\;\asymp\;
\begin{cases}
N, & j\le K,\\[2pt]
(N/j)^{m}, & K\le j\le N/2.
\end{cases}
\]
Below $K$ the barrier $N\Psi(j/N)\ll1$ and the left slab covers all of
$[0,\pi]$, giving $\mathcal W_j=\Sigma_j/\gamma_{j-1}\asymp j$.  Above $K$ the barrier
suppresses $\Sigma_j/\gamma_{j-1}$, the active branch switches to $(\Sigma_N-\Sigma_j)/\gamma_j$, and the slab
width $\asymp1/(N\Psi'(\pi))\asymp(N/j)^{m-1}/N$ gives
$\mathcal W_j=(\Sigma_N-\Sigma_j)/\gamma_j\asymp(N/j)^{m-1}$.  Summing:
\[
\sum_{j\le N/2}\widehat G(j)\;\asymp\;NK+N^m K^{1-m}\;\asymp\;NK
\;=\;N^{2-1/m},
\]
the two ranges contributing equally.  The blocks are geometric on both sides
of $K$ (growing as $Nj$ below, decaying as $N^m j^{-m}$ above), so the
single scale $j\asymp K$ carries the mass.  Since $D_0=1$ forces $h>0$
(Lemma~\ref{lem:endpoint}(i)), the chain is pushed away from~$0$, and the
worst initial state is $s\asymp K$.

\item \emph{Both boundary zeros of $\Psi'$ ($D_0=D_1=1$, dictator).}
Here $\beta\equiv\delta$, so $\Psi\equiv0$: no exponential weighting and
$\mathcal W_j=\min(j,N-j)$.  This is the previous case with $K=N$,
giving $\widehat G(j)\asymp N$ uniformly and
$\sum_j\widehat G(j)\asymp N^2$.  With $\gamma\equiv1$, $\Sigma_j=j$, and
$\rho_s(j)=s/j$ for $j\ge s$: a power-law weight, not a geometric cut-off.
Since $\Psi\equiv0$ the integrals~\eqref{eq:LamInt} can be evaluated exactly:
\[
\varphi(s)\;\approx\;
N^{2}\!\left[(1-p_0)\!\int_0^{p_0}\!\frac{d\pi}{1-\pi}
+p_0\!\int_{p_0}^1\!\frac{d\pi}{\pi}\right]
\;=\;N^{2}\Ent(p_0),
\]
where $\Ent(p)=-p\log p-(1-p)\log(1-p)$, recovering Theorem~\ref{thm:bin-dict}
up to the $O(N)$ of Corollary~\ref{cor:dict-asymp}(a).
\end{itemize}

Table~\ref{tab:plateau} collects the four cases.
\begin{table}[!t]
\centering\small
\begin{tabular}{@{}lllll@{}}
\toprule
zeros of $\Psi'$ & plateau $\mathcal W_j\asymp$ & mass carried at
  & $\sum_j\widehat G(j)$ & regime \\
\midrule
none, $|\Psi'|\ge c$ & $1$ & all scales $j\le N/2$ & $N\log N$ & (i) \\
$p_*$, simple & $\min\bigl(\sqrt N,N/(|j-i_0|+1)\bigr)$
  & all scales of $|j-i_0|$ & $N\log N$ & (i) \\
one endpoint, $\Psi\asymp\pi^m$ & $\min\bigl(j,(N/j)^{m-1}\bigr)$
  & $j\asymp K=N^{(m-1)/m}$ & $N^{2-1/m}$ & (iii) \\
everywhere, $\Psi\equiv0$ & $\min(j,N-j)$ & $j\asymp N$ & $N^{2}$ & (ii) \\
\bottomrule
\end{tabular}
\caption{The four profiles of Section~\ref{sec:potential}, the third row
written for $D_0=1$ and reflected in $j\mapsto N-j$ when $D_1=1$.
The table is the envelope $\widehat G$, which does not see the initial
state; $\rho_s$ then selects the scales that a given~$s$ actually sees.}
\label{tab:plateau}
\end{table}

\begin{remark}[A parallel singularity in the mean-field equation]\label{rem:mfe-singularity}
The third row of Table~\ref{tab:plateau} is a discrete instance of a
phenomenon already identified directly on the mean-field/diffusion
approximation.  Doering, Sargsyan and Sander~\cite{DSS04} solve a
related class of birth-death chains exactly and compare the result
with the naive Fokker--Planck limit of the same master equation; they
find that the two agree only where the deterministic drift is
uniformly small, and that at a boundary where the drift and its
derivative vanish together -- their threshold case, the analogue of
our $\Psi'(0)=0$ -- the exact chain picks up an anomalous power-law
term that the naive continuum equation cannot see.  Hathcock and
Strogatz~\cite{HS22} isolate a one-parameter family of birth-death
chains whose rates vanish at a boundary at a tunable polynomial rate
and obtain a corresponding family of absorption-time universality
classes, indexed by the same kind of exponent that our minterm size
$m$ supplies in Lemma~\ref{lem:degprofile}.  In both cases, as here,
the anomalous exponent is a genuinely discrete phenomenon that a
naive continuum limit does not reproduce on its own.
\end{remark}

\subsection{Estimates on the scale function}

The first lemma collects three elementary sums used repeatedly below.

\begin{lemma}[Harmonic sums]\label{lem:harmonic}
Let $N\ge2$ and let $s$ and $v$ denote integers.
\begin{enumerate}[label=(\alph*),itemsep=2pt]
\item For $s\ge1$,
$\;H_s\ge\max(1,\log s)\ge\frac12\bigl(1+\log s\bigr)$ and
$H_s\le1+\log s$.
\item For $1\le s\le N-1$,
\[
\sum_{1\le j<s}\frac{1}{N-j}\;\le\;\log\frac{N}{N-s}\;\le\;2\bigl(1+\log s\bigr).
\]
\item Let $\nu>1$.  Then
$\displaystyle\sum_{u=1}^{v}\frac{\nu^{-(v-u)}}{u}\;\le\;A_\nu$ for every
$v\ge1$, with $A_\nu<\infty$ depending only on $\nu$.
\end{enumerate}
\end{lemma}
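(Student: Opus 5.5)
We take the three parts in turn; each reduces to comparing a sum with an integral or to a geometric-series bound.

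For part (a), we use $H_s=\sum_{l=1}^s 1/l$. The upper bound $H_s\le 1+\log s$ follows from $1/l\le\int_{l-1}^{l}dx/x$ for $l\ge2$, which gives $H_s\le 1+\int_1^s dx/x$. For the lower bound, $H_s\ge1$ is trivial. Also $H_s\ge\int_1^{s+1}dx/x=\log(s+1)\ge\log s$, because $1/l\ge\int_l^{l+1}dx/x$. Finally, $\max(1,\log s)\ge\frac12(1+\log s)$ holds because the maximum of two numbers dominates their average.

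For part (b), we compare with an integral again. Since $1/(N-j)\le\int_{j}^{j+1}dx/(N-x)$ (the integrand is increasing in $x$), summing over $1\le j<s$ gives
\[
\sum_{1\le j<s}\frac{1}{N-j}\;\le\;\int_1^{s}\frac{dx}{N-x}\;=\;\log\frac{N-1}{N-s}\;\le\;\log\frac{N}{N-s}.
\]
The second inequality, $\log\frac{N}{N-s}\le 2(1+\log s)$, is the only step needing care, since the left side depends on $N$. We split according to whether $s\le N/2$.
\begin{itemize}
\item If $s\le N/2$, then $\log\frac{N}{N-s}\le\log 2\le 2$.
\item If $s>N/2$, then $N-s\ge1$ gives $\log\frac{N}{N-s}\le\log N<\log(2s)=\log2+\log s\le 2(1+\log s)$.
\end{itemize}
This completes (b).

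For part (c), we split the sum at $u=v/2$. On $u\le v/2$ we have $v-u\ge v/2$ and $1/u\le1$. This range therefore contributes at most $\sum_{u\le v/2}\nu^{-(v-u)}\le \nu^{-v/2}\sum_{k\ge0}\nu^{-k}=\nu^{-v/2}\,\nu/(\nu-1)$, which is at most $\nu/(\nu-1)$. On $u>v/2$ we have $1/u<2/v\le 2$. This range contributes at most $2\sum_{k\ge0}\nu^{-k}=2\nu/(\nu-1)$. Hence we may take $A_\nu=3\nu/(\nu-1)$. A cruder route also works: bound $1/u\le1$ throughout and sum the geometric series directly, which gives $A_\nu=\nu/(\nu-1)$.

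None of these steps is a real obstacle. The only point needing attention is the uniformity in $N$ in part (b), which the case split $s\le N/2$ versus $s>N/2$ handles.
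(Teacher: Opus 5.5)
Your proof is correct. Parts (a) and (b) follow the paper's argument. In (a) you use the same integral comparisons for $H_s$. In (b) you use the same integral bound (the paper first substitutes $k=N-j$, you integrate $1/(N-x)$ directly), followed by the same case split at $s=N/2$. In the case $s\le N/2$ you leave implicit the last step $2\le 2(1+\log s)$, which uses $s\ge 1$.

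In (c), your first argument is the paper's split at $u=v/2$, but with the weight $1/u$ discarded on both halves. Your ``cruder route'' shows the split is unnecessary: $1/u\le 1$ plus the geometric series gives $A_\nu=\nu/(\nu-1)$ in one line, a better constant than your split bound $3\nu/(\nu-1)$. The paper's split proves more than the lemma states. Keeping $1/u<2/v$ on the upper half and $H_v$ on the lower half, it shows the sum is at most $\nu^{-v/2}(1+\log v)+\frac{2}{v}\cdot\frac{\nu}{\nu-1}$, so the sum actually tends to $0$ as $v\to\infty$. Only boundedness is used later, in the upper bound for case (b) of Theorem~\ref{thm:bin-nondict}, so your one-line bound suffices.
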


\begin{proof}
(a)  From $H_s\ge H_1=1$ and
$H_s\ge\int_1^{s+1}\frac{dx}{x}=\log(s+1)\ge\log s$ we get
$H_s\ge\max(1,\log s)\ge\frac12(1+\log s)$, the last step because
$\max(1,t)-\frac{1+t}{2}=\frac{|1-t|}{2}\ge0$; and
$H_s\le1+\int_1^{s}\frac{dx}{x}=1+\log s$.

(b)  Substituting $k=N-j$, the sum equals $\sum_{k=N-s+1}^{N-1}\frac1k
\le\int_{N-s}^{N-1}\frac{dx}{x}\le\log\frac{N}{N-s}$, the hypothesis
$s\le N-1$ giving $N-s\ge1$.  For the second inequality: if $s\le N/2$ then
$N-s\ge N/2$ and $\log\frac{N}{N-s}\le\log2\le2\le2(1+\log s)$; if $s>N/2$
then $\log\frac{N}{N-s}\le\log N$ because $N-s\ge1$, while
$\log s\ge\log N-\log2$, so
$2(1+\log s)\ge2+2\log N-2\log2\ge\log N$, the last step because
$\log N\ge0\ge2\log2-2$.

(c)  Split the sum at $u=v/2$.  For $u\le v/2$ we have
$\nu^{-(v-u)}\le\nu^{-v/2}$, so those terms contribute at most
$\nu^{-v/2}H_v$.  For $u>v/2$ we have $1/u<2/v$, so those terms contribute
at most $\frac2v\sum_{n\ge0}\nu^{-n}=\frac2v\cdot\frac{\nu}{\nu-1}$.  Hence
the sum is at most
$\nu^{-v/2}(1+\log v)+\frac{2}{v}\cdot\frac{\nu}{\nu-1}$ by~(a), and both
terms are bounded over $v\ge1$ by a quantity depending only on $\nu$, the
first because $\nu^{-v/2}(1+\log v)\to0$ as $v\to\infty$.
\end{proof}

\begin{lemma}[Sizes of the rates]\label{lem:rates}
Let $f$ be monotone and non-constant, and let $m_0$ be the least size of a
minterm of $f$ and $m_1$ the least size of a minterm of the dual $f^*$.
Then $m_0=1$ if and only if $D_0\ge1$, and $m_1=1$ if and only if
$D_1\ge1$; moreover, uniformly on $[0,1]$,
\[
g(\pi)\asymp\pi^{m_0},\qquad 1-g(\pi)\asymp(1-\pi)^{m_1},\qquad
\beta(\pi)\asymp(1-\pi)\pi^{m_0},\qquad \delta(\pi)\asymp\pi(1-\pi)^{m_1}.
\]
Consequently:
\begin{enumerate}[label=(\alph*),itemsep=2pt]
\item if $D_0\ge1$ then $g(\pi)\ge\pi$ on $[0,1]$, and hence
\[
\frac{1}{\beta_j}\;\le\;\frac{N^2}{j(N-j)}\qquad(1\le j\le N-1);
\]
\item if $m_1=1$, that is if $D_1\ge1$, then
\[
\frac{1}{\delta_j}\;\asymp\;\frac{N^2}{j(N-j)}\qquad(1\le j\le N-1);
\]
\item if $m_0\ge2$ and $m_1\ge2$, so that $D_0=D_1=0$ and $g$ has a unique
interior fixed point $p_*$, then for every half-width $\eta$ with
$0<\eta<\min(p_*,1-p_*)/2$ there is $N_0(f,\eta)$ such that for
$N\ge N_0(f,\eta)$ the window~\eqref{eq:window} of half-width $\eta$
satisfies $1\le L<U\le N-1$ and
\[
\frac{1}{\delta_j}\;\asymp\;\frac{N}{j}\ \ (1\le j\le U),
\qquad
\frac{1}{\beta_j}\;\asymp\;\frac{N}{N-j}\ \ (L\le j\le N-1),
\]
while both $1/\delta_j$ and $1/\beta_j$ are at most $C$ for $L<j<U$.  Here
the threshold $N_0(f,\eta)$ depends on $\eta$, but the constants implicit in
the two relations $\asymp$ and the constant $C$ do not: all three may be
taken to depend on $f$ alone, uniformly over
$\eta\in\bigl(0,\min(p_*,1-p_*)/2\bigr)$.
\end{enumerate}
\end{lemma}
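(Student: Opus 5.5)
The plan is to read everything off the polynomial expansion $g(\pi)=\sum_x f(x)\pi^{|x|}(1-\pi)^{r-|x|}$ together with duality. For the first claim, $m_0=1$ means some singleton $\{i\}$ is a minterm, that is $f(e_i)=1$, which is exactly $D_0\ge1$. Applying this to $f^*$, whose minterms of size one are the $i$ with $f^*(e_i)=1$, i.e.\ $f(\mathbf 1-e_i)=0$, gives $m_1=1\iff D_1\ge1$.

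For $g(\pi)\asymp\pi^{m_0}$ I will prove the two bounds separately.
\begin{itemize}
\item \emph{Lower bound.} Pick a minterm $B$ with $|B|=m_0$. By monotonicity $g(\pi)\ge\Pp_\pi[x_B\equiv1]=\pi^{m_0}$.
\item \emph{Upper bound.} Every $x$ with $f(x)=1$ has $|x|\ge m_0$, so each term in the expansion is at most $\pi^{m_0}$. Summing the at most $2^r$ terms gives $g(\pi)\le2^r\pi^{m_0}$.
\end{itemize}
The same argument applied to $f^*$, using $1-g(\pi)=g_{f^*}(1-\pi)$, gives $1-g(\pi)\asymp(1-\pi)^{m_1}$. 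Multiplying by $(1-\pi)$ and by $\pi$ respectively yields the stated forms of $\beta$ and $\delta$, uniformly on $[0,1]$ with constants $1$ and $2^r$.

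\textbf{Part (a).} If $D_0\ge1$ then $f\ge x_j$ by Lemma~\ref{lem:endpoint}(i), or directly by the monotonicity argument there, which does not need $f$ to be a non-dictator. Hence $g(\pi)\ge\pi$. Then $\beta_j=(1-\pi)g(\pi)\ge\pi(1-\pi)=j(N-j)/N^2$, which is the stated bound.

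\textbf{Part (b).} With $m_1=1$ we get $\delta(\pi)\asymp\pi(1-\pi)$, which is immediate from the displayed asymptotics.

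\textbf{Part (c).} Here $m_0,m_1\ge2$, so $D_0=D_1=0$. Lemma~\ref{lem:endpoint}(iii) with Corollary~\ref{cor:sign} then gives the unique $p_*$; a dictator is excluded since it has $D_0=1$.
\begin{itemize}
\item \emph{Window inside $[1,N-1]$.} Because $\eta<\min(p_*,1-p_*)/2$, the window endpoints satisfy $p_*\pm\eta\in[p_*/2,\,1-(1-p_*)/2]$. So $L\ge1$ and $U\le N-1$ once $N$ exceeds a threshold depending on $\eta$ (and $p_*$). The inequality $L<U$ needs only $2\eta N>2$, i.e.\ $N\ge N_0(f,\eta)$.
\item \emph{The bound on $1/\delta_j$.} For $j\le U$ we have $\pi\le p_*+\eta\le(1+p_*)/2=:\bar p<1$, a point depending on $f$ only. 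On $[0,\bar p]$ the factor $(1-\pi)^{m_1}$ lies between $(1-\bar p)^{m_1}$ and $1$, so $\delta(\pi)\asymp\pi$ and $1/\delta_j\asymp N/j$, with constants depending on $f$ alone.
\item \emph{The bound on $1/\beta_j$.} Symmetrically, for $j\ge L$ we have $\pi\ge p_*/2$, so $\pi^{m_0}\ge(p_*/2)^{m_0}$ and $1/\beta_j\asymp N/(N-j)$.
\item \emph{Inside the window.} For $L<j<U$ the point $\pi$ lies in $[p_*/2,(1+p_*)/2]$. There $\delta$ and $\beta$ are bounded below by constants depending only on $f$, which gives the final bound $C$.
\end{itemize}

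The main obstacle is the bookkeeping in (c): one must check that the $\asymp$ constants and $C$ are controlled through the $\eta$-independent enlarged interval $[p_*/2,(1+p_*)/2]$, while only the threshold $N_0$, which is needed for $1\le L<U\le N-1$, depends on $\eta$. Choosing that fixed outer interval at the start makes the required uniformity in $\eta$ automatic.
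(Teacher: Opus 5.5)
Your proof is correct. Parts (a)--(c) follow the paper's argument essentially step for step. The genuine difference is in how you get the basic asymptotics and the equivalence $m_0=1\iff D_0\ge1$.

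The paper expands $g$ in powers of $\pi$. It notes that the coefficient of $\pi^n$ vanishes for $n<m_0$ and that the coefficient of $\pi^{m_0}$ is the number of minterms of size $m_0$. It then writes $g=\pi^{m_0}\mathsf g_0$ with $\mathsf g_0$ a polynomial positive on $[0,1]$, concludes by continuity and compactness, and reads off $D_0=g'(0)=\mathsf g_0(0)\mathbf 1\{m_0=1\}$.

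You instead prove the explicit sandwich $\pi^{m_0}\le g(\pi)\le 2^r\pi^{m_0}$. The lower bound comes from monotonicity above a smallest minterm, the upper bound from the fact that every $x$ with $f(x)=1$ has $|x|\ge m_0$. You obtain $m_0=1\iff D_0\ge1$ combinatorially, from the fact that $\{i\}$ is a minterm exactly when $f(e_i)=1$ (this uses $f(\mathbf 0)=0$).

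Your route is more elementary and gives constants depending only on $r$, with no continuity or compactness step. The paper's factorization identifies the leading coefficient, so it also gives the exact first-order behaviour $g(\pi)\sim\mathsf g_0(0)\,\pi^{m_0}$ as $\pi\downarrow0$ and the direct link to $g'(0)$.

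There is one small bookkeeping slip in (c), at the endpoints $j=U$ and $j=L$ of the two $\asymp$ ranges:
\begin{itemize}
\item For $j\le U$ you assert $\pi\le p_*+\eta$, but $U=\lceil(p_*+\eta)N\rceil$ can exceed $(p_*+\eta)N$.
\item Likewise $L/N$ can be as small as $p_*-\eta-1/N$, which for some $N$ lies below $p_*/2$ when $\eta$ is close to $p_*/2$.
\end{itemize}
Both are repaired within your framework. Since $p_*-\eta>p_*/2$ and $p_*+\eta<(1+p_*)/2$ strictly, taking $N_0(f,\eta)\ge\max\bigl\{(p_*/2-\eta)^{-1},\,((1-p_*)/2-\eta)^{-1}\bigr\}$ gives $L/N\ge p_*/2$ and $U/N\le(1+p_*)/2$. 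So your $\eta$-free outer interval, and with it the $\eta$-free constants, survive. The paper sidesteps the issue by using the looser margins $p_*/4$ and $(1-p_*)/4$, which absorb the rounding once $N\ge\max\{4/p_*,\,4/(1-p_*)\}$.

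Your bound inside the window, for $L<j<U$, needs no such correction. Also, $L<U$ holds for every $N$, since $L\le(p_*-\eta)N<(p_*+\eta)N\le U$; the condition $2\eta N>2$ is unnecessary.
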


\begin{proof}
Write $g(\pi)=\sum_x f(x)\pi^{|x|}(1-\pi)^{r-|x|}$ and expand in powers of
$\pi$.  The coefficient of $\pi^n$ vanishes for $n<m_0$ and equals the
number of minterms of size $m_0$ for $n=m_0$, so
$g(\pi)=\pi^{m_0}\mathsf{g}_0(\pi)$
with $\mathsf{g}_0$ a polynomial and $\mathsf{g}_0(0)>0$.  Also
$\mathsf{g}_0(\pi)=g(\pi)/\pi^{m_0}>0$
for $\pi\in(0,1]$ because $g$ is strictly increasing with $g(1)=1$.  Hence
$\mathsf{g}_0$ is continuous and positive on the compact interval $[0,1]$ and so
$g(\pi)\asymp\pi^{m_0}$.  Applying the same argument to $f^*$, whose
mean-field map is $1-g(1-\pi)$, gives $1-g(\pi)\asymp(1-\pi)^{m_1}$.  The
statements about $\beta$ and $\delta$ follow by multiplying by $1-\pi$
and $\pi$.
Finally $D_0=g'(0)=\mathsf{g}_0(0)\mathbf 1\{m_0=1\}$, so $D_0\ge1$ if and only if
$m_0=1$, and dually for $D_1$.

(a)  If $D_0\ge1$, say $f(e_i)=1$, then $x\ge e_i$ forces $f(x)\ge f(e_i)=1$,
so $f\ge x_i$ pointwise and $g(\pi)=\E_\pi[f]\ge\E_\pi[x_i]=\pi$ for every
$\pi\in[0,1]$.  Hence $\beta_j=(1-\pi)g(\pi)\ge\pi(1-\pi)=j(N-j)/N^2$.

(b)  With $m_1=1$ the fourth asymptotic reads $\delta(\pi)\asymp\pi(1-\pi)$,
that is $\delta_j\asymp j(N-j)/N^2$.

(c)  By the first sentence, $m_0\ge2$ and $m_1\ge2$ mean $D_0=D_1=0$; in
particular $f$ is not a dictator, a dictator having $D_0=D_1=1$
(Lemma~\ref{lem:endpoint}), so Corollary~\ref{cor:sign} applies and
Lemma~\ref{lem:endpoint}(iii) puts $f$ in its case~(c), which supplies the
interior fixed point $p_*$.  Fix $\eta$ as stated and form the
window~\eqref{eq:window}.  Since
$L\ge(p_*-\eta)N-1\ge\frac{p_*}{2}N-1$ and
$N-U\ge(1-p_*-\eta)N-1\ge\frac{1-p_*}{2}N-1$, there is $N_0(f,\eta)$ with
\[
1\;\le\;\tfrac{p_*}{4}N\;\le\;L\;<\;U\;\le\;N-\tfrac{1-p_*}{4}N\;\le\;N-1
\qquad(N\ge N_0(f,\eta)).
\]
For $1\le j\le U$ we then have $1-j/N\ge1-U/N\ge(1-p_*)/4$, so
$(1-\pi)^{m_1}\asymp1$ and $\delta_j\asymp\pi=j/N$; for $L\le j\le N-1$ we
have $j/N\ge L/N\ge p_*/4$, so $\pi^{m_0}\asymp1$ and
$\beta_j\asymp1-\pi=(N-j)/N$.  Finally, for $L<j<U$ the first of these
gives $1/\delta_j\le CN/j\le CN/L\le 4C/p_*$ and the second gives
$1/\beta_j\le CN/(N-j)\le CN/(N-U)\le 4C/(1-p_*)$.  This proves the claim
that the constants do not depend on $\eta$: every constant produced in this
paragraph is a function of $p_*$, of $m_0$ and $m_1$, and of the constants
in the four asymptotics of the first display, and none of those involves
$\eta$, which is used only through the two inequalities
$\eta<p_*/2$ and $\eta<(1-p_*)/2$ that precede the display.  The threshold
$N_0(f,\eta)$ is the one quantity for which no such claim is made.
\end{proof}

Recall that $f$ is non-degenerate when $D_0\ne1$ and $D_1\ne1$, and that at
most one of $D_0,D_1$ is non-zero unless $f$ is a dictator
(Remark~\ref{rem:d0d1}).  The three cases of the next lemma, $D_0\ge2$ with
$D_1=0$; $D_0=0$ with $D_1\ge2$; and $D_0=D_1=0$, therefore exhaust the
non-degenerate rules.  The pairs they omit are $(D_0,D_1)=(1,1)$, the
dictator, treated in Section~\ref{sec:dict}, and $(D_0,D_1)=(1,0)$ and
$(0,1)$, the degenerate non-dictators, treated in
Theorem~\ref{thm:bin-deg}; by Theorem~\ref{thm:trichotomy} these three
groups account for every monotone non-constant $f$, as Table~\ref{tab:trichotomy}
records.  Its bounds are stated in terms of $\Lambda_j$, $M_j$
and ratios of $\Sigma$; the identity
$\frac{\Sigma_s}{\Sigma_j}\Lambda_j=\frac{\Sigma_s}{\gamma_{j-1}}$ of
parts~(b3) and~(c2) is what feeds the bound
$G(s,j)\le\frac{\Sigma_s}{\gamma_{j-1}}\cdot\frac1{\delta_j}$
of~\eqref{eq:Gright}.

\begin{lemma}[Profile of a non-degenerate rule]\label{lem:scale}
Let $f$ be monotone, non-constant and non-degenerate; then $f$ is not a
dictator, and exactly one of the following three cases occurs.  All constants
below depend only on $f$.
\begin{enumerate}[label=(\alph*),itemsep=3pt]
\item If $D_0\ge2$ and $D_1=0$, then $M_j\le C$ for $1\le j\le N-1$.
\item If $D_0=0$ and $D_1\ge2$, then there is $\nu>1$ such that
\begin{enumerate}[label=(b\arabic*),itemsep=2pt,leftmargin=2.4em]
\item $\Lambda_j\le C$ for $1\le j\le N-1$;
\item $1-\dfrac{\Sigma_s}{\Sigma_N}\ge c$ for $1\le s\le N-1$;
\item $\dfrac{\Sigma_s}{\Sigma_j}\,\Lambda_j=\dfrac{\Sigma_s}{\gamma_{j-1}}
\le C\,\nu^{-(j-s)}$ for $1\le s\le j\le N-1$.
\end{enumerate}
\item If $D_0=D_1=0$, let $p_*$ be the interior fixed point.  There are
$\eta\in\bigl(0,\min(p_*,1-p_*)/2\bigr)$ and $\nu>1$ such that, with
$i_0$, $L$, $U$ the window~\eqref{eq:window} of half-width $\eta$ at $p_*$,
the following hold for every $N\ge N_0(f)$:
\begin{enumerate}[label=(c\arabic*),itemsep=2pt,leftmargin=2.4em]
\item $\Lambda_j\le C$ for $1\le j\le L$, and $M_j\le C$ for $U\le j\le N-1$;
\item $\dfrac{\Sigma_s}{\Sigma_j}\,\Lambda_j=\dfrac{\Sigma_s}{\gamma_{j-1}}
\le C\,\nu^{-(j-s)}$ for $1\le s\le j\le L$;
\item $\dfrac{\Sigma_s}{\Sigma_L}\le C\,\nu^{-(L-s)}$ for $1\le s\le L$;
\item $\dfrac{\Sigma_L}{\Sigma_N}\le\dfrac12$;
\item $L\asymp N$, $N-L\asymp N$, $U\asymp N$ and $N-U\asymp N$; in
particular $1\le L<i_0<U\le N-1$;
\item $\displaystyle\sum_{L<j\le i_0}\Lambda_j+\sum_{i_0\le j<U}M_j
\;\le\;C\,N\log N$.
\end{enumerate}
\end{enumerate}
The restriction $j\le L$ in~(c2) cannot be removed:
past $L$ the ratios $R(l/N)$ approach $1$ and then drop below it, and
$\Sigma_s/\gamma_{j-1}$ grows again.  In case~(b) no such restriction is
needed, because there $R$ is bounded below by $\nu>1$ on all of $(0,1)$.
\end{lemma}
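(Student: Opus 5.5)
The whole lemma reduces to one quantitative input: where and by how much the ratio $R(\pi)=\delta(\pi)/\beta(\pi)$ stays away from $1$. We then read off geometric bounds on the products $\gamma_j=\prod_{l\le j}R(l/N)$. First, non-degeneracy together with Lemma~\ref{lem:endpoint} and Remark~\ref{rem:d0d1} excludes the dictator. Lemma~\ref{lem:endpoint}(iii) then leaves exactly the three cases listed. For the limits of $R$ we use Lemma~\ref{lem:rates}. As $\pi\to0$, $R(\pi)\to1/D_0$ if $D_0\ge2$, and $R\to+\infty$ if $D_0=0$ (since $g\asymp\pi^{m_0}$ with $m_0\ge2$). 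As $\pi\to1$, $R\to D_1$ if $D_1\ge2$, and $R\to0$ if $D_1=0$.

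\emph{Cases (a) and (b).} In case~(b), Corollary~\ref{cor:sign}(b) gives $h<0$, hence $R>1$ on $(0,1)$. Since $\log R$ is continuous on $(0,1)$ with limits $+\infty$ at $0$ and $\log D_1>0$ at $1$, we get $\inf_{(0,1)}R=:\nu>1$. Then $\gamma_l/\gamma_{j-1}\le\nu^{-(j-1-l)}$ for $l<j$, so $\Lambda_j\le\sum_{k\ge0}\nu^{-k}=C$, which is (b1). Similarly $\Sigma_s/\gamma_{j-1}=\sum_{l<s}\gamma_l/\gamma_{j-1}\le\sum_{l<s}\nu^{-(j-1-l)}\le C\nu^{-(j-s)}$, which is (b3). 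For (b2), note $\Sigma_N-\Sigma_s\ge\gamma_{N-1}$. This alone is too weak, so instead we use $\Sigma_N\le C\gamma_{N-1}$, which holds by the same geometric growth, together with $\Sigma_s\le C\nu^{-(N-1-s)}\gamma_{N-1}$. That gives $1-\Sigma_s/\Sigma_N\ge1-C\nu^{-(N-1-s)}/(\ldots)$, and the resulting bound can be close to $0$ only when $s$ is within $O(1)$ of $N$. The cleanest route is different: write $1-\Sigma_s/\Sigma_N=\sum_{l\ge s}\gamma_l/\Sigma_N\ge\gamma_{N-1}/\Sigma_N\ge c$, using $\Sigma_N\le C\gamma_{N-1}$. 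Case~(a) is the mirror image: $R\le\nu^{-1}<1$ uniformly, and $M_j=\sum_{l\ge j}\gamma_l/\gamma_j\le C$. Alternatively, obtain (a) from (b) via the reflection $f\mapsto f^*$, under which $R(\pi)\mapsto 1/R(1-\pi)$.

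\emph{Case (c).} We choose $\eta$ from the interior behaviour. By Theorem~\ref{thm:fp}, $h'(p_*)>0$, so $\log R$ has a simple zero at $p_*$ and $|\log R(\pi)|\ge c|\pi-p_*|$ near $p_*$. On $(0,p_*-\eta]$ we have $h<0$, so $R>1$; with $R\to\infty$ at $0$, compactness gives $R\ge\nu>1$ there, and symmetrically $R\le1/\nu$ on $[p_*+\eta,1)$. Statements (c1), (c2) and (c3) then follow exactly as in case~(b), with all sums confined to $l\le L$ or $l\ge U$. Statement (c5) is contained in Lemma~\ref{lem:rates}(c). For (c4), note that $\gamma_l$ increases from $l=L$ up to $i_0$ by a factor $\ge\nu^{\,c\eta N}$, since $R$ stays above $1$ on the left of $p_*$ and is bounded below by $1+c\eta$ on $[p_*-\eta,p_*-\eta/2]$. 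Hence $\Sigma_N-\Sigma_L\ge\gamma_{i_0'}\gg\Sigma_L\le C\gamma_{L}$ for $N\ge N_0(f)$, which gives $\Sigma_L/\Sigma_N\le1/2$.

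\emph{Main obstacle: (c6).} For $L<j\le i_0$, write $\Lambda_j=\sum_{l<j}\gamma_l/\gamma_{j-1}$. Since $\log R(\pi)\ge c(p_*-\pi)$ for $\pi\in[p_*-\eta,p_*]$ (taking $\eta$ small), we get for $l<j\le i_0$
\[
\log\frac{\gamma_{j-1}}{\gamma_l}\ \ge\ \frac{c}{N}\sum_{k=l+1}^{j-1}(i_0-k)\ \ge\ \frac{c}{N}(j-1-l)(i_0-j),
\]
while terms with $l\le L$ are handled by the geometric bound. Therefore $\Lambda_j\le C\min\bigl(N/(i_0-j+1),\,\sqrt N\bigr)$. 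The $\sqrt N$ bound comes from the quadratic lower bound $\log(\gamma_{j-1}/\gamma_l)\ge c(j-l)^2/N$ when the run lies inside the window. Summing over $i_0-j$ gives $\sqrt N\cdot\sqrt N+\sum_{\sqrt N<k\le\eta N}N/k\le CN\log N$. The $M_j$ sum is symmetric. The delicate point is keeping the lower bound on $\log R$ linear with a constant independent of $N$, uniformly on the window, while also handling the discretisation $i_0=\lfloor p_*N\rfloor$; both are fixed by shrinking $\eta$ and absorbing $O(1)$ shifts into $C$.
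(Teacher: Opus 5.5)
Your proposal is correct and follows essentially the paper's own argument: $R$ is bounded away from $1$ using the endpoint limits of Lemma~\ref{lem:rates} and compactness, which gives the geometric estimates (a), (b1)--(b3) and (c1)--(c3); $\gamma$ rises exponentially across $[L,i_0]$, which gives (c4); and the linear-plus-quadratic lower bound on $\log R$ inside the window gives (c6), with the floor asymmetry on the $M_j$ side absorbed by the $O(1)$ shift that the paper writes out as $l/N-p_*\ge(l-i_0-1)/N$. Only cosmetic points remain: the false start in (b2) can be deleted, since your final line is exactly the paper's argument; and (c5) needs, beyond the conclusion $1\le L<U\le N-1$ of Lemma~\ref{lem:rates}(c), the one-line arithmetic $L\ge(p_*-\eta)N-1$, $N-U\ge(1-p_*-\eta)N-1$, $i_0-L>\eta N-1$ and $U-i_0\ge\eta N$.
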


\begin{figure}[t]
\centering
\begin{tikzpicture}
\begin{axis}[
  width=0.86\textwidth, height=6.2cm,
  xlabel={$j$}, ylabel={$\log\gamma_j$},
  xmin=0, xmax=400, ymin=-8, ymax=185,
  xtick={0,120,200,280,400},
  xticklabels={$0$,$L$,$\lfloor p_*N\rfloor$,$U$,$N$},
  ytick={0,50,100,150},
  tick label style={font=\small}, label style={font=\small},
  axis lines=left, clip=false,
]
\addplot[very thick,smooth] coordinates {
(0,0.00) (2,9.10) (4,16.43) (6,22.86) (8,28.69) (10,34.05) (12,39.04)
(14,43.73) (16,48.16) (18,52.36) (20,56.36) (22,60.17) (24,63.83)
(26,67.33) (28,70.70) (30,73.94) (32,77.06) (34,80.07) (36,82.98)
(38,85.80) (40,88.52) (42,91.15) (44,93.71) (46,96.18) (48,98.59)
(50,100.92) (52,103.18) (54,105.38) (56,107.52) (58,109.60) (60,111.62)
(62,113.59) (64,115.50) (66,117.36) (68,119.18) (70,120.94) (72,122.66)
(74,124.33) (76,125.96) (78,127.54) (80,129.09) (82,130.59) (84,132.06)
(86,133.49) (88,134.88) (90,136.23) (92,137.55) (94,138.83) (96,140.08)
(98,141.30) (100,142.48) (102,143.63) (104,144.75) (106,145.84)
(108,146.90) (110,147.93) (112,148.93) (114,149.91) (116,150.85)
(118,151.77) (120,152.66) (122,153.52) (124,154.36) (126,155.17)
(128,155.96) (130,156.72) (132,157.45) (134,158.17) (136,158.85)
(138,159.52) (140,160.16) (142,160.77) (144,161.36) (146,161.93)
(148,162.48) (150,163.01) (152,163.51) (154,163.99) (156,164.45)
(158,164.88) (160,165.30) (162,165.69) (164,166.06) (166,166.41)
(168,166.74) (170,167.05) (172,167.34) (174,167.61) (176,167.86)
(178,168.08) (180,168.29) (182,168.48) (184,168.64) (186,168.79)
(188,168.91) (190,169.02) (192,169.10) (194,169.17) (196,169.21)
(198,169.24) (200,169.24) (202,169.23) (204,169.19) (206,169.14)
(208,169.06) (210,168.97) (212,168.85) (214,168.72) (216,168.56)
(218,168.39) (220,168.19) (222,167.97) (224,167.74) (226,167.48)
(228,167.20) (230,166.90) (232,166.58) (234,166.24) (236,165.88)
(238,165.50) (240,165.09) (242,164.67) (244,164.22) (246,163.75)
(248,163.26) (250,162.75) (252,162.21) (254,161.65) (256,161.07)
(258,160.47) (260,159.84) (262,159.19) (264,158.51) (266,157.81)
(268,157.09) (270,156.34) (272,155.57) (274,154.77) (276,153.94)
(278,153.09) (280,152.22) (282,151.31) (284,150.38) (286,149.42)
(288,148.44) (290,147.42) (292,146.38) (294,145.30) (296,144.20)
(298,143.06) (300,141.89) (302,140.69) (304,139.46) (306,138.19)
(308,136.89) (310,135.56) (312,134.19) (314,132.78) (316,131.33)
(318,129.85) (320,128.32) (322,126.76) (324,125.15) (326,123.50)
(328,121.80) (330,120.06) (332,118.28) (334,116.44) (336,114.55)
(338,112.61) (340,110.62) (342,108.57) (344,106.46) (346,104.29)
(348,102.06) (350,99.76) (352,97.39) (354,94.96) (356,92.44) (358,89.85)
(360,87.17) (362,84.40) (364,81.54) (366,78.58) (368,75.51) (370,72.33)
(372,69.03) (374,65.60) (376,62.02) (378,58.29) (380,54.38) (382,50.29)
(384,45.98) (386,41.42) (388,36.59) (390,31.42) (392,25.84) (394,19.74)
(396,12.91) (398,4.90) (399,0.00)
};
\addplot[dashed,gray] coordinates {(120,-8) (120,152.66)};
\addplot[dashed,gray] coordinates {(280,-8) (280,152.22)};
\addplot[dashed,gray] coordinates {(200,-8) (200,169.24)};
\end{axis}
\end{tikzpicture}
\caption{The scale function for $\mathrm{MAJ}_3$ at $N=400$: $\log\gamma_j$
against $j$, with the window of Lemma~\ref{lem:scale}(c) drawn for
$\eta=1/5$, an admissible choice here because
$\kappa'\ge\kappa'(p_*)$ on $[p_*-\eta,p_*+\eta]$, so that $L=120$,
$\lfloor p_*N\rfloor=200$ and $U=280$ for the interior fixed point
$p_*=1/2$.  The curve rises on $[0,\lfloor p_*N\rfloor]$
and falls on $[\lfloor p_*N\rfloor,N]$, with maximum
$\gamma_{200}=e^{169.2}$.  The drop from the maximum to the ends of the
window is a factor $e^{16.6}$ on the left and $e^{17.0}$ on the right;
it is this gap, of order $e^{\Theta(N)}$, that decouples the two basins.}
\label{fig:gamma}
\end{figure}

\begin{proof}
A dictator has $D_0=D_1=1$ (Lemma~\ref{lem:endpoint}) and so is degenerate;
hence $f$ is not a dictator, and by Lemma~\ref{lem:endpoint}(iii) at most one
of $D_0,D_1$ is non-zero.  Since also $D_0\ne1$ and $D_1\ne1$, the three
listed cases are exhaustive and mutually exclusive.

\smallskip\noindent\emph{Case (a).}
By Lemma~\ref{lem:endpoint}, $h>0$ on $(0,1)$, so $R<1$ there.  By
Lemma~\ref{lem:rates} with $m_0=1$ and $m_1\ge2$, the function $R$ extends
continuously to $[0,1]$ with $R(0)=\lim_{\pi\to0}\pi/g(\pi)=1/D_0\le1/2$ and
$R(1)=0$.  A continuous function that is $<1$ on a compact interval attains
a maximum $\lambda<1$.  Hence $\gamma_i/\gamma_{i'}\le\lambda^{\,i-i'}$ for
$i\ge i'$, because $\gamma_i/\gamma_{i'}$ is a product of $i-i'$ ratios
$R(l/N)$, each at most $\lambda$, and therefore
$M_j=\sum_{i\ge j}\gamma_i/\gamma_j\le\sum_{n\ge0}\lambda^n=(1-\lambda)^{-1}$.

\smallskip\noindent\emph{Case (b).}
Dually, $h<0$ so $R>1$ on $(0,1)$; here $m_0\ge2$ and $m_1=1$, so
$R(\pi)\asymp\pi^{1-m_0}\to\infty$ as $\pi\downarrow0$ and $R(1)=D_1\ge2$.
Thus $R$ is continuous and $>1$ on $(0,1]$ and tends to $+\infty$ at $0$, so
$\nu:=\inf_{(0,1)}R>1$ and $\gamma_i/\gamma_{i'}\ge\nu^{\,i-i'}$ for
$i\ge i'$.  Then
$\Lambda_j=\sum_{i<j}\gamma_i/\gamma_{j-1}\le\sum_{n\ge0}\nu^{-n}=\nu/(\nu-1)$,
which is~(b1).  Also $\Sigma_N\le\gamma_{N-1}\nu/(\nu-1)$ by the
same estimate, whence
$\Sigma_N-\Sigma_s\ge\gamma_{N-1}\ge\frac{\nu-1}{\nu}\Sigma_N$ for
$s\le N-1$, which is~(b2).  For~(b3), the definition of $\Lambda$
gives
$\frac{\Sigma_s}{\Sigma_j}\Lambda_j=\frac{\Sigma_s}{\gamma_{j-1}}
=\Lambda_s\frac{\gamma_{s-1}}{\gamma_{j-1}}$, and
$\gamma_{s-1}/\gamma_{j-1}=\prod_{l=s}^{j-1}R(l/N)^{-1}\le\nu^{-(j-s)}$ for
$1\le s\le j\le N-1$, since every factor is at most $\nu^{-1}$; with
$\Lambda_s\le\nu/(\nu-1)$ this gives
$\Sigma_s/\gamma_{j-1}\le\frac{\nu}{\nu-1}\nu^{-(j-s)}$.

\smallskip\noindent\emph{Case (c): the window.}
Here $h<0$ on $(0,p_*)$ and $h>0$ on $(p_*,1)$, so $R>1$ on $(0,p_*)$,
$R(p_*)=1$ and $R<1$ on $(p_*,1)$; in particular $R\ge1$ on $(0,p_*]$.
Put $\kappa=-\log R=\log\beta-\log\delta$, the $\Psi'$ of
Section~\ref{sec:potential} and a $C^1$ function on a
neighbourhood of $p_*$ since $\beta$ and $\delta$ are polynomials that are
positive there.  We have $\kappa(p_*)=0$ because $\beta(p_*)=\delta(p_*)$,
and $\kappa'=\beta'/\beta-\delta'/\delta$, so at $p_*$, where
$\beta=\delta$,
$\kappa'(p_*)=(\beta'(p_*)-\delta'(p_*))/\beta(p_*)=h'(p_*)/\beta(p_*)>0$ by
Theorem~\ref{thm:fp}.  By continuity of $\kappa'$ there is $\eta>0$ with
$\kappa'\ge c_1:=\kappa'(p_*)/2$ on $[p_*-\eta,p_*+\eta]$, and the mean value
theorem gives
\begin{equation}\label{eq:kappa}
\log R(\pi)\;\ge\;c_1(p_*-\pi)\ \ \text{on }[p_*-\eta,p_*],
\qquad
\log R(\pi)\;\le\;-c_1(\pi-p_*)\ \ \text{on }[p_*,p_*+\eta].
\end{equation}
Shrinking $\eta$ if necessary we may also assume
$\eta<\min(p_*,1-p_*)/2$.  On $(0,p_*-\eta]$ the function $R$ is continuous,
$>1$, and tends to $+\infty$ at $0$ (Lemma~\ref{lem:rates}, $m_0\ge2$), so
its infimum $\nu$ exceeds $1$.  On $[p_*+\eta,1)$ it is continuous, $<1$,
and extends continuously by $R(1)=0$ since $m_1\ge2$, so its supremum
$\lambda$ is $<1$.

\smallskip\noindent\emph{Proof of (c5).}
From $\eta<\min(p_*,1-p_*)/2$ we get $L\ge(p_*-\eta)N-1\ge\frac{p_*}2N-1$
and $N-U\ge(1-p_*-\eta)N-1\ge\frac{1-p_*}2N-1$, while $L\le p_*N$,
$U\ge p_*N$ and $i_0-L\ge\eta N-1$, $U-i_0\ge\eta N-1$.  Hence for
$N\ge N_0(f)$ all of $L$, $N-L$, $U$, $N-U$ lie between $cN$ and $N$, and
$1\le L<i_0<U\le N-1$.

\smallskip\noindent\emph{Proof of (c1).}
For $j\le L$ and $i<j$ the ratio $\gamma_i/\gamma_{j-1}$ is a product of the
factors $R(l/N)^{-1}$ with $i<l\le j-1$, and each such $l$ satisfies
$l/N\le(L-1)/N<p_*-\eta$, so $R(l/N)\ge\nu$; as in case~(b),
$\Lambda_j\le\nu/(\nu-1)$.  For $j\ge U$ and $i\ge j$ the ratio
$\gamma_i/\gamma_j$ is a product of factors $R(l/N)$ with $j<l\le i$, and
each such $l$ satisfies $l/N>U/N\ge p_*+\eta$, so $R(l/N)\le\lambda$; as in
case~(a), $M_j\le(1-\lambda)^{-1}$.

\smallskip\noindent\emph{Proof of (c2) and (c3).}
Let $1\le s\le j\le L$.  As in case~(b),
$\frac{\Sigma_s}{\Sigma_j}\Lambda_j=\frac{\Sigma_s}{\gamma_{j-1}}
=\Lambda_s\frac{\gamma_{s-1}}{\gamma_{j-1}}$, and
$\gamma_{s-1}/\gamma_{j-1}=\prod_{l=s}^{j-1}R(l/N)^{-1}\le\nu^{-(j-s)}$,
because every $l$ occurring satisfies $l\le j-1\le L-1$ and hence
$l/N<p_*-\eta$, where $R\ge\nu$.  Since $\Lambda_s\le\nu/(\nu-1)$ by~(c1),
this is~(c2).  Taking $j=L$ and using $\Sigma_L\ge\gamma_{L-1}$
gives $\Sigma_s/\Sigma_L\le\Sigma_s/\gamma_{L-1}\le\frac{\nu}{\nu-1}\nu^{-(L-s)}$,
which is~(c3).

\smallskip\noindent\emph{Proof of (c4).}
By~(c1), $\Sigma_L\le\frac{\nu}{\nu-1}\gamma_{L-1}$, while
$\Sigma_N\ge\gamma_{i_0-1}$, so
\[
\frac{\Sigma_L}{\Sigma_N}\;\le\;\frac{\nu}{\nu-1}\cdot\frac{\gamma_{L-1}}{\gamma_{i_0-1}}
\;\le\;\frac{\nu}{\nu-1}\exp\Bigl(-\frac{c_1}{N}\sum_{l=L+1}^{i_0-1}(i_0-l)\Bigr)
\;\le\;\frac{\nu}{\nu-1}\,e^{-c_1\eta^2N/4}\;\le\;\tfrac12
\]
for $N\ge N_0(f)$.  Indeed
$\gamma_{L-1}/\gamma_{i_0-1}=\prod_{l=L}^{i_0-1}R(l/N)^{-1}$; the product
starts at $l=L+1$ in the display because $L/N$ may fall just below
$p_*-\eta$, where~\eqref{eq:kappa} does not bound $\log R$, and the omitted
factor $R(L/N)^{-1}$ is at most $1$ because $R\ge1$ on $(0,p_*]$.  Each
remaining $l$ satisfies $p_*-\eta<(L+1)/N\le l/N\le(i_0-1)/N<p_*$, so
by~\eqref{eq:kappa} and $i_0\le p_*N$ we have
$R(l/N)^{-1}\le e^{-c_1(p_*-l/N)}\le e^{-c_1(i_0-l)/N}$.  Finally
$\sum_{l=L+1}^{i_0-1}(i_0-l)=\binom{i_0-L}{2}\ge\eta^2N^2/4$ for
$N\ge N_0(f)$, because $i_0-L\ge\eta N-1$.

\smallskip\noindent\emph{Proof of (c6).}
We first bound $\Lambda_j$ and $M_j$ inside the window.  Fix $L\le j\le i_0$
and put $a=i_0-j+1\ge1$.  Using $p_*-l/N\ge(i_0-l)/N$
for $l\le i_0$, for $L\le i\le j-1$ and $u=j-1-i\ge0$ we have,
by~\eqref{eq:kappa},
\[
\log\frac{\gamma_i}{\gamma_{j-1}}
=-\sum_{l=i+1}^{j-1}\log R(l/N)
\le-\frac{c_1}{N}\sum_{l=i+1}^{j-1}(i_0-l)
=-\frac{c_1}{N}\Bigl(ua+\frac{u(u-1)}{2}\Bigr).
\]
Discarding the quadratic term gives
$\sum_{u\ge0}e^{-c_1ua/N}\le1+N/(c_1a)$; discarding the linear term gives
$\sum_{u\ge0}e^{-c_1u(u-1)/(2N)}\le2+\int_0^\infty e^{-c_1x^2/(2N)}dx
\le2+\sqrt{2N/c_1}$, by the substitution $x=\sqrt{2N/c_1}\,y$ and
$\int_0^\infty e^{-y^2}dy<1$.  Hence
$\sum_{i=L}^{j-1}\gamma_i/\gamma_{j-1}\le C(1+\min(\sqrt N,N/a))$.  For
$i<L$ we use $R>\nu>1$ on $(0,p_*-\eta]$ and
$\gamma_{j-1}\ge\gamma_{L-1}$ (which holds because $R\ge1$ on
$[L/N,i_0/N]$), giving
$\sum_{i<L}\gamma_i/\gamma_{j-1}\le\sum_{i<L}\gamma_i/\gamma_{L-1}
\le\nu/(\nu-1)$.  Adding the two contributions gives
$\Lambda_j\le C\bigl(1+\min(\sqrt N,N/a)\bigr)$ for $L\le j\le i_0$.

The bound on $M_j$ does not follow from the same computation, because the
inequality just used holds only to the left of $p_*$: that inequality comes
from $i_0=\lfloor p_*N\rfloor\le p_*N$, and for the same reason its mirror
image is false.  What is available on the right is $p_*N<i_0+1$, hence
$l/N-p_*\ge(l-i_0-1)/N$ for every $l$.  Fix $i_0\le j\le U$, put
$b=j-i_0\ge0$, and for $i\ge j$ put $u=i-j\ge0$.  The second bound
of~\eqref{eq:kappa} gives
\[
\log\frac{\gamma_i}{\gamma_j}=\sum_{l=j+1}^{i}\log R(l/N)
\;\le\;-\frac{c_1}{N}\sum_{l=j+1}^{i}(l-i_0-1)
\;=\;-\frac{c_1}{N}\Bigl(u(b-1)+\frac{u(u+1)}{2}\Bigr).
\]
The quantity in parentheses is at least $u(u-1)/2$ for every $b\ge0$, and at
least $u(b-\frac12)\ge ub/2$ when $b\ge1$.  Restricting to $i\le U-1$,
so that every $l$ in the sum satisfies $l/N\in[p_*,p_*+\eta]$, and
summing over $u\ge0$ as before, gives
$\sum_{i=j}^{U-1}\gamma_i/\gamma_j\le C(1+\sqrt N)$ when $b=0$ and
$\le C(1+\min(\sqrt N,N/b))$ when $b\ge1$.  For $i\ge U$ split the
product $\gamma_i/\gamma_j=\prod_{l=j+1}^{i}R(l/N)$ at $l=U$: every $l$
here satisfies $l\ge i_0+1>p_*N$, so the factors with $l\le U$ are at
most $1$ because $R\le1$ on $[p_*,1]$, and those with $l>U$ satisfy
$l/N>p_*+\eta$ and are at most $\lambda<1$, so
$\sum_{i\ge U}\gamma_i/\gamma_j\le(1-\lambda)^{-1}$.  Since
$N/b\le2N/(b+1)$ for $b\ge1$, and since $\min(\sqrt N,N)=\sqrt N$ covers
the case $b=0$, enlarging the constant turns these bounds into
$M_j\le C\bigl(1+\min(\sqrt N,N/(j-i_0+1))\bigr)$.

In the two bounds just proved, $a=i_0-j+1$ ranges over $1\le a\le i_0-L$ as
$j$ runs over $L<j\le i_0$, and $a=j-i_0+1$ ranges over $1\le a\le U-i_0$ as
$j$ runs over $i_0\le j<U$.  Both ranges of $a$ lie in
$1\le a\le A:=\lfloor\eta N\rfloor+2$: indeed $i_0\le p_*N$ and
$L>(p_*-\eta)N-1$ give $i_0-L<\eta N+1\le A$, while
$U<(p_*+\eta)N+1$ and $i_0>p_*N-1$ give $U-i_0<\eta N+2\le A+1$ and hence
$U-i_0\le A$, both quantities being integers.  Therefore
\[
\sum_{L<j\le i_0}\Lambda_j+\sum_{i_0\le j<U}M_j
\;\le\;2C\sum_{a=1}^{A}\Bigl(1+\min\bigl(\sqrt N,\tfrac Na\bigr)\Bigr).
\]
Since
\[
\sum_{a=1}^{A}\min\bigl(\sqrt N,\tfrac Na\bigr)
\le\sum_{a\le\sqrt N}\sqrt N+\sum_{\sqrt N<a\le A}\frac Na
\le N+N\log A ,
\]
the right-hand side of the previous display is at most
$2C\bigl(A+N+N\log A\bigr)$.  Finally $A\le\eta N+2\le N$ and
$\log A\le\log N$ for $N\ge N_0(f)$, because $\eta<1/2$; so the whole is at
most $2C\,N(2+\log N)\le C'N\log N$ for $N\ge N_0(f)$, enlarging $N_0(f)$
once more so that $\log N\ge2$.
\end{proof}

The next lemma is the counterpart of Lemma~\ref{lem:scale} for a degenerate
rule; it owns the residual mean-field map $\tilde g$ and everything derived
from it.

\begin{lemma}[Profile of a degenerate rule]\label{lem:degprofile}
Let $f$ be monotone, non-constant, not a dictator, with $D_0(f)=1$; let
$\tilde f$ be the residual rule supplied by Lemma~\ref{lem:degstruct}, let
$m=m(f)\ge2$ be the least size of a minterm of $\tilde f$, and put
$K=N^{(m-1)/m}$.  There are $\pi_0\in(0,1/2]$, $c_0>0$ and $C<\infty$,
depending only on $f$, such that the following hold for every $N\ge N_0(f)$.
\begin{enumerate}[label=(\roman*),itemsep=2pt]
\item $\gamma_i\le1$ for $0\le i\le N-1$; consequently $\Sigma_j\le j$ for
$0\le j\le N$.
\item $\gamma_i\ge c_0$ for $0\le i\le K+1$; consequently
$\Sigma_N\ge c_0(K-1)$.
\item $\dfrac{\Lambda_j}{\delta_j}\ge N$ for $1\le j\le N-1$, and
$\dfrac{\Lambda_j}{\delta_j}\le C\,N$ for $1\le j\le\lceil K\rceil$.
\item $M_j\le C\,(N/j)^{m-1}$ for $\lceil K\rceil<j\le\pi_0N$, and
$M_j\le C$ for $\pi_0N<j\le N-1$.
\end{enumerate}
\end{lemma}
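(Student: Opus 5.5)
The plan is to reduce everything to a sharp local form of the ratio $R(\pi)=\delta(\pi)/\beta(\pi)$ near $0$, and then read off (i)--(iv) from products and sums of $R(l/N)$.

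\emph{Local form of $R$.} By Lemma~\ref{lem:degstruct}, $f(x)=x_j\vee\tilde f(x_{-j})$. With $\tilde g$ the mean-field map of $\tilde f$ (on $r-1$ coordinates) this gives $g(\pi)=\pi+(1-\pi)\tilde g(\pi)$ and $1-g=(1-\pi)(1-\tilde g)$. Hence
\[
R(\pi)=\frac{\pi(1-\pi)(1-\tilde g(\pi))}{(1-\pi)\bigl(\pi+(1-\pi)\tilde g(\pi)\bigr)}
=\frac{\pi(1-\tilde g)}{\pi+(1-\pi)\tilde g}
=1-\frac{\tilde g(\pi)}{\pi+(1-\pi)\tilde g(\pi)}.
\]
The argument in the proof of Lemma~\ref{lem:rates}, applied to $\tilde f$, gives $\tilde g(\pi)\asymp\pi^{m}$ on $[0,1]$, since the least minterm size of $\tilde f$ is $m$. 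So $1-R(\pi)\asymp\pi^{m-1}$ uniformly on $(0,\pi_0]$ for a small $\pi_0$, and $0<R<1$ on $(0,1)$ since $h>0$ there. On $[\pi_0,1)$, $R$ extends continuously with $R(1)=0$, because $m_1\ge2$ as $D_1=0$. Hence $R\le\lambda<1$ there.

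\emph{Parts (i) and (ii).} Part (i) follows because $R<1$ makes $\gamma$ nonincreasing from $\gamma_0=1$, and then $\Sigma_j\le j$. For (ii), for $i\le K+1\le\pi_0N$ we bound
\[
\log\gamma_i=\sum_{l\le i}\log R(l/N)\ge -C\sum_{l\le i}(l/N)^{m-1}\ge -C\,i^m/N^{m-1}\ge -C',
\]
using $\log(1-x)\ge-2x$ for small $x$. This gives $\gamma_i\ge c_0$, and so $\Sigma_N\ge\sum_{i<K}\gamma_i\ge c_0(K-1)$.

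\emph{Part (iii).} Since $\gamma$ is nonincreasing, $\Lambda_j=\Sigma_j/\gamma_{j-1}\ge j$. Also $\delta_j=\pi(1-g)\le\pi=j/N$, so $\Lambda_j/\delta_j\ge N$. For $j\le\lceil K\rceil$, (i) and (ii) give $\Lambda_j\le j/c_0$. Moreover $\delta_j\ge c\,j/N$ there, because $(1-\pi)^{m_1}\asymp1$ for $\pi\le\pi_0$ (Lemma~\ref{lem:rates}). Together these yield $\Lambda_j/\delta_j\le CN$.

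\emph{Part (iv), the main obstacle.} For $\pi_0N<j$ the argument is case~(a) of Lemma~\ref{lem:scale}: every ratio $R(l/N)$ with $l>j$ is at most $\lambda$, so $M_j\le(1-\lambda)^{-1}$. For $K<j\le\pi_0N$ I would write
\[
\gamma_i/\gamma_j=\prod_{l=j+1}^{i}R(l/N)\le\exp\Bigl(-c\sum_{l=j+1}^{i}(l/N)^{m-1}\Bigr)\le\exp\bigl(-c\,(i-j)(j/N)^{m-1}\bigr)
\]
for $i\le\pi_0N$, using monotonicity of $l^{m-1}$. Summing this geometric series gives at most $1+C(N/j)^{m-1}$. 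For $i>\pi_0N$ I would factor through $\gamma_{\lfloor\pi_0N\rfloor}/\gamma_j$, which is at most $1$, and then apply the $\lambda$-tail bound, which adds at most $(1-\lambda)^{-1}$. Since $j\le\pi_0N$, $(N/j)^{m-1}\ge1$, so the additive constants are absorbed and $M_j\le C(N/j)^{m-1}$.

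The delicate point is the uniform two-sided control $1-R\asymp\pi^{m-1}$, in particular the lower bound $1-R\ge c\pi^{m-1}$ used in (iv), together with the choice of $\pi_0$ so that $\log R\le-c\pi^{m-1}$ holds on $(0,\pi_0]$. The hypothesis $N\ge N_0(f)$ is needed so that $K+1\le\pi_0N$.
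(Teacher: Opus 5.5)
Your proposal is correct and follows the paper's proof essentially step for step. It uses the same factorisation $g=\pi+(1-\pi)\tilde g$, the same two-sided bound $\log(1/R)\asymp\pi^{m-1}$ on $(0,\pi_0]$ with $R\le\lambda<1$ beyond $\pi_0$, and the same split geometric-series argument for (iv). The only difference is cosmetic: in the upper half of (iii) you bound $\Lambda_j=\Sigma_j/\gamma_{j-1}$ via $\gamma_{j-1}\ge c_0$ and $\delta_j\ge cj/N$ directly, whereas the paper rewrites $\Lambda_j/\delta_j=\Sigma_j/(\beta_j\gamma_j)$ and uses $\gamma_j\ge c_0$ together with $\beta_j\ge j(N-j)/N^2$ from Lemma~\ref{lem:rates}(a).
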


\begin{proof}
By Lemma~\ref{lem:degstruct}, $D_1(f)=0$ and $f$ equals the disjunction of a
single coordinate with $\tilde f$ applied to the remaining $r-1$; taking
expectations under $\mathrm{Bern}(\pi)^{\otimes r}$ and conditioning on that
coordinate gives $g(\pi)=\pi+(1-\pi)\tilde g(\pi)$ with
$\tilde g(\pi)=\E_{\mathrm{Bern}(\pi)^{\otimes(r-1)}}[\tilde f]$.  As in the
proof of Lemma~\ref{lem:rates}, $\tilde g(\pi)=\pi^m\tilde{\mathsf g}(\pi)$
with $\tilde{\mathsf g}$ continuous and positive on $[0,1]$, so
\begin{equation}\label{eq:gtilde}
c_g\pi^m\;\le\;\tilde g(\pi)\;\le\;C_g\pi^m\qquad(\pi\in[0,1]).
\end{equation}
Also $h(\pi)=(1-\pi)\tilde g(\pi)>0$ on $(0,1)$, so $f$ is in case~(a) of
Corollary~\ref{cor:sign} and $R<1$ on $(0,1)$; in particular $\gamma$ is
decreasing and $\gamma_i\le\gamma_0=1$ for all $i$, whence
$\Sigma_j=\sum_{i<j}\gamma_i\le j$.  This is~(i).  Since
$1-g(\pi)=(1-\pi)(1-\tilde g(\pi))$,
\[
\frac1{R(\pi)}\;=\;\frac{\pi+(1-\pi)\tilde g(\pi)}{\pi(1-\tilde g(\pi))},
\qquad\text{so}\qquad
\log\frac1{R(\pi)}=\log\Bigl(1+\frac{(1-\pi)\tilde g(\pi)}{\pi}\Bigr)
-\log\bigl(1-\tilde g(\pi)\bigr).
\]
Fix $\pi_0\in(0,1/2]$ small enough that $C_g\pi_0^{m-1}\le1$ and
$\tilde g(\pi_0)\le1/2$.  For $\pi\in(0,\pi_0]$, using
$u/2\le\log(1+u)\le u$ for $0\le u\le1$ and $-\log(1-v)\le2v$ for
$0\le v\le1/2$, together with~\eqref{eq:gtilde},
\begin{equation}\label{eq:logR}
c_-\pi^{m-1}\;\le\;\log\frac{1}{R(\pi)}\;\le\;C_+\pi^{m-1},
\qquad c_-=\tfrac12(1-\pi_0)c_g,\ \ C_+=3C_g .
\end{equation}
On $[\pi_0,1)$ the function $R$ is continuous and $<1$, and extends
continuously by $R(1)=0$, so $R\le\lambda<1$ there.

Since $K=N^{(m-1)/m}=o(N)$ and $m\ge2$, there is $N_0(f)$ such that
$K\ge2$ and $\lceil K\rceil\le K+1\le\pi_0N\le N/2$ for $N\ge N_0(f)$, the
last inequality because $\pi_0\le1/2$; these bounds are used repeatedly
below.  Summing~\eqref{eq:logR} and using
$\frac{j^m}{m}\le\sum_{l\le j}l^{m-1}\le j^m$,
\[
\frac{c_-}{m}\cdot\frac{j^m}{N^{m-1}}\;\le\;\log\frac{1}{\gamma_j}
\;\le\;C_+\frac{j^m}{N^{m-1}}\qquad(1\le j\le \pi_0N).
\]
In particular $\gamma_i\ge c_0:=e^{-2^mC_+}$ for $0\le i\le K+1$.  For
$i=0$ this holds because $\gamma_0=1\ge c_0$, the display above having
nothing to say at $i=0$.  For $1\le i\le K+1$ it holds because such
$i$ satisfy $i\le K+1\le\pi_0N$, so that the display applies, and
$i\le2K$ since $K\ge2$, whence $i^m/N^{m-1}\le(2K)^m/N^{m-1}=2^m$.  Consequently, $\gamma$ being
decreasing,
$\Sigma_N\ge\Sigma_{\lfloor K\rfloor}\ge\lfloor K\rfloor\gamma_{\lfloor K\rfloor-1}
\ge c_0(K-1)$, which is~(ii).

For~(iii), $\gamma_i\ge\gamma_{j-1}$ for $i<j$ gives $\Sigma_j\ge j\gamma_{j-1}$,
so by~\eqref{eq:pgqg}
\[
\frac{\Lambda_j}{\delta_j}\;=\;\frac{\Sigma_j}{\delta_j\gamma_{j-1}}
\;=\;\frac{\Sigma_j}{\beta_j\gamma_j}
\;\ge\;\frac{j\gamma_{j-1}}{\delta_j\gamma_{j-1}}\;=\;\frac{j}{\delta_j}\;\ge\;N,
\]
the last step because $\delta_j=\frac jN(1-g(j/N))\le\frac jN$.  For the
second half of~(iii), let $j\le\lceil K\rceil$.  Then $j\le K+1$, so
$\gamma_j\ge c_0$ by~(ii), and $j\le\pi_0N\le N/2$, so $N-j\ge N/2$; with
$\Sigma_j\le j$ from~(i) and $1/\beta_j\le N^2/(j(N-j))$ from
Lemma~\ref{lem:rates}(a),
\[
\frac{\Lambda_j}{\delta_j}\;=\;\frac{\Sigma_j}{\beta_j\gamma_j}
\;\le\;\frac{j}{c_0}\cdot\frac{N^2}{j(N-j)}\;=\;\frac{N^2}{c_0(N-j)}
\;\le\;\frac{2N}{c_0}.
\]

For~(iv), let first $j\le\pi_0N$.  For $j\le i\le\pi_0N$,~\eqref{eq:logR}
gives
$\gamma_i/\gamma_j\le\exp\bigl(-c_-N^{1-m}(i-j)j^{m-1}\bigr)$, so with
$\tau=c_-(j/N)^{m-1}>0$,
\[
\sum_{i=j}^{\lfloor\pi_0N\rfloor}\frac{\gamma_i}{\gamma_j}
\;\le\;\frac{1}{1-e^{-\tau}}\;\le\;1+\frac1\tau
\;\le\;\Bigl(1+\frac1{c_-}\Bigr)\Bigl(\frac Nj\Bigr)^{m-1},
\]
the middle inequality holding for every $\tau>0$ because
$e^{-\tau}\le1/(1+\tau)$, and the last because $j\le N$.
For $i>\pi_0N$ we use $R\le\lambda$ there and
$\gamma_{\lfloor\pi_0N\rfloor}\le\gamma_j$, giving
$\sum_{i>\pi_0N}\gamma_i/\gamma_j\le(1-\lambda)^{-1}$.  Hence
$M_j\le C(N/j)^{m-1}$, which is the first half of~(iv).  If instead
$j>\pi_0N$, then every $i\ge j$ has $\gamma_i/\gamma_j\le\lambda^{\,i-j}$,
so $M_j\le(1-\lambda)^{-1}$.
\end{proof}

The last lemma of the subsection records, once and for all, what the
reflection $i\mapsto N-i$ does.  Both theorems use it to halve their case
analysis, and the third part is what lets the case $s\ge U$ be read off from
the cases already treated.

\begin{lemma}[Duality]\label{lem:duality}
Let $f$ be monotone and non-constant, and let $f^*(x)=1-f(\mathbf 1-x)$,
which is again monotone and non-constant.  Fix $N\ge2$.
\begin{enumerate}[label=(\roman*),itemsep=3pt]
\item $g_{f^*}(\pi)=1-g(1-\pi)$, hence $\beta^{f^*}(\pi)=\delta(1-\pi)$ and
$\delta^{f^*}(\pi)=\beta(1-\pi)$.  The birth-death chain of
Proposition~\ref{prop:bd} for $f^*$ started at $N-s$ is the reflection
$i\mapsto N-i$ of the chain for $f$ started at $s$; consequently
$\varphi_{f^*}(N-s)=\varphi_f(s)$ for $1\le s\le N-1$, and
$\max_{1\le s\le N-1}\varphi_{f^*}(s)=\max_{1\le s\le N-1}\varphi_f(s)$.
\item $D_0(f^*)=D_1(f)$ and $D_1(f^*)=D_0(f)$.  Hence $f^*$ is a dictator if
and only if $f$ is, $f^*$ is non-degenerate if and only if $f$ is, and, when
$f$ is not a dictator, $d_{f^*}(N-s)=d_f(s)$ for $1\le s\le N-1$.  Moreover
$f$ falls in case~(a) of Lemma~\ref{lem:scale} if and only if $f^*$ falls in
case~(b), and $f$ falls in case~(c) if and only if $f^*$ does.
\item Suppose $f$ is in case~(c) of Lemma~\ref{lem:scale}, with interior
fixed point $p_*$.  Then $g_{f^*}$ has the unique interior fixed point
$1-p_*$, and a half-width is admissible for $f^*$ exactly when it is
admissible for $f$.  Moreover, for every half-width $\eta$ of $f$ with
window $i_0,L,U$ as in~\eqref{eq:window}, and for every half-width $\eta^*$
of $f^*$ with window $i_0^*,L^*,U^*$ formed from~\eqref{eq:window} applied
to $f^*$,
\[
s\ge U\ \Longrightarrow\ N-s<U^*\qquad(1\le s\le N-1).
\]
\end{enumerate}
\end{lemma}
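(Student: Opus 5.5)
Parts (i) and (ii) are direct computations from the definitions; part (iii) reduces to comparing the two windows through their defining inequalities.

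\emph{Part (i).} Under $\mathrm{Bern}(\pi)^{\otimes r}$ the vector $\mathbf 1-X$ has law $\mathrm{Bern}(1-\pi)^{\otimes r}$. Hence
\[
g_{f^*}(\pi)=1-\E_\pi[f(\mathbf 1-X)]=1-g(1-\pi).
\]
Substituting into $\beta(\pi)=(1-\pi)g(\pi)$ and $\delta(\pi)=\pi(1-g(\pi))$ gives $\beta^{f^*}(\pi)=(1-\pi)(1-g(1-\pi))=\delta(1-\pi)$ and $\delta^{f^*}(\pi)=\pi g(1-\pi)=\beta(1-\pi)$. Evaluating at $\pi=(N-i)/N$ shows that the $f^*$-chain moves $N-i\to N-i+1$ with the probability that the $f$-chain moves $i\to i-1$, and vice versa. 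So the image under $i\mapsto N-i$ of the $f$-chain started at $s$ is the $f^*$-chain started at $N-s$. The absorbing set $\{0,N\}$ is invariant under the reflection, so the two absorption times have the same law, which gives $\varphi_{f^*}(N-s)=\varphi_f(s)$. The identity of maxima follows because $s\mapsto N-s$ is a bijection of $\{1,\dots,N-1\}$.

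\emph{Part (ii).} From $f^*(e_i)=1-f(\mathbf 1-e_i)$ we get $D_0(f^*)=D_1(f)$. Since $f^{**}=f$, the other identity follows by applying this to $f^*$. The pair $(D_0,D_1)$ is simply swapped, so dictatorship, which by Remark~\ref{rem:d0d1} is $D_0=D_1=1$, and non-degeneracy are both preserved. The cases of Lemma~\ref{lem:scale} behave as claimed: $D_0\ge2,\ D_1=0$ becomes $D_0=0,\ D_1\ge2$, and $D_0=D_1=0$ is fixed. For $d_f$ I will read off Definition~\ref{def:attracting}.
\begin{itemize}
\item If $f$ is in case (a), then $f^*$ is in case (b), and $d_{f^*}(N-s)=N-s=d_f(s)$.
\item Case (b) is symmetric to case (a).
\item In case (c), $\min(s,N-s)$ is invariant under $s\mapsto N-s$.
\end{itemize}

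\emph{Part (iii).} From (i), $h_{f^*}(p)=-h(1-p)$, so the unique interior zero of $h_{f^*}$ is $1-p_*$. Admissibility means $0<\eta<\min(p_*,1-p_*)/2$, together with the continuity condition $\kappa'\ge\kappa'(p_*)/2$ on the window used in Lemma~\ref{lem:scale}(c). The first condition is symmetric in $p_*\leftrightarrow1-p_*$. For the second,
\[
\kappa_{f^*}(\pi)=\log\frac{\beta^{f^*}(\pi)}{\delta^{f^*}(\pi)}=\log\frac{\delta(1-\pi)}{\beta(1-\pi)}=-\kappa(1-\pi),
\]
so $\kappa_{f^*}'(\pi)=\kappa'(1-\pi)$. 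The window condition for $f^*$ on $[1-p_*-\eta,1-p_*+\eta]$ is therefore the same as the condition for $f$ on $[p_*-\eta,p_*+\eta]$.

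For the implication, if $s\ge U=\lceil(p_*+\eta)N\rceil$ then $N-s\le(1-p_*-\eta)N$. On the other hand $U^*\ge(1-p_*+\eta^*)N$. Since $\eta,\eta^*>0$, we get $N-s<U^*$.

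The only delicate point is what ``admissible'' means precisely: it must cover the $\kappa'$ condition used to choose $\eta$ in Lemma~\ref{lem:scale}(c), and the reflection identity for $\kappa_{f^*}$ handles it. Everything else is bookkeeping.
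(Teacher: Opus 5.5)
Your proposal is correct and follows the paper's proof essentially step for step. The differences are minor: you get uniqueness of the interior fixed point $1-p_*$ of $g_{f^*}$ directly from $h_{f^*}(p)=-h(1-p)$ rather than by reapplying Theorem~\ref{thm:fp} to $f^*$, and you also check that the $\kappa'$ condition of Lemma~\ref{lem:scale}(c) is reflection-invariant, which the paper's proof does not need because it takes admissibility to mean only $0<\eta<\min(p_*,1-p_*)/2$.
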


\begin{proof}
(i)  The map $x\mapsto\mathbf 1-x$ reverses the coordinatewise order and
$b\mapsto1-b$ reverses $\{0,1\}$, so $f^*$ is monotone; it is non-constant
because $f$ is.  If $X\sim\mathrm{Bern}(\pi)^{\otimes r}$ then
$\mathbf 1-X\sim\mathrm{Bern}(1-\pi)^{\otimes r}$, so
$g_{f^*}(\pi)=\E\bigl[1-f(\mathbf 1-X)\bigr]=1-g(1-\pi)$.  Hence
\[
\begin{aligned}
\beta^{f^*}(\pi)&=(1-\pi)g_{f^*}(\pi)=(1-\pi)\bigl(1-g(1-\pi)\bigr)=\delta(1-\pi),\\
\delta^{f^*}(\pi)&=\pi\bigl(1-g_{f^*}(\pi)\bigr)=\pi\,g(1-\pi)=\beta(1-\pi).
\end{aligned}
\]
By Proposition~\ref{prop:bd} the chain for $f^*$ therefore has
$\beta^{f^*}_i=\delta_{N-i}$ and $\delta^{f^*}_i=\beta_{N-i}$ for
$1\le i\le N-1$, which are exactly the transition probabilities of the
reflected chain, and $i\mapsto N-i$ is a bijection of $\{0,\dots,N\}$
mapping the absorbing set $\{0,N\}$ to itself.  So the two absorption times
have the same law when the starting states correspond under the reflection,
giving $\varphi_{f^*}(N-s)=\varphi_f(s)$; and since $s\mapsto N-s$ permutes
$\{1,\dots,N-1\}$, the two maxima agree.

(ii)  By~\eqref{eq:endpointdeg},
$D_0(f^*)=\#\{i:f^*(e_i)=1\}=\#\{i:f(\mathbf 1-e_i)=0\}=D_1(f)$, and
$D_1(f^*)=D_0(f)$ follows by applying this to $f^*$, since $f^{**}=f$.  The
three assertions about dictators, non-degeneracy and the cases of
Lemma~\ref{lem:scale} are immediate, all four notions being defined by the
pair $(D_0,D_1)$: a dictator is $(1,1)$ by
Theorem~\ref{thm:trichotomy}, non-degeneracy is $D_0\ne1$ and $D_1\ne1$, and
the cases~(a),~(b),~(c) of Lemma~\ref{lem:scale} are $D_0\ge2,D_1=0$;
$D_0=0,D_1\ge2$; and $D_0=D_1=0$, which the swap $(D_0,D_1)\mapsto(D_1,D_0)$
exchanges as claimed.  For $d$, use Definition~\ref{def:attracting}: in
case~(c) both rules have $d(\cdot)=\min(\cdot,N-\cdot)$ and
$\min(N-s,s)=\min(s,N-s)$; if $D_0(f)\ge1$ then $d_f(s)=N-s$ while
$D_1(f^*)\ge1$ gives $d_{f^*}(N-s)=N-s$; and if $D_1(f)\ge1$ then
$d_f(s)=s$ while $D_0(f^*)\ge1$ gives $d_{f^*}(N-s)=N-(N-s)=s$.

(iii)  $g_{f^*}(1-p_*)=1-g(p_*)=1-p_*$, and this is the only interior fixed
point of $g_{f^*}$ by Theorem~\ref{thm:fp}, which applies because $f^*$ is
not a dictator by~(ii).  Since $\min(1-p_*,p_*)=\min(p_*,1-p_*)$, the
constraint $0<\eta^*<\min(1-p_*,p_*)/2$ defining the admissible half-widths
of $f^*$ is the same as for $f$.  Finally $U$ and $U^*$ are the
ceilings~\eqref{eq:window}, so $U\ge(p_*+\eta)N$ and
$U^*\ge(1-p_*+\eta^*)N$, and $s\ge U$ gives
\[
N-s\;\le\;(1-p_*-\eta)N\;<\;(1-p_*)N\;<\;(1-p_*+\eta^*)N\;\le\;U^* ,
\]
the two strict inequalities using only $\eta>0$ and $\eta^*>0$: no relation
between $\eta$ and $\eta^*$ is used.
\end{proof}

\subsection{The non-degenerate theorem}

\begin{theorem}[Non-degenerate rules]\label{thm:bin-nondict}
Let $f:\{0,1\}^r\to\{0,1\}$ be monotone, non-constant and non-degenerate.
There are constants $0<c<C<\infty$, depending only on $f$, such that for
every $N\ge2$ and every $1\le s\le N-1$,
\[
c\,N\bigl(1+\log d_f(s)\bigr)\;\le\;\E[T\mid S_0=s]\;\le\;C\,N\bigl(1+\log d_f(s)\bigr).
\]
In particular $\E[T]=O(N\log N)$ uniformly in the initial state, and for
each fixed $\varepsilon>0$, $\E[T]=\Theta(N\log N)$ uniformly over the
initial states with $d_f(s)\ge\varepsilon N$.
\end{theorem}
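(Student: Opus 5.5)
We plan to work from Proposition~\ref{prop:green} through the split~\eqref{eq:Gleft}--\eqref{eq:Gright}, using Lemma~\ref{lem:duality} to reduce the case count. By Lemma~\ref{lem:duality}(i),(ii), the case $D_0\ge2$, $D_1=0$ (case~(a)) is the reflection of case~(b), with $\varphi_{f^*}(N-s)=\varphi_f(s)$ and $d_{f^*}(N-s)=d_f(s)$. It therefore suffices to treat cases~(b) and~(c) of Lemma~\ref{lem:scale}. Small $N$ are absorbed into the constants as explained above. In case~(b) we have $d_f(s)=s$, so we must show $\varphi(s)\asymp N(1+\log s)$.

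\emph{Case (b), upper bound.} We split $\varphi(s)=\sum_{j\le s}G(s,j)+\sum_{j>s}G(s,j)$.
\begin{itemize}
\item For $j\le s$, \eqref{eq:Gleft} together with (b1) gives $G(s,j)\le C/\delta_j$. Lemma~\ref{lem:rates}(b) gives $1/\delta_j\asymp N/j+N/(N-j)$. Summing over $j\le s$ and using Lemma~\ref{lem:harmonic}(a),(b) yields $CN(1+\log s)$.
\item For $j>s$, \eqref{eq:Gright} and (b3) give $G(s,j)\le C\nu^{-(j-s)}\bigl(N/j+N/(N-j)\bigr)$. The term $N/j\le N/s$ sums geometrically to $O(N)$.
\item The term $N/(N-j)$ is handled by Lemma~\ref{lem:harmonic}(c) after reindexing $u=N-j$ and $v=N-s$. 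Concretely, $\sum_{j>s}\nu^{-(j-s)}/(N-j)=\sum_{u=1}^{v-1}\nu^{-(v-u)}/u\le A_\nu$, which contributes $O(N)$.
\end{itemize}

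\emph{Case (b), lower bound.} We keep only $j\le s$. By (b2) and $\Lambda_j\ge1$, $G(s,j)\ge c/\delta_j\ge cN/j$, since $\delta_j\le j/N$. Summing gives $cNH_s\ge\frac c2N(1+\log s)$.

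\emph{Case (c).} Here $d_f(s)=\min(s,N-s)$. We fix $\eta$, $\nu$ and the window $L,i_0,U$ from Lemma~\ref{lem:scale}(c) and distinguish three ranges of $s$.
\begin{itemize}
\item \emph{$s\le L$, upper bound.} For $j\le s$ we use~\eqref{eq:Gleft} with (c1) and $1/\delta_j\asymp N/j$ from Lemma~\ref{lem:rates}(c), giving $CN(1+\log s)$. For $s<j\le L$, (c2) gives geometric decay, hence $O(N)$. For $j>L$ we use $G(s,j)=\frac{\Sigma_s}{\Sigma_j}\widehat G(j)\le\frac{\Sigma_s}{\Sigma_L}\widehat G(j)$ together with (c3), which supplies a factor $C\nu^{-(L-s)}$. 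We bound $\sum_{j>L}\widehat G(j)\le CN\log N$: inside the window by~\eqref{eq:Ghat} with (c6) and $1/\delta_j,1/\beta_j\le C$; beyond $U$ by $M_j/\beta_j$ with (c1) and $1/\beta_j\asymp N/(N-j)$, which gives $N\log N$. The product is $C\nu^{-(L-s)}N\log N$. This is $\le CN(1+\log s)$, since either $L-s\ge C'\log N$ or else $s\asymp N$. This balancing step is the delicate point.
\item \emph{$s\le L$, lower bound.} From~\eqref{eq:Gleft} with $1-\Sigma_s/\Sigma_N\ge1/2$ by (c4), we get $G(s,j)\ge\frac12\cdot N/j$ for $j\le s$.
\item \emph{$s\ge U$.} This follows by duality, using Lemma~\ref{lem:duality}(iii), since then $N-s<U^*$.
\item \emph{$L<s<U$.} Here $d_f(s)\asymp N$, so we need $\varphi\asymp N\log N$. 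The upper bound is $\sum_j\widehat G(j)\le CN\log N$, obtained exactly as above together with the left part $\sum_{j\le L}\Lambda_j/\delta_j\le CN\log N$. For the lower bound we pick the side on which $\rho_s$ is harmless: if $\Sigma_s\le\Sigma_N/2$, then~\eqref{eq:Gleft} gives $G(s,j)\ge\frac12N/j$ for $j\le L$, summing to $cN\log N$. Otherwise we apply the symmetric bound $G(s,j)\ge\frac12 M_j/\beta_j$ for $j\ge s$, which covers $j\ge U$.
\end{itemize}

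The main obstacle is the coupling term in case~(c) for $s\le L$. There the window's $N\log N$ mass must be killed by the decay $\nu^{-(L-s)}$, and this decay is not enough when $s$ is close to $L$. We will handle that regime directly, noting that $s\ge L-C\log N$ forces $\log s\asymp\log N$, so the crude bound already suffices.
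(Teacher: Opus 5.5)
Your proposal is correct and follows the paper's proof essentially step for step. The shared steps are the duality reduction to cases~(b) and~(c), the Green's-function split with (b1)--(b3) and Lemma~\ref{lem:harmonic} in case~(b), and in case~(c) the envelope bound $\sum_j\widehat G(j)\le CN\log N$, the decoupling factor $\Sigma_s/\Sigma_L$ from (c3), the dichotomy $\Sigma_s\lessgtr\Sigma_N/2$ for $L<s<U$, and Lemma~\ref{lem:duality}(iii) for $s\ge U$. Your deviations are cosmetic: splitting at $j\le s$, using $\delta_j\le j/N$ directly, and thresholding $L-s$ at $C'\log N$ rather than at $L/2$. The one step left implicit is converting $1+\log s$ into $1+\log d_f(s)$ for $s\le L$, which follows from $N-s\ge N-L\asymp N$.
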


\begin{proof}
Absorbing the small $N$ as licensed in Section~\ref{sec:binary}, we may
assume $N\ge N_0(f)$ throughout; every appeal below to
Lemma~\ref{lem:rates}(c) or to Lemma~\ref{lem:scale}(c) is made under this
hypothesis.  We recall the three forms~\eqref{eq:Gleft}--\eqref{eq:Ghat} of
the Green's function, which are used throughout:
\[
G(s,j)=\Bigl(1-\tfrac{\Sigma_s}{\Sigma_N}\Bigr)\tfrac{\Lambda_j}{\delta_j}
\ (j\le s),
\quad
G(s,j)=\tfrac{\Sigma_s}{\Sigma_j}\widehat G(j)\ (j\ge s),
\quad
G(s,j)\le\widehat G(j)\le\min\bigl(\tfrac{\Lambda_j}{\delta_j},\tfrac{M_j}{\beta_j}\bigr).
\]

\smallskip\noindent\emph{Reduction by duality.}
By Lemma~\ref{lem:duality}(i) and~(ii) the dual $f^*(x)=1-f(\mathbf 1-x)$ is
again monotone, non-constant and non-degenerate, and satisfies
$\varphi_{f^*}(N-s)=\varphi_f(s)$ and $d_{f^*}(N-s)=d_f(s)$, while case~(a)
for $f$ is case~(b) for $f^*$.  It is therefore enough to treat cases~(b)
and~(c).

\medskip\noindent\textbf{Case (b): $D_0=0$, $D_1\ge2$.}
Here $d_f(s)=s$, and $m_1=1$, so Lemma~\ref{lem:rates}(b) applies and gives
$1/\delta_j\asymp N^2/(j(N-j))$ for $1\le j\le N-1$; let $\nu>1$ be the
constant of Lemma~\ref{lem:scale}(b).

\emph{Lower bound.}  By~\eqref{eq:Gleft}, $\Lambda_j\ge1$,
$1-\Sigma_s/\Sigma_N\ge c$ (Lemma~\ref{lem:scale}(b2)) and
$1/\delta_j\asymp N^2/(j(N-j))$ (Lemma~\ref{lem:rates}(b)),
\[
\varphi(s)\;\ge\;\sum_{j\le s}G(s,j)
\;\ge\;c\sum_{j\le s}\frac{1}{\delta_j}
\;\ge\;c'\,N\sum_{j\le s}\frac1j
\;=\;c'\,N\,H_s ,
\]
the third step because $N^2/(j(N-j))\ge N/j$.  Since
$H_s\ge\frac12(1+\log s)$ (Lemma~\ref{lem:harmonic}(a)), this is at least
$cN(1+\log s)$.

\emph{Upper bound.}  Split $\varphi(s)=\sum_{j<s}G(s,j)+\sum_{j\ge s}G(s,j)$.
For the first sum,~\eqref{eq:Gleft}, $\Lambda_j\le C$
(Lemma~\ref{lem:scale}(b1)) and $1/\delta_j\asymp N^2/(j(N-j))$
(Lemma~\ref{lem:rates}(b)) give
\[
\sum_{j<s}G(s,j)\;\le\;CN\sum_{j<s}\Bigl(\frac1j+\frac1{N-j}\Bigr)
\;\le\;C'N\bigl(1+\log s\bigr),
\]
using $N^2/(j(N-j))=N(1/j+1/(N-j))$ and, from
Lemma~\ref{lem:harmonic}(a) and~(b), the two bounds
$\sum_{j<s}1/j\le H_s\le1+\log s$ and $\sum_{j<s}1/(N-j)\le2(1+\log s)$.  For the second
sum, the identity in~\eqref{eq:Gright} together with~\eqref{eq:Ghat} gives
$G(s,j)\le\frac{\Sigma_s}{\Sigma_j}\frac{\Lambda_j}{\delta_j}$ for $j\ge s$,
so by $\frac{\Sigma_s}{\Sigma_j}\Lambda_j\le C\nu^{-(j-s)}$
(Lemma~\ref{lem:scale}(b3)) and $1/\delta_j\asymp N^2/(j(N-j))$
(Lemma~\ref{lem:rates}(b)),
\[
\sum_{j\ge s}G(s,j)\;\le\;CN\Bigl(\sum_{j\ge s}\frac{\nu^{-(j-s)}}{j}
+\sum_{j\ge s}\frac{\nu^{-(j-s)}}{N-j}\Bigr)\;\le\;C'N ,
\]
because the first inner sum is at most
$\frac{\nu}{\nu-1}\cdot\frac1s\le\frac{\nu}{\nu-1}$, while the second, on
substituting $v=N-s$ and $u=N-j$, equals $\sum_{u=1}^{v}\nu^{-(v-u)}/u$ and
so is at most $A_\nu$ by Lemma~\ref{lem:harmonic}(c).  Altogether
$\varphi(s)\le CN(1+\log s)$.

\medskip\noindent\textbf{Case (c): $D_0=D_1=0$.}
Here $d_f(s)=\min(s,N-s)$, and $m_0,m_1\ge2$.  Let $\eta$ and $\nu$ be the
constants of Lemma~\ref{lem:scale}(c), let
$i_0=\lfloor p_*N\rfloor$, $L=\lfloor(p_*-\eta)N\rfloor$ and
$U=\lceil(p_*+\eta)N\rceil$ be the window~\eqref{eq:window} of half-width
$\eta$, and apply
Lemma~\ref{lem:rates}(c) with this $\eta$; since $\eta$ depends only on $f$,
so does the threshold $N_0(f,\eta)$ of that lemma, and it is covered by the
$N\ge N_0(f)$ assumed above.  Throughout case~(c),
$1\le L<i_0<U\le N-1$ and $L\asymp N-L\asymp U\asymp N-U\asymp N$
(Lemma~\ref{lem:scale}(c5)); these four relations are used below without
further comment.

\emph{The envelope.}  Split $\sum_{j=1}^{N-1}\widehat G(j)$ over the three
ranges $j\le L$, $L<j<U$ and $j\ge U$.  On $j\le L$ we have $\Lambda_j\le C$
(Lemma~\ref{lem:scale}(c1)) and $1/\delta_j\asymp N/j$
(Lemma~\ref{lem:rates}(c)), so
$\widehat G(j)\le\Lambda_j/\delta_j\le CN/j$ by~\eqref{eq:Ghat} and the first
range contributes at most $CNH_N$.  On $j\ge U$ we have $M_j\le C$
(Lemma~\ref{lem:scale}(c1)) and $1/\beta_j\asymp N/(N-j)$
(Lemma~\ref{lem:rates}(c)), so
$\widehat G(j)\le M_j/\beta_j\le CN/(N-j)$ and the third range likewise
contributes at most $CNH_N$.  On $L<j<U$ both $1/\delta_j\le C$ and
$1/\beta_j\le C$ (Lemma~\ref{lem:rates}(c)), so
by~\eqref{eq:Ghat} and by
$\sum_{L<j\le i_0}\Lambda_j+\sum_{i_0\le j<U}M_j\le CN\log N$
(Lemma~\ref{lem:scale}(c6)),
\[
\sum_{L<j\le i_0}\frac{\Lambda_j}{\delta_j}
+\sum_{i_0<j<U}\frac{M_j}{\beta_j}
\;\le\;C\Bigl(\sum_{L<j\le i_0}\Lambda_j+\sum_{i_0\le j<U}M_j\Bigr)
\;\le\;C'\,N\log N .
\]
Since $H_N\le1+\log N\le2\log N$ for $N\ge N_0(f)$
(Lemma~\ref{lem:harmonic}(a)), we conclude
\begin{equation}\label{eq:envelope}
\max_{1\le s\le N-1}\varphi(s)\;\le\;\sum_{j=1}^{N-1}\widehat G(j)\;\le\;C\,N\log N .
\end{equation}

\emph{Upper bound for $s\le L$.}  By~\eqref{eq:Gleft}, $\Lambda_j\le C$
(Lemma~\ref{lem:scale}(c1)) and $1/\delta_j\asymp N/j$ (Lemma~\ref{lem:rates}(c)),
\[
\sum_{j\le s}G(s,j)\;\le\;CN\sum_{j\le s}\frac1j\;\le\;CN(1+\log s).
\]
For $s<j\le L$ the identity in~\eqref{eq:Gright} together
with~\eqref{eq:Ghat} gives
$G(s,j)\le\frac{\Sigma_s}{\Sigma_j}\cdot\frac{\Lambda_j}{\delta_j}$, and
$\frac{\Sigma_s}{\Sigma_j}\Lambda_j\le C\nu^{-(j-s)}$
(Lemma~\ref{lem:scale}(c2)), so with $1/\delta_j\asymp N/j$
(Lemma~\ref{lem:rates}(c)),
\[
\sum_{s<j\le L}G(s,j)\;\le\;CN\sum_{n\ge1}\frac{\nu^{-n}}{s}\;\le\;CN .
\]
For $j>L$ we use the identity in~\eqref{eq:Gright}, $\Sigma_j\ge\Sigma_L$
and $\frac{\Sigma_s}{\Sigma_L}\le C\nu^{-(L-s)}$
(Lemma~\ref{lem:scale}(c3)):
\[
\sum_{j>L}G(s,j)=\sum_{j>L}\frac{\Sigma_s}{\Sigma_j}\widehat G(j)
\le\frac{\Sigma_s}{\Sigma_L}\sum_{j>L}\widehat G(j)
\le C\nu^{-(L-s)}\cdot N\log N ,
\]
the last step by~\eqref{eq:envelope}.
If $s\le L/2$ then $L-s\ge L/2\ge cN$, and the
right-hand side is at most $1$ for $N\ge N_0(f)$.  If $L/2<s\le L$ then
$1+\log s\ge c\log N$,
and~\eqref{eq:envelope} already gives $\varphi(s)\le CN\log N\le CN(1+\log s)$.
So in all cases $\varphi(s)\le CN(1+\log s)$ for $s\le L$.  Since
$N-s\ge N-L\ge cN$, we have
$1+\log d_f(s)=\min(1+\log s,\,1+\log(N-s))\ge c'(1+\log s)$, and therefore
$\varphi(s)\le C'N(1+\log d_f(s))$.

\emph{Upper bound for $L<s<U$.}  Here $d_f(s)=\min(s,N-s)\asymp N$,
and~\eqref{eq:envelope} gives
$\varphi(s)\le CN\log N\le CN(1+\log d_f(s))$.

\emph{Upper bound for $s\ge U$.}  This follows from the two preceding
paragraphs by Lemma~\ref{lem:duality}: by~(ii) and~(iii) of that lemma $f^*$
is again in case~(c), $\varphi_f(s)=\varphi_{f^*}(N-s)$,
$d_f(s)=d_{f^*}(N-s)$, and $N-s<U^*$ for the window of any half-width
$\eta^*$ used for $f^*$; so $N-s$ lies in one of the two ranges already
treated for $f^*$.

\emph{Lower bound, $s\le L$.}  By~\eqref{eq:Gleft}, $\Lambda_j\ge1$,
$1-\Sigma_s/\Sigma_N\ge1-\Sigma_L/\Sigma_N\ge\frac12$
(Lemma~\ref{lem:scale}(c4), together with $\Sigma_s\le\Sigma_L$) and
$1/\delta_j\asymp N/j$ (Lemma~\ref{lem:rates}(c)),
\[
\varphi(s)\ \ge\ \sum_{j\le s}G(s,j)
\ \ge\ \tfrac12\sum_{j\le s}\frac1{\delta_j}
\ \ge\ c\,N\,H_s\ \ge\ c'\,N(1+\log s)\ \ge\ c'\,N\bigl(1+\log d_f(s)\bigr),
\]
the fourth step by $H_s\ge\frac12(1+\log s)$
(Lemma~\ref{lem:harmonic}(a)) and the last because $d_f(s)\le s$.

\emph{Lower bound, $L<s<U$.}  If $\Sigma_s\le\Sigma_N/2$ then the chain of
inequalities of the previous paragraph, restricted to $j\le L$, gives
$\varphi(s)\ge\frac12\sum_{j\le L}1/\delta_j\ge cNH_L\ge c'N\log N$, using
$L\asymp N$.  Otherwise
$\Sigma_s/\Sigma_N>1/2$ and, by Proposition~\ref{prop:green} for $j\ge s$
together with the definition of $M_j$, with $M_j\ge1$ and
$1/\beta_j\asymp N/(N-j)$ (Lemma~\ref{lem:rates}(c)),
\[
\varphi(s)\;\ge\;\sum_{j\ge U}G(s,j)
=\sum_{j\ge U}\frac{\Sigma_s}{\Sigma_N}\frac{M_j}{\beta_j}
\;\ge\;\tfrac12\sum_{j\ge U}\frac{1}{\beta_j}
\;\ge\;c\,N\sum_{j\ge U}\frac{1}{N-j}\;\ge\;c'N\log N ,
\]
the last step because $\sum_{j\ge U}1/(N-j)=H_{N-U}$ and $N-U\asymp N$.
Since $d_f(s)\le N$, both alternatives give
$\varphi(s)\ge c''N(1+\log d_f(s))$.

\emph{Lower bound, $s\ge U$.}  As for the upper bound, this is dual to the
two ranges just treated: by Lemma~\ref{lem:duality}(ii) and~(iii),
$\varphi_f(s)=\varphi_{f^*}(N-s)$, $d_f(s)=d_{f^*}(N-s)$ and $N-s<U^*$, so
$N-s$ lies in one of them.
\end{proof}

\ignore{
\begin{remark}[Numerics for the non-degenerate case]\label{rem:nondegnum}
The numerical method of Section~\ref{sec:intro} gives
$\varphi(s)/\bigl(N(1+\log d_f(s))\bigr)\in[0.82,1.93]$ uniformly over all
$1\le s\le N-1$ and over $N$ up to $32\,000$, for
$\mathrm{MAJ}_3$, $\mathrm{OR}_2$, $\mathrm{AND}_2$, the threshold-$2$
function on four inputs, and $x_1\vee x_2\vee(x_3\wedge x_4)$.  Over all
$98$ monotone non-constant $f$ with $r=4$ that are non-degenerate and not
dictators, the same ratio lies in $[0.87,1.93]$ at $N=51\,200$.
\end{remark}
}

\subsection{The degenerate theorem}

This subsection treats the two pairs left over by
Theorem~\ref{thm:bin-nondict}, namely $(D_0,D_1)=(1,0)$ and
$(D_0,D_1)=(0,1)$: the rules that are degenerate but not dictators.  By
duality it is enough to treat $D_0=1$, and then $D_1=0$ automatically
(Lemma~\ref{lem:degstruct}).  The estimates it needs were assembled in
Lemma~\ref{lem:degprofile}.

\begin{theorem}[Degenerate rules]\label{thm:bin-deg}
Let $f$ be monotone, non-constant, not a dictator, with $D_0(f)=1$.  Write
$f=x_j\vee\tilde f(x_{-j})$ as in Lemma~\ref{lem:degstruct} and let
$m=m(f)\ge2$ be the least size of a minterm of $\tilde f$.  Then
\[
\max_{1\le s\le N-1}\E[T\mid S_0=s]\;=\;\Theta\bigl(N^{2-1/m}\bigr),
\]
with implied constants depending only on $f$; the lower bound already
holds at $s=\lceil\varepsilon N^{(m-1)/m}\rceil$ for a suitable
$\varepsilon=\varepsilon(f)>0$.  The same holds when
$D_1(f)=1$, by duality.  Consequently the bound
$\E[T]=O(N\log N)$ of Theorem~\ref{thm:bin-nondict} fails for every
degenerate non-dictator $f$, and the exponent $2-1/m$ tends to $2$ as $m$
grows.
\end{theorem}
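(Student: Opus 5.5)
By duality (Lemma~\ref{lem:duality}(i)) it suffices to treat $D_0=1$. We may also assume $N\ge N_0(f)$, absorbing the small $N$ as licensed earlier. The plan is to bound the envelope sum $\sum_j\widehat G(j)$ from above using Lemma~\ref{lem:degprofile}, and then to exhibit a single starting state whose Green's function sum is already of order $NK=N^{2-1/m}$, where $K=N^{(m-1)/m}$.

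\emph{Upper bound.} Since $\varphi(s)\le\sum_{j}\widehat G(j)$ for every $s$, we bound $\widehat G(j)\le\min(\Lambda_j/\delta_j,\,M_j/\beta_j)$ from~\eqref{eq:Ghat} and split the range of $j$ into three pieces.
\begin{itemize}
\item For $j\le\lceil K\rceil$, Lemma~\ref{lem:degprofile}(iii) gives $\widehat G(j)\le CN$. This range contributes $O(NK)$.
\item For $\lceil K\rceil<j\le\pi_0N$, Lemma~\ref{lem:degprofile}(iv) gives $M_j\le C(N/j)^{m-1}$, and Lemma~\ref{lem:rates}(a) gives $1/\beta_j\le N^2/(j(N-j))\le 2N/j$. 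Hence $\widehat G(j)\le C(N/j)^m$. Since $m\ge2$, the sum over $j>K$ is at most $CN^mK^{1-m}=CNK$.
\item For $j>\pi_0N$ we have $M_j\le C$ and $1/\beta_j\le CN/(N-j)$. This range contributes $O(N\log N)=o(NK)$.
\end{itemize}
Adding the three pieces gives $\max_s\varphi(s)\le CN^{2-1/m}$.

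\emph{Lower bound.} Take $s=\lceil\varepsilon K\rceil$ and keep only the terms with $j\le s$. By~\eqref{eq:Gleft} and Lemma~\ref{lem:degprofile}(iii),
\[
\varphi(s)\ \ge\ \Bigl(1-\frac{\Sigma_s}{\Sigma_N}\Bigr)\sum_{j\le s}\frac{\Lambda_j}{\delta_j}\ \ge\ \Bigl(1-\frac{\Sigma_s}{\Sigma_N}\Bigr)\,sN .
\]
Lemma~\ref{lem:degprofile}(i),(ii) give $\Sigma_s\le s$ and $\Sigma_N\ge c_0(K-1)$, so $\Sigma_s/\Sigma_N\le(\varepsilon K+1)/(c_0(K-1))\le\tfrac12$ once $\varepsilon\le c_0/4$ and $N$ is large. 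Therefore $\varphi(s)\ge\tfrac12\varepsilon KN=cN^{2-1/m}$, which is the lower bound at the stated $s$.

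\emph{Remaining claims and main obstacle.} The statement for $D_1(f)=1$ follows by duality. The failure of the $O(N\log N)$ bound follows because $2-1/m\ge3/2$, so $N^{2-1/m}$ is not $O(N\log N)$. The step that needs the most care is the middle range of the upper bound. There one must use the bound $M_j/\beta_j$ rather than $\Lambda_j/\delta_j$, which can be as large as $N$, and check that the constants in (iv) and in Lemma~\ref{lem:rates}(a) combine uniformly up to $\pi_0N$. The lower bound, by contrast, is direct, because $\gamma$ stays flat on $[0,K]$ by Lemma~\ref{lem:degprofile}(ii).
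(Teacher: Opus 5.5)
Your proposal is correct and follows the paper's proof essentially step for step. It uses the same three-range envelope bound via Lemma~\ref{lem:degprofile}(iii),(iv) and Lemma~\ref{lem:rates}(a), and the same lower bound at $s=\lceil\varepsilon K\rceil$ with $\varepsilon=c_0/4$ through \eqref{eq:Gleft}; the only harmless difference is that you keep the term $j=s$, where \eqref{eq:Gleft} still holds, obtaining $sN$ instead of the paper's $(s-1)N$.
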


\begin{proof}
Let $K=N^{(m-1)/m}$ and let $\pi_0$, $c_0$ be as in
Lemma~\ref{lem:degprofile}.  Absorbing the small $N$ as licensed in
Section~\ref{sec:binary}, we may assume $N\ge N_0(f)$ throughout.  By
hypothesis $D_0(f)=1$, so Lemma~\ref{lem:rates}(a) applies as well and gives
$1/\beta_j\le N^2/(j(N-j))$ for $1\le j\le N-1$.  We recall the two
Green's-function forms used below,~\eqref{eq:Gleft} and~\eqref{eq:Ghat}:
\[
G(s,j)=\Bigl(1-\tfrac{\Sigma_s}{\Sigma_N}\Bigr)\tfrac{\Lambda_j}{\delta_j}
\ \ (j\le s),
\qquad
G(s,j)\;\le\;\widehat G(j)\;\le\;
\min\bigl(\tfrac{\Lambda_j}{\delta_j},\ \tfrac{M_j}{\beta_j}\bigr) .
\]

\medskip\noindent\emph{Lower bound.}
Choose $\varepsilon=c_0/4$ and $s=\lceil\varepsilon K\rceil$, which satisfies
$1\le s\le\varepsilon K+1\le N-1$ for $N\ge N_0(f)$, since $K=o(N)$.  By Lemma~\ref{lem:degprofile}(i) and~(ii),
$\Sigma_s\le s\le\varepsilon K+1$ and $\Sigma_N\ge c_0(K-1)$, so
$\Sigma_s/\Sigma_N\le(\varepsilon K+1)/(c_0(K-1))$, which tends to
$\varepsilon/c_0=1/4$ as $K\to\infty$ and so is at most $1/2$ for
$N\ge N_0(f)$.  Therefore, by~\eqref{eq:Gleft} restricted to $j<s$ and
Lemma~\ref{lem:degprofile}(iii),
\[
\varphi(s)\;\ge\;\sum_{j<s}G(s,j)
=\Bigl(1-\frac{\Sigma_s}{\Sigma_N}\Bigr)\sum_{j<s}\frac{\Lambda_j}{\delta_j}
\;\ge\;\tfrac12(s-1)N\;\ge\;\tfrac12(\varepsilon K-1)N\;\ge\;c\,N^{2-1/m}.
\]

\medskip\noindent\emph{Upper bound.}
We bound $\sum_j\widehat G(j)$, which dominates $\varphi(s)$ for every $s$
by~\eqref{eq:Ghat}, over the three ranges of
Lemma~\ref{lem:degprofile}(iii) and~(iv).

For $j\le\lceil K\rceil$:~\eqref{eq:Ghat} and
Lemma~\ref{lem:degprofile}(iii) give $\widehat G(j)\le\Lambda_j/\delta_j\le CN$,
and there are at most $K+1$ such $j$, so
\[
\sum_{j\le\lceil K\rceil}\widehat G(j)\;\le\;C\,N(K+1)\;\le\;C'\,N^{2-1/m}.
\]

For $\lceil K\rceil<j\le\pi_0N$: Lemma~\ref{lem:degprofile}(iv) gives
$M_j\le C(N/j)^{m-1}$, while $j\le\pi_0N\le N/2$ and
Lemma~\ref{lem:rates}(a) give $1/\beta_j\le N^2/(j(N-j))\le2N/j$, so
by~\eqref{eq:Ghat}
\[
\widehat G(j)\;\le\;\frac{M_j}{\beta_j}\;\le\;C'\frac{N^m}{j^m},
\]
whence
\[
\sum_{\lceil K\rceil<j\le\pi_0N}\widehat G(j)\;\le\;C'N^m\int_{K}^{\infty}\frac{dx}{x^m}
\;=\;\frac{C'}{m-1}\,N^mK^{1-m}\;=\;\frac{C'}{m-1}\,N^{2-1/m}.
\]

For $j>\pi_0N$: Lemma~\ref{lem:degprofile}(iv) gives $M_j\le C$, and
Lemma~\ref{lem:rates}(a) gives
$1/\beta_j\le N^2/(j(N-j))\le CN/(N-j)$, so
$\sum_{j>\pi_0N}\widehat G(j)\le CN\,H_N=O(N\log N)$, which is
$o(N^{2-1/m})$ because $2-1/m\ge3/2$.

Adding the three ranges gives
$\max_s\varphi(s)\le\sum_j\widehat G(j)\le C\,N^{2-1/m}$.

\medskip\noindent
The case $D_1(f)=1$ follows by applying the above to $f^*$: by
Lemma~\ref{lem:duality}(ii) it has $D_0(f^*)=1$, and by
Lemma~\ref{lem:duality}(i) its chain is the reflection $i\mapsto N-i$ of that
of $f$, so that $\max_s\varphi_{f^*}(s)=\max_s\varphi_f(s)$.
Finally $N^{2-1/m}/(N\log N)\to\infty$, so the $O(N\log N)$ bound fails.
\end{proof}

\begin{remark}[Numerics for the degenerate case]\label{rem:degnum}
For $f=x_1\vee(x_2\wedge x_3)$, where $m=2$, the computed values are
$\max_s\varphi(s)/N^{3/2}=1.15,\,1.07,\,1.02,\,0.98,\,0.97$ for
$N=800$, $3200$, $12\,800$, $51\,200$ and $204\,800$, and $0.95$ at
$N=819\,200$; the
arguments of the maximum are $29,54,104,203,399,791$, that is,
$\asymp N^{1/2}$ as the theorem predicts.  The rules
$x_1\vee(x_2\wedge x_3\wedge x_4)$ and
$x_1\vee(x_2\wedge\dots\wedge x_5)$, with $m=3$ and $m=4$, match the
exponents $5/3$ and $7/4$ equally well.  Exhaustively over the $162$
monotone non-constant $f$ with $r=4$ that are not dictators, the $64$ with
$D_0=1$ or
$D_1=1$ all have $\max_s\varphi(s)/(N\log N)$ diverging (reaching $84$ at
$N=51\,200$), while the other $98$ stay in $[1.30,1.98]$.
\end{remark}

\begin{remark}[Where the Bernoulli Poincar\'e inequality enters]\label{rem:poincare-role}
Lemma~\ref{lem:poincare} is used for one purpose in the estimates: it proves the trichotomy
of Corollary~\ref{cor:sign}, that $h$ has at most one zero in $(0,1)$ and
that such a zero is repelling.  This is what makes $h$, and hence
$\log R=\log(\delta/\beta)$, of constant sign on each basin, which is exactly the
hypothesis under which Lemma~\ref{lem:scale} bounds
$\gamma$ and $\Sigma$.  The inequality says nothing about the behaviour at
the absorbing endpoints: that is the separate combinatorial content of
Lemmas~\ref{lem:endpoint} and~\ref{lem:rates}.  It is $D_0,D_1$, not the
fixed point, that decide between $\Theta(N\log N)$ and
$\Theta(N^{2-1/m})$.  The fixed point enters the estimates once more,
through the maximum of $\gamma$ shown in Figure~\ref{fig:gamma}:
that maximum is what decouples the two basins, through the factor
$\Sigma_s/\Sigma_L$ in the proof of Theorem~\ref{thm:bin-nondict}.
\end{remark}

\begin{example}[The smallest degenerate non-dictator]\label{ex:smallest}
For $r\le2$ every monotone non-constant $f$ is a dictator, $\mathrm{OR}_2$
or $\mathrm{AND}_2$, and the latter two have $(D_0,D_1)=(2,0)$ and $(0,2)$,
so all are dictators or non-degenerate.  At $r=3$ there are $15$ monotone
non-constant non-dictator functions, exactly $6$ of them degenerate, and,
up to permuting coordinates and dualising, all $6$ equal
$f(x)=x_1\vee(x_2\wedge x_3)$, which has $D_0=1$, $D_1=0$,
$\tilde f(x_2,x_3)=x_2\wedge x_3$ and $m=2$.  Theorem~\ref{thm:bin-deg}
gives $\max_{1\le s\le N-1}\E[T\mid S_0=s]=\Theta(N^{3/2})$, in agreement
with Remark~\ref{rem:degnum}.  This is the smallest arity at which the
$O(N\log N)$ bound of Theorem~\ref{thm:bin-nondict} fails for a
non-dictator.
\end{example}

\section{The dictator case}
\label{sec:dict}

Throughout this section $f(x)=x_j$ for a fixed
coordinate $j\in[r]$, so that $f$ is a dictator in the sense of
Section~\ref{sec:intro}.  A dictator is the only monotone non-constant
$f$ with $g=\mathrm{id}$: such an $f$ has
$\Var_p(f)=p(1-p)=p(1-p)g'(p)$ for every $p\in(0,1)$, so
Lemma~\ref{lem:poincare} holds with equality, forcing $f$ to be a
constant or a dictator.  Hence $h\equiv 0$ and the chain $S_t$ is a
martingale.  Both endpoint degrees
equal $1$, so a dictator is degenerate and
Theorem~\ref{thm:bin-nondict} does not apply to it.

\begin{theorem}[Dictator case]\label{thm:bin-dict}
Let $f(x)=x_j$.  Then for $S_0=s\in\{1,\dots,N-1\}$,
\[
\varphi(s)\;:=\;\E[T\mid S_0=s]\;=\;N(N-s)\bigl(H_{N-1}-H_{N-s-1}\bigr)
\;+\;Ns\bigl(H_{N-1}-H_{s}\bigr).
\]
\end{theorem}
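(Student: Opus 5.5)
We plan to specialise Proposition~\ref{prop:green} to the flat case recorded in Remark~\ref{rem:dictcheck}. For $f=x_j$ we have $g=\mathrm{id}$, so Proposition~\ref{prop:bd} gives $\beta_i=\delta_i=i(N-i)/N^2$. Hence every ratio $\delta_l/\beta_l$ equals $1$, $\gamma\equiv1$, $\Sigma_i=i$ and $\Sigma_N=N$. The Green's function then becomes
\[
G(s,j)\;=\;\frac{(s\wedge j)\bigl(N-(s\vee j)\bigr)}{N}\cdot\frac{N^2}{j(N-j)}
\;=\;\frac{N\,(s\wedge j)\bigl(N-(s\vee j)\bigr)}{j(N-j)} .
\]
We will then split $\varphi(s)=\sum_{j=1}^{N-1}G(s,j)$ into the ranges $j\le s$ and $j>s$.

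For $j\le s$ the summand is $N(N-s)/(N-j)$. Summing over $1\le j\le s$ and substituting $k=N-j$, which runs over $N-s\le k\le N-1$, gives $N(N-s)\sum_{k=N-s}^{N-1}1/k=N(N-s)(H_{N-1}-H_{N-s-1})$.

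For $j>s$ the summand is $Ns/j$. Summing over $s<j\le N-1$ gives $Ns(H_{N-1}-H_s)$.

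Adding the two ranges yields the stated formula. The only point needing care is where the diagonal term $j=s$ is assigned. Placing it in the first range is consistent with both branches of $G$, since they agree at $j=s$. The harmonic index boundaries then come out as $H_{N-s-1}$ and $H_s$ exactly, with the convention $H_0=0$ covering $s=N-1$.

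As an independent check, we can also verify the formula directly against the Poisson equation. Since $\beta_s=\delta_s$, that equation reads $\varphi(s+1)-2\varphi(s)+\varphi(s-1)=-N^2/(s(N-s))$ with $\varphi(0)=\varphi(N)=0$. Uniqueness for this system was already established by the maximum-principle argument in the proof of Proposition~\ref{prop:green}. The second difference of $Ns(H_{N-1}-H_s)$ equals $-N/s$, and that of $N(N-s)(H_{N-1}-H_{N-s-1})$ equals $-N/(N-s)$ by symmetry. Their sum is $-N^2/(s(N-s))$, as required. The boundary values also vanish at $s=0$ and $s=N$ once the formula is extended there.

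There is no real obstacle here. The main risk is an off-by-one error in the harmonic indices, and the check against the Poisson equation will catch it.
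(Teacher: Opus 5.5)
Your proof is correct and is exactly the paper's proof: with $\gamma\equiv1$ and $\Sigma_i=i$, Proposition~\ref{prop:green} reduces to $G(s,l)=N(N-s)/(N-l)$ for $l\le s$ and $G(s,l)=Ns/l$ for $l>s$, and summing these two harmonic tails gives the formula. One small caution about your optional Poisson check: plugging $s=N$ into the stated formula literally (reading $0\cdot(\cdots)$ as $0$) gives $\varphi(N)=-N$, not $0$. The boundary value vanishes only if you first rewrite the first term as $N(N-s)(H_{N-1}-H_{N-s-1})=N+N(N-s)(H_{N-1}-H_{N-s})$, and this is also the extension your second-difference identity needs at $s=N-1$.
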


\begin{proof}
Under $f(x)=x_j$ the update rule reads $X_V^{(t+1)}=X_{J_j}^{(t)}$, so
the chain is the asynchronous voter model on the complete graph $K_N$:
at each step a uniformly chosen agent copies the state of a uniformly
chosen agent.  In particular $g(\pi)=\pi$ for all $\pi\in[0,1]$, so by
Remark~\ref{rem:dictcheck} the birth-death description of
Section~\ref{sec:binary} has $\beta_i=\delta_i=i(N-i)/N^{2}$ for
$1\le i\le N-1$, with $0$ and $N$ absorbing, and a flat scale function
$\gamma_i\equiv 1$, $\Sigma_i=i$, $\Sigma_N=N$.  Substituting into
Proposition~\ref{prop:green}, and writing
$l$ for the summation index to keep it apart from the dictator
coordinate $j$,
\[
\varphi(s)=\sum_{l=1}^{N-1}G(s,l),\qquad
G(s,l)=\frac{\Sigma_{s\wedge l}\bigl(\Sigma_N-\Sigma_{s\vee l}\bigr)}{\Sigma_N}
\cdot\frac{1}{\beta_l\gamma_l}
=\frac{(s\wedge l)\bigl(N-(s\vee l)\bigr)}{N}\cdot\frac{N^{2}}{l(N-l)} .
\]
Equivalently, $\varphi$ solves the Poisson equation
\[
\varphi(s+1)+\varphi(s-1)-2\varphi(s)=-\frac{N^{2}}{s(N-s)},
\qquad \varphi(0)=\varphi(N)=0,
\]
whose Green's function for the discrete Laplacian on $\{1,\dots,N-1\}$
with Dirichlet boundary conditions is
$G_0(s,l)=(s\wedge l)(N-(s\vee l))/N$.

Splitting the sum at $l=s$ and simplifying each range,
\[
G(s,l)=\frac{N(N-s)}{N-l}\ \ (l\le s),
\qquad
G(s,l)=\frac{Ns}{l}\ \ (l>s),
\]
so
\begin{align*}
\varphi(s)
&=N(N-s)\sum_{l=1}^{s}\frac{1}{N-l}\;+\;Ns\sum_{l=s+1}^{N-1}\frac{1}{l}\\
&=N(N-s)\bigl(H_{N-1}-H_{N-s-1}\bigr)+Ns\bigl(H_{N-1}-H_{s}\bigr).
\qedhere
\end{align*}
\end{proof}

\begin{corollary}[Asymptotics]\label{cor:dict-asymp}
Let $p_0=s/N$.
\begin{enumerate}[label=(\alph*),itemsep=0pt]
\item $\varphi(s)/N^{2}=\Ent(p_0)+O(1/N)$ uniformly over $1\le s\le N-1$,
with implied constant $3$; in particular $\varphi(s)/N^2\to\Ent(p)$ for
fixed $p\in(0,1)$ and $s=\lfloor pN\rfloor$, and $\varphi(s)=\Theta(N^{2})$
uniformly over the $s$ with $p_0$ bounded away from $\{0,1\}$.
\item $\varphi(1)=NH_{N-1}=\Theta(N\log N)$, and symmetrically
$\varphi(N-1)=NH_{N-1}$.
\item More generally $\varphi(s)=\Theta\bigl(Ns\log(N/s)\bigr)$ in the
regime $1\le s=o(N)$.
\item There are absolute constants $0<c\le C<\infty$ such that
\[
c\,N^{2}\Ent(p_0)\;\le\;\varphi(s)\;\le\;C\,N^{2}\Ent(p_0)
\qquad\text{for every }N\ge2\text{ and every }1\le s\le N-1;
\]
one may take $c=1/6$ and $C=2$.
\end{enumerate}
\end{corollary}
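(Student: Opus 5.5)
All four parts will be read off the closed formula of Theorem~\ref{thm:bin-dict},
\[
\varphi(s)=N(N-s)\bigl(H_{N-1}-H_{N-s-1}\bigr)+Ns\bigl(H_{N-1}-H_s\bigr),
\]
by comparing the two harmonic differences with logarithms. The basic tool is the elementary estimate $H_b-H_a=\log(b/a)+\theta$, with $|\theta|\le 1/a$, valid for $1\le a\le b$. It follows from $\int_{a}^{b}dx/x$ and the monotonicity of $1/x$, since $\sum_{l=a+1}^{b}1/l$ lies between $\int_{a+1}^{b+1}$ and $\int_a^b$. The boundary case $s=N-1$ needs separate handling, because there $H_{N-s-1}=H_0=0$.

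For part~(b) I substitute $s=1$. The first term becomes $N(N-1)\cdot\frac1{N-1}=N$ and the second becomes $N(H_{N-1}-1)$, so $\varphi(1)=NH_{N-1}$. The symmetry $s\mapsto N-s$ of the formula, which is visible directly or follows from Lemma~\ref{lem:duality}(i) because the dictator is self-dual, then gives $\varphi(N-1)=NH_{N-1}$.

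For part~(a) I divide by $N^2$. The second term becomes $p_0\bigl(H_{N-1}-H_s\bigr)$, which is $p_0\log(N/s)$ up to an error of order $p_0(1/s+1/N)=O(1/N)$. The first term is symmetric and equals $(1-p_0)\log\frac{N}{N-s}+O(1/N)$ when $N-s-1\ge1$. When $s=N-1$ I compute it directly: it equals $N\cdot H_{N-1}/N^2\cdot N=H_{N-1}/N$, while $(1-p_0)\log\frac{N}{N-s}=\frac{\log N}{N}$, and the difference is $O(1/N)$. Since $p_0\log(1/p_0)+(1-p_0)\log(1/(1-p_0))=\Ent(p_0)$, this yields $\varphi(s)/N^2=\Ent(p_0)+O(1/N)$. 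To get the explicit constant $3$ I will track the errors term by term: each of the two pieces contributes at most $1/N$ from the $1/a$ remainder after multiplying by $p_0$ or $1-p_0$, and a further $1/N$ comes from replacing $H_{N-1}$ by $\log N$ terms. The remaining consequences of part~(a) are immediate.

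For part~(c) I take $s=o(N)$. The first term is $N(N-s)\sum_{l\le s}\frac1{N-l}\asymp Ns$, and the second is $Ns(\log(N/s)+O(1))$. Both are $\Theta(Ns\log(N/s))$ once $N/s$ is large, because $\log(N/s)\to\infty$.

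Part~(d) is the main obstacle, since it requires absolute constants uniformly in $N$ and $s$, including small $N$ and $s$ near the ends. I will use the symmetry to reduce to $s\le N/2$. On that range $\Ent(p_0)\asymp p_0\log(e/p_0)$, with $p_0\log(1/p_0)\le\Ent(p_0)\le p_0\log(e/p_0)$ because $-(1-p)\log(1-p)\le p$.

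\emph{Upper bound.} The first term is at most $N(N-s)\cdot\frac{s}{N-s}=Ns$. The second is at most $Ns\log(N/s)$, since $H_{N-1}-H_s\le\log\frac{N-1}{s}$. Together this gives $\varphi(s)\le N^2p_0\log(e/p_0)$. To bound this by $2N^2\Ent(p_0)$ I will use $p_0\le1/2$, which gives $\log(1/p_0)\ge\log 2$. Then $1+\log(1/p_0)\le 2\log(1/p_0)+1-\log 2\cdot 1$ would be needed, so I will instead combine the estimate with the term $-(1-p_0)\log(1-p_0)\ge p_0(1-p_0)$ inside $\Ent$. This gives $\Ent(p_0)\ge p_0\log(1/p_0)+p_0/2$, and hence $p_0(1+\log(1/p_0))\le 2\Ent(p_0)$.

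\emph{Lower bound.} The first term is at least $N(N-s)\cdot\frac{s}{N-1}\ge Ns\cdot\frac{N-s}{N}\ge Ns/2$. The second term is at least $Ns\log\frac{N}{s+1}$, using $H_{N-1}-H_s\ge\int_{s+1}^{N}dx/x$. I will show that their sum is at least $\frac16 N^2p_0\log(e/p_0)$ by splitting into two cases. When $s+1\le\sqrt{N s}$, we have $\log\frac{N}{s+1}\ge\frac12\log(N/s)$. Otherwise $N/s$ is bounded, at most about $4$ near $s$ small, so the $Ns/2$ term alone dominates $\frac16 Ns\log(eN/s)$.
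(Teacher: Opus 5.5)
Your proposal is correct in substance and follows the paper's route. Everything is read off the closed formula of Theorem~\ref{thm:bin-dict} by comparing harmonic differences with logarithms, and in (d) you reduce to $s\le N/2$ and bound the two terms separately.

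Your sandwich $p_0(1-p_0)\le -(1-p_0)\log(1-p_0)\le p_0$ is the paper's bound $ts\le\Xi\le\frac75 Ns$ on $\Xi=Nt\log\frac Nt$; your upper end is slightly sharper. Your split at $(s+1)^2\le Ns$ plays the role of the paper's split at $\log\frac Ns\ge 2\log 2$. In the complementary case $N/s<1+2/s+1/s^2\le 4$, so $\frac16\log(eN/s)<\frac12$ and the first term alone suffices. Both halves of (d) therefore go through with $c=1/6$ and $C=2$, and (b) and (c) are fine. In (c), note that it is the sum of the two terms, not each term, that is $\Theta(Ns\log(N/s))$. The false start in your upper bound for (d) should be deleted.

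The one step that fails as written is the accounting for the explicit constant $3$ in (a).
\begin{itemize}
\item Applying your estimate to $H_{N-1}-H_{N-s-1}$ means $a=N-s-1$. After multiplying by $1-p_0=(N-s)/N$, the remainder is $\frac{N-s}{N(N-s-1)}$. This equals $2/N$ at $s=N-2$, not at most $1/N$ as you claim.
\item Converting $\log\frac{N-1}{N-s-1}$ into $\log\frac{N}{N-s}$ costs a further $(1-p_0)\bigl(\log\frac{N-s}{N-s-1}-\log\frac{N}{N-1}\bigr)$, which is about $(2\log 2)/N$ at the same $s$.
\item Adding the errors in absolute value, as your plan does, gives a constant well above $3$ near $s=N-2$. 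To get back below $3$ you would have to track the signs of the errors, which your two-sided integral comparison does supply.
\end{itemize}

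The paper avoids all of this by using the comparison in the aligned form $0\le\sum_{i=a}^{b-1}\frac1i-\log\frac ba\le\frac1a$.
\begin{itemize}
\item With $(a,b)=(N-s,N)$, the first harmonic difference is compared directly with $\log\frac N{N-s}$, with error at most $\frac1{N-s}$. After the factor $1-p_0$ this contributes at most $1/N$. The special case $s=N-1$ also disappears, since $a=N-s\ge1$ always.
\item With $(a,b)=(s+1,N)$ and $\log(1+\frac1s)\le\frac1s$, the second term contributes at most $2/N$.
\end{itemize}
This gives the total of $3$.
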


\begin{proof}
For (a) we make one use of the comparison of a sum with an integral: for
integers $1\le a\le b$, the sum being empty when $a=b$,
\begin{equation}\label{eq:harmcomp}
0\;\le\;\sum_{i=a}^{b-1}\frac1i-\log\frac ba\;\le\;\frac1a,
\end{equation}
the left inequality because $1/i\ge\int_i^{i+1}dx/x$ and the right
because $\sum_{i=a+1}^{b-1}1/i\le\int_a^{b-1}dx/x\le\log(b/a)$.  Applying~\eqref{eq:harmcomp} with $(a,b)=(N-s,N)$
gives
$\bigl|H_{N-1}-H_{N-s-1}-\log\frac{N}{N-s}\bigr|\le\frac1{N-s}$, and
with $(a,b)=(s+1,N)$ it gives
$\bigl|H_{N-1}-H_{s}-\log\frac{N}{s+1}\bigr|\le\frac1{s+1}$, whence
$\bigl|H_{N-1}-H_{s}-\log\frac Ns\bigr|\le\frac1{s+1}+\log(1+\frac1s)
\le\frac2s$.  Multiplying the first estimate by $N(N-s)\le N^2$ and the
second by $Ns\le N^{2}$,
\[
\frac{\varphi(s)}{N^{2}}=(1-p_0)\log\frac{1}{1-p_0}+p_0\log\frac1{p_0}
+O(1/N)=\Ent(p_0)+O(1/N),
\]
with implied constant $3$, uniformly over $1\le s\le N-1$.
For (b), take $s=1$: the first term is
$N(N-1)(H_{N-1}-H_{N-2})=N(N-1)/(N-1)=N$ and the second is
$N(H_{N-1}-H_1)=NH_{N-1}-N$, so $\varphi(1)=NH_{N-1}$.  The value at
$s=N-1$ follows by the symmetry $\varphi(s)=\varphi(N-s)$, which is
visible in the formula.  For (c), the claim follows from the same two
expansions: for $1\le s=o(N)$ the first term is
$N(N-s)\bigl(s/N+O(s^{2}/N^{2})\bigr)
=\Theta(Ns)$ and the second is $Ns\log(N/s)\bigl(1+o(1)\bigr)$, which
dominates.

For (d), both $\varphi$ and $N^{2}\Ent(\cdot/N)$ are invariant under
$s\mapsto N-s$, so we may assume $s\le N/2$ and put $t=N-s\ge N/2$.
Write $\varphi(s)=A_1+A_2$ and $N^{2}\Ent(p_0)=Ns\log\frac Ns+\Xi$, where
\[
A_1=Nt\sum_{i=t}^{N-1}\frac1i,\qquad
A_2=Ns\sum_{i=s+1}^{N-1}\frac1i,\qquad
\Xi=Nt\log\frac Nt .
\]
The sum in $A_1$ has $s$ terms, each between $1/(N-1)$ and $1/t$, so
\begin{equation}\label{eq:dictA}
\tfrac12Ns\;\le\;ts\;\le\;A_1\;\le\;Ns .
\end{equation}
Comparing the sum in $A_2$ with $\int_s^{N-1}dx/x$ from above and with
$\int_{s+1}^{N}dx/x$ from below, and using $s+1\le 2s$,
\begin{equation}\label{eq:dictB}
Ns\Bigl(\log\frac Ns-\log2\Bigr)\;\le\;A_2\;\le\;Ns\log\frac Ns ,
\end{equation}
both sides being non-negative because $s\le N/2$.  Finally
$\log\frac Nt=-\log(1-\frac sN)$ and $x\mapsto-\log(1-x)/x$ increases on
$(0,1)$, so for $x=s/N\in(0,\tfrac12]$ we get
$\frac sN\le\log\frac Nt\le\frac{2s\log 2}{N}$ and hence,
with~\eqref{eq:dictA},
\begin{equation}\label{eq:dictTheta}
\tfrac12Ns\;\le\;ts\;\le\;\Xi\;\le\;2ts\log2\;\le\;\tfrac75Ns .
\end{equation}
For the upper bound,~\eqref{eq:dictA},~\eqref{eq:dictB} and
$Ns\le2\Xi$ give
$\varphi(s)\le Ns+Ns\log\frac Ns\le2\Xi+Ns\log\frac Ns
\le2N^{2}\Ent(p_0)$.
For the lower bound,~\eqref{eq:dictTheta} and~\eqref{eq:dictA} give
$\Xi\le\frac75Ns\le\frac{14}5A_1\le\frac{14}5\varphi(s)$.  If
$\log\frac Ns\ge2\log2$ then~\eqref{eq:dictB} gives
$A_2\ge\frac12Ns\log\frac Ns$, so $Ns\log\frac Ns\le2\varphi(s)$; if
$\log\frac Ns<2\log2$ then $Ns\log\frac Ns\le\frac75Ns\le
\frac{14}5\varphi(s)$ by~\eqref{eq:dictA}.  In both cases
$N^{2}\Ent(p_0)=Ns\log\frac Ns+\Xi\le\frac{28}5\varphi(s)
\le6\varphi(s)$.
\end{proof}

\begin{remark}[The separation is not uniform in the initial state]
\label{rem:dict-not-uniform}
Corollary~\ref{cor:dict-asymp}(b) gives $\varphi(1)=\Theta(N\log N)$,
which is the same order as the upper bound $O(N\log N)$ of
Theorem~\ref{thm:bin-nondict} for non-degenerate $f$.  That order is
attained there: if $D_0(f)\ge 2$, so that the only attracting endpoint
is $1$, then $d_f(1)=N-1$ and the non-degenerate expected consensus
time from $s=1$ is $\Theta(N\log N)$ as well.  The dictator branch and
the non-degenerate branch therefore do not separate at every initial
state.  The gap between them is $\Theta(N/\log N)$ when $p_0$ is
bounded away from $\{0,1\}$, where the dictator time is $\Theta(N^{2})$
by Corollary~\ref{cor:dict-asymp}(a) and the non-degenerate time is
$\Theta(N\log N)$.  Any statement of the two branches as an
unconditional dichotomy, with constants independent of the initial
state, is false.
\end{remark}

\begin{remark}[The bulk asymptotic]\label{rem:dict-bulk}
The bulk asymptotic of $\varphi$ is $N^{2}\Ent(p_0)$ and not
$N^{2}p_0(1-p_0)$.  The two agree up to constants for $p_0$ bounded away
from $\{0,1\}$, and differ by a logarithmic factor in the boundary
regime $p_0=\Theta(1/N)$, where $N^{2}\Ent(1/N)=\Theta(N\log N)$ while
$N^{2}p_0(1-p_0)=\Theta(N)$.  Theorem~\ref{thm:bin-dict} has been
verified in exact rational arithmetic against the linear system for all
$N\le 14$ and all $s\in\{1,\dots,N-1\}$.  The ratio
$\varphi(s)/\bigl(N^{2}\Ent(s/N)\bigr)$ of
Corollary~\ref{cor:dict-asymp}(d) is computed to lie in $[0.72,1)$ over
all $2\le N\le 3000$ and all $1\le s\le N-1$, its minimum being attained
at $N=2$.  For the parallel result for the continuous-time voter model
on $K_N$ see \cite[Ch.~14]{AldousFill}.
\end{remark}

\end{document}